\def \R {{\mathbb R}}
\def \N {{\mathbb N}}
\def \Z {{\mathbb Z}}
\def \C {{\mathbb C}}
\def \gx {\gamma(\xi)}
\def \vertspace {\rule{0pt}{2.5ex}}
\def \bx {\bar\xi}
\def \eqdef {\doteq}
\def \vertspace {\rule{0pt}{2.5ex}}
\newcommand{\lsc}[1]{(LSC)_{#1}} 
\newcommand{\tnt}{\textit{(TnT)}\ } 
\newcommand {\beqn} {\begin{equation*}}
	\newcommand {\eeqn}	{\end{equation*}}
\newcommand {\beq} {\begin{equation}}
	\newcommand {\eeq}	{\end{equation}}
\newtheorem{theorem}{Theorem}[section]
\newtheorem{lemma}[theorem]{Lemma}
\newtheorem{example}[theorem]{Example}
\newtheorem{definition}[theorem]{Definition}
\newtheorem{proposition}[theorem]{Proposition}
\newtheorem{remark}[theorem]{Remark}
\newtheorem{remarks}[theorem]{Remarks}
\newtheorem{corollary}[theorem]{Corollary}
\numberwithin{equation}{section}
\title{Chaotic dynamics in refraction galactic billiards}
\author{Vivina L. Barutello, Irene De Blasi, Susanna Terracini}
\address{University of Turin, Department of Mathematics, via Carlo Alberto 10, Turin, Italy}
\email{vivina.barutello@unito.it}
\email{irene.deblasi@unito.it}
\email{susanna.terracini@unito.it}
\date{\today} 
\thanks{Work partially supported by INdAM group G.N.A.M.P.A.}
\keywords{Billiards, refraction, Kepler problem, symbolic dynamics, heteroclinic connections, topological chaos.}
\subjclass[2020] {
	34C28, 
	37B10, 
	70F15, 
	37C83, 
	70K44. 
}
\begin{document}
	
\maketitle
\begin{abstract}
	We prove the presence of topological chaos at high internal energies for a new class of mechanical refraction  billiards coming from Celestial Mechanics. Given a smooth closed domain $D\in \R^2$, a central mass generates a Keplerian potential in it, while, in $\R^2\setminus \overline D$, a harmonic oscillator-type potential acts. At the interface, Snell's law of refraction holds. 
	The chaoticity result is obtained by imposing progressive assumptions on the domain, arriving to geometric conditions which holds generically in $\mathcal C^1$. The workflow starts with the existence of a symbolic dynamics and ends with the proof of topological chaos, passing through the analytic non-integrability and the presence of multiple heteroclinic connections between different equilibrium saddle points.
	This work can be considered as the final step of the investigation carried on in \cite{deblasiterraciniellissi}  and \cite{IreneSusNew}.
\end{abstract}

\section{Introduction}\label{sec: prel}
While studying dynamical systems coming from Celestial Mechanics, it is not uncommon to come across examples of chaotic models; nevertheless,   rigorously proving the chaotic nature  of a physical system is often problematic and it is, for example, the subject of the recent papers  \cite{BalGilGua2022,GuaParSeaVid2021,BarCanTer2,GuaMarSea2016,ST2012,KnTai,BolNeg,Bol,DevProgMath1981}. In this paper we propose the proof of the chaoticity of a model describing a class of mechanical refraction  billiards, which can be thought as general models for the motion of a particle subject to a discontinuous potential. Their physical interest cover many different situations (see for instance \cite{Genov2009687,Krishnamoorthy2012205}). In particular, the dynamical system considered in the present paper is of interest in Celestial Mechanics and involves two forces acting in two complementary regions of the plane: a Keplerian  center of gravitation sits inside a bounded region $D$, while a harmonic oscillator is acting in the complementary set of $D$.  This choice of the potentials appears in the literature (see \cite{Delis20152448, deblasiterraciniellissi, IreneSusNew}) to mimic the motion of a particle in an elliptic galaxy with a Black Hole in its centre; in particular in \cite{Delis20152448}, the $3$-dimensional model is considered and its chaoticity is inferred using a mixed numerical and analytical approach.
At the interface of the two regions, which we assume to be a smooth curve, a generalized Snell's refraction law holds and trajectories concatenate inner arcs (Keplerian hyperbol\ae) with outer ones (harmonic arcs) being deflected in the transition through the boundary of $D$. This indeed corresponds to some (possibly ill-defined) area-preserving map in the cylinder. While reflection billiards have been extensively investigated with and without internal potentials (as general reference we quote the monographs \cite{KozTrebook,Tabbook} and the paper \cite{takeuchi2021conformal}), at the best of our knowledge, refraction ones seem to be a new subject in the literature. 

{The main result of this paper, namely, the chaoticity of the above-said model, will be reached proceeding step by step. We start by constructing a symbolic dynamics under natural assumptions on the geometry of the boundary $\partial D$ and for high inner energies, hence we proceed with some theorems on the analytical non-integrability and the existence of multiple heteroclinic connections. Our results take advantage of the fact that a Keplerian centre acts as a scatterer at high energies (see e.g. \cite{bolotin2000periodic,Bol2017}) and complement the almost-integrability of the model proved in \cite{IreneSusNew}.}\\
The interested reader can compare our result with the well established theory of integrability of the gravitational $n$-centre problem (\cite{Bol,BolNeg,BolNeg2, BoKo, KnTai, KK1992}), while more on integrability at high energies of the $n$-center problem can be found in \cite{Kn2002,KnTai2}.

\subsection*{Analytical description of the model}\label{sec:descr_model}
Let us consider an open bounded domain $D\subset\R^2$ with a smooth boundary. The different results presented in the current paper will be obtained imposing different assumptions on $\partial D$; let us start by assuming that  $D$ contains the origin, and consider the discontinuous potential
\begin{equation*}\label{eq:potintro}
	V(z) = 
	\begin{cases} \displaystyle
		V_E(z) = \mathcal{E} -\frac{\omega^2}{2}\|z\|^2, \quad & \text{if } z \notin D, \\ &\vspace{-10pt} \\ \displaystyle
		V_I(z) = \mathcal{E}+h +\frac{\mu}{\|z\|}, \quad & \text{if } z \in D,
	\end{cases}
\end{equation*}
where $\omega^2$, $\mu$ and $\mathcal{E}$ are fixed and strictly positive, while $h$ will be chosen progressively large enough to ensure our results. We also implicitly assume that $D$ is contained in the Hill's region of the outer potential $V_E$, that is, the open ball centered at the origin and with radius $\sqrt{2\mathcal{E}}\omega^{-1}$.

The $0$-energy trajectories moving under the force induced by $V$ are concatenations of arcs in $D$, which are Keplerian hyperbol\ae\,, and arcs outside $D$, given by segments of harmonic ellipses; for the sake of clarity, we will always suppose our concatenation starting with an outer arc. The potential $V_I$ can be extended by continuity on $\partial D$, hence it makes sense to rule the transition between {\em outer} and {\em inner arcs} by means of Snell's law 
\begin{equation}\label{eq:SnellIntro}
	\sqrt{V_E(z)}\sin\alpha_E=\sqrt{V_I(z)}\sin\alpha_I, 
\end{equation} 
where $z\in \partial D$ is the transition point and  $\alpha_E$, $\alpha_I$ are the angles that the two arcs form with the normal unit vector to $\partial D$ in $z$ (see Figure \ref{fig:snell}). As we will state more precisely in Appendix \ref{sec:appB}, Eq. \eqref{eq:SnellIntro} has a variational interpretation based on a critical point argument.

\begin{figure}
	\centering
	\begin{overpic}[width=0.6\linewidth]{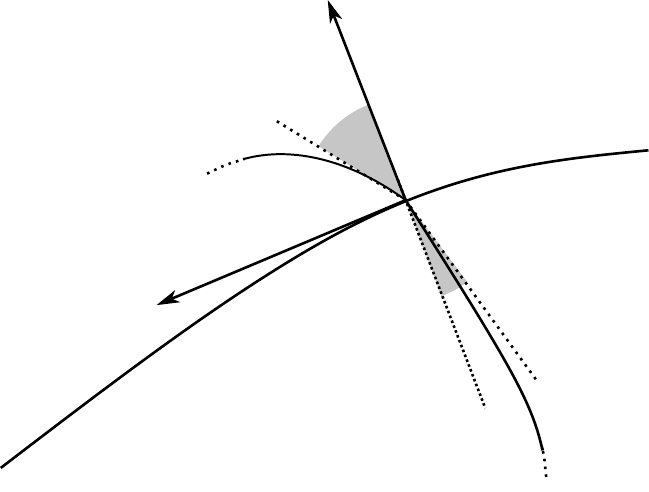}
		\put (52,53) {$\alpha_E$}
		\put (73.5,18) {$\alpha_I$}
		\put (54,69) {\rotatebox{20}{$n$}}
		\put (40,37) {\rotatebox{20}{$t$}}
		\put (90,53) {$\partial D$}
		\put (63,47) {{$z$}}
	\end{overpic}
	\caption{Snell's law between inner and outer arcs. Here, $z$ is the transition point and $t$ and  $n$ are respectively the tangent and outward-pointing normal unit vectors to $\partial D$ in $z$.}
	\label{fig:snell}
\end{figure}

Note that the stated Snell's law depends on the point $z$, and, as, for any $z\in \partial D$, $V_I(z) > V_E(z)$,  the transition from the outside to the inside is always well defined;
to pass from inside to outside we need instead to ask that the inner arc arriving at $z$ is {\em transversal enough}, namely,
\begin{equation}\label{eq:angolocritico}
	|\alpha_I| \leq \alpha_{crit} \eqdef \arcsin\left( \frac{\sqrt{V_E(z)}}{\sqrt{V_I(z)}} \right).
\end{equation}
The definition of a complete dynamics can actually be compromised from this fact. 

Special trajectories for the complete dynamics are  \emph{homothetic arcs}:   since both potentials are radially symmetric and, by Snell's law \eqref{eq:SnellIntro}, $\alpha_I =0$ if and only if $\alpha_E=0$, every radial direction hitting the boundary orthogonally is in fact preserved (recall Figure \ref{fig:figura intro2}, left). The construction of inner homothetic arcs is based on the classical Levi-Civita regularization method (see \cite{Levi-Civita} and the recent paper \cite{deblasiterraciniellissi}): the solution is in this case understood in a regularized sense; when a collision occurs it is reflected back following the same direction.
\begin{figure}
	\centering
	\begin{overpic}[width=.7\linewidth]{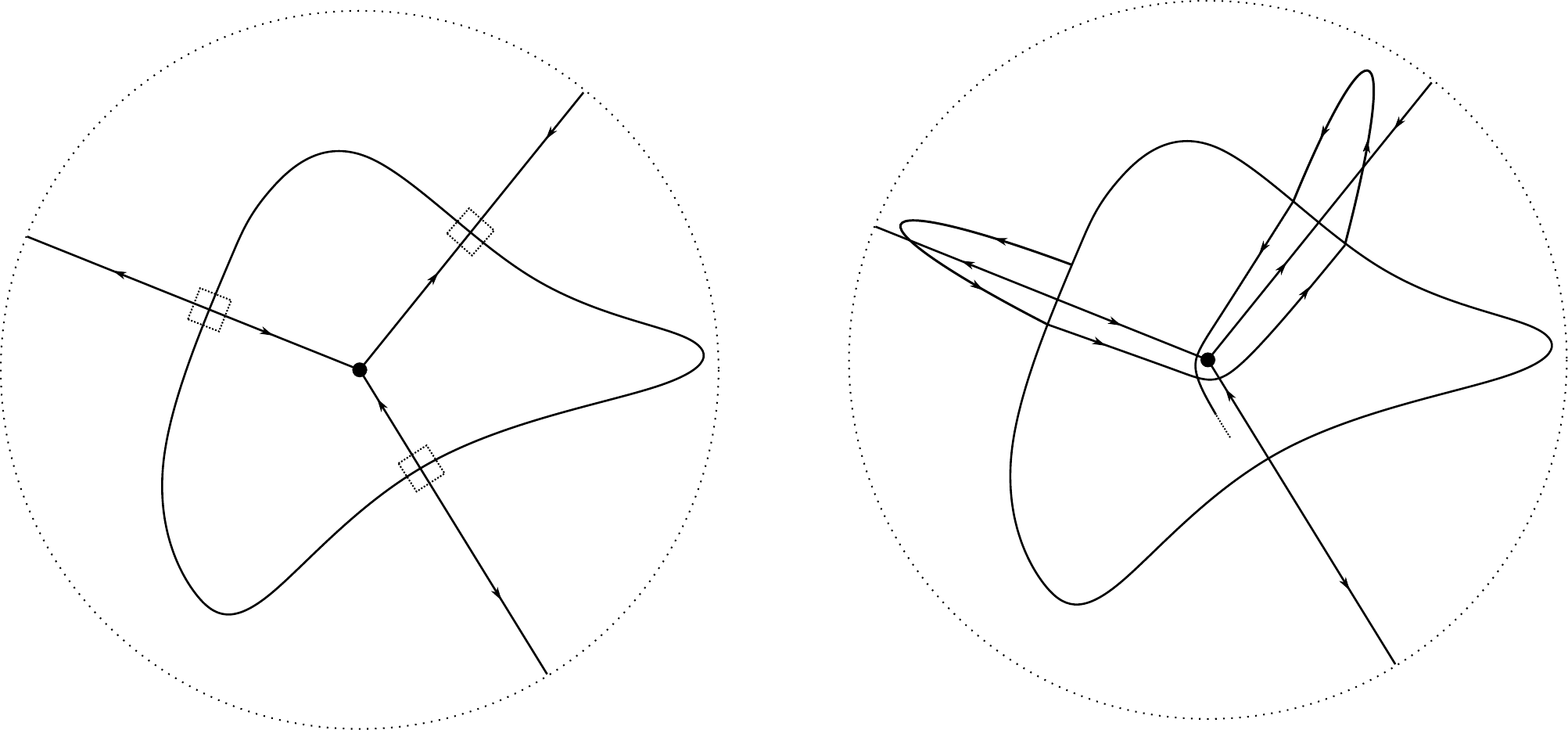}
	\end{overpic}
	\caption{Left: homothetic trajectories for the complete dynamics; the dashed circle denotes the boundary of the Hill's region for the outer potential. Right: trajectories for the complete dynamics near homothetic arcs.}
	\label{fig:figura intro2}
\end{figure}

Our aim is then to construct  outer and inner arcs connecting points in suitable regions of $\partial D$; in particular in Section \ref{sec:adm_dom_cc} such regions will be understood as neighbourhoods of points on these straight trajectories (see also Figure \ref{fig:figura intro2}, right).
The procedures to obtain such arcs are different:
\begin{itemize}
	\item outer arcs will connect points close to each others and will be  obtained via perturbative methods;
	\item inner ones will act as {\em transfer orbits} between possibly disjoint regions of $\partial D$, and can be obtained by purely geometric arguments. 
\end{itemize}

\subsection*{Statement of the results}\label{sec:statements}

{The proof of the chaoticity of our system is developed 
through intermediate results, first of them the presence of a symbolic dynamics. This is guaranteed under some dynamical and geometric conditions on the boundary of the billiard's domain $D$.} Here we give an heuristic description of these assumptions, postponing to Definitions \ref{def:prop_arco_est}, \ref{def:prop_arco_int},  and \ref{def:change_sign_prop} the rigorous explanation. The \emph{outer-arc} and \emph{inner-arc properties} regard the existence of a unique outer or inner arc connecting points in suitable subsets of $\partial D$, where we stress that the uniqueness of the inner arc is understood in a selected topological class. 
{Furthermore, the outer-arc property involves a single segment of the boundary, while, since inner arcs acts indeed as \emph{transition arcs} between different part of $\partial D$, the inner-arc property describes a union of segments of the boundary.
The \emph{change sign property} is connected to the change in sign of a partial derivative of a suitable Jacobi distance (see Appendix \ref{sec:appB}). At high energies, this property may be related  with the sign-change of the derivative of the Euclidean norm function restricted to $\partial D$.}
\begin{definition}\label{def:D_admiss_intro}
	We say that the domain $D$ is admissible if $\partial D=\gamma\left([0,L]\right)$, $\gamma\in C^1$, and there exists a finite union of disjoint intervals $\mathcal A=\cup_{i=1}^m(a_i, b_i)\subset [0,L]$, $m\geq2$, satisfying the inner-arc property (Def. \ref{def:prop_arco_int}), the change-sign property (Def. \ref{def:change_sign_prop}) and such that for every $i=i, \ldots, m$ each interval $(a_i, b_i)$ satisfies the outer-arc property (Def. \ref{def:prop_arco_est}).
\end{definition}

Now, assuming the admissibility of $D$, we can start with the construction of our symbolic dynamics. First of all we need an {\em alphabet}, given in our case by the set $\mathcal I \eqdef \{1,\ldots,m\}$ labeling the connected components of $\mathcal{A}$.
The corresponding words will be composed as bi-infinite sequences of symbols in $\mathcal I$, with a suitable grammar here specified.
\begin{definition}\label{def:adm_word_setL}
	We define the set of \emph{admissible words} for our symbolic dynamics as 
	\[
	\mathcal{L} \eqdef 
	\left\lbrace 
	\underline{\ell}=(\ell_i)_i \in \mathcal{I}^{\mathbb{Z}}\;\middle\lvert\ 
	\begin{aligned} 
		& \text{for every  $i \in \mathbb{Z}$, the symbols $\ell_i$ and $\ell_{i+1}$ do not }\\ 
		& \text{correspond to antipodal intervals}
	\end{aligned}
	\right\rbrace.
	\]
\end{definition}
In the previous definition, \emph{two intervals are antipodal} if there exists a point in the first interval and another point in the second one which are antipodally directed (see Definition \ref{def:param_antip}). The non-antipodality condition is essential for the uniqueness of the inner arc and it is not just a technical requirement. 

\begin{example}\label{ex:esempio} Let the intervals $(a_i, b_i)$ be neighbourhoods of some connected components $[\alpha_i,\beta_i]$ of the critical set of the distance function to the Keplerian center $\|\cdot\|_{|_{\partial D}}$. Let us assume that  the half-line connecting the origin to $\gamma(t)$ intersects $\partial D$ only at $\gamma(t)$ for every $t\in \cup_{i=1}^m(a_i, b_i)$. Then, the domain is admissible for large internal energies provided each interval is not self antipodal and the critical intervals are topologically stable (i.e. the derivative of the distance function changes sign close to their extremals). \end{example}

The correspondence between words and trajectories of our dynamics is given by the following definition.

\begin{definition}\label{def:realize}
	Assuming $D$ is admissible, we say that a trajectory realizes a word $\underline\ell\in \mathcal{L} $ if it visits the intervals $(a_i,b_i)$, in the order imposed by $\underline\ell$ (see Figure \ref{fig:dinsymb}). This means that there are two consecutive crossings of $\partial D$ in each $\gamma\left((a_i,b_i)\right)$.
\end{definition}
\begin{figure}
	\centering
	\begin{overpic}[width=.5\linewidth]{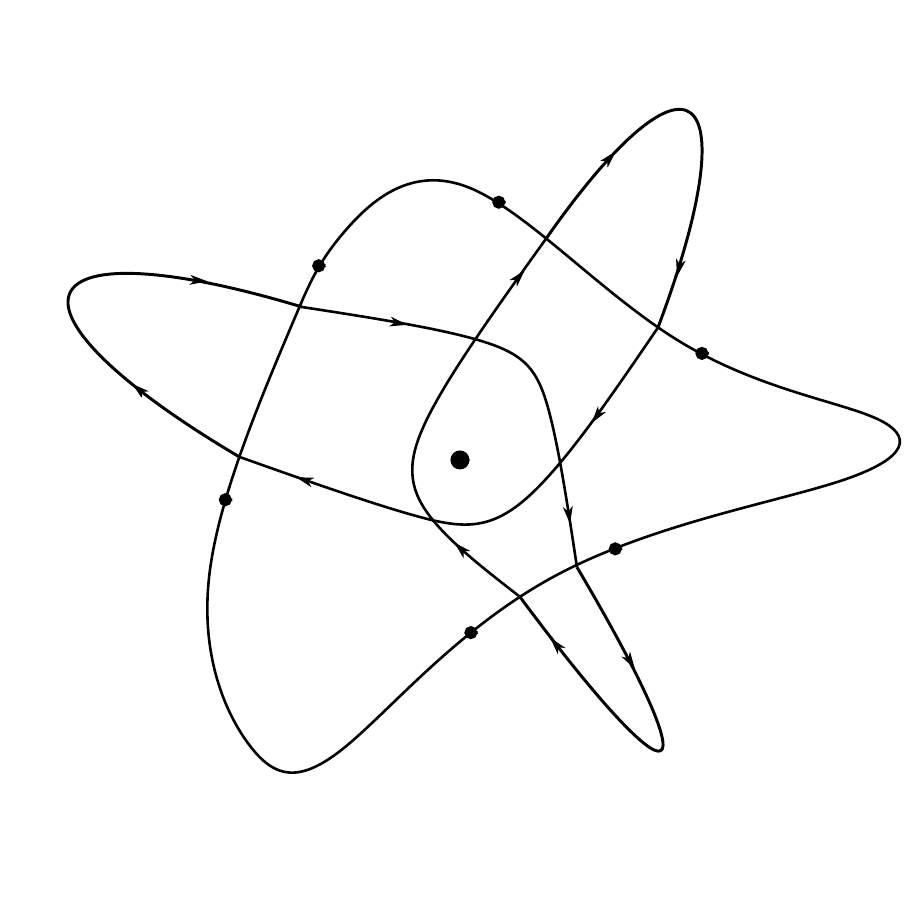}
		\put (16,40) {\rotatebox{73}{$\gamma(b_1)$}}
		\put (74,66) {\rotatebox{-30}{$\gamma(a_2)$}}
		\put (64,34) {\rotatebox{20}{$\gamma(b_3)$}}
		\put (27,69) {\rotatebox{62}{$\gamma(a_1)$}}
		\put (50,85) {\rotatebox{-35}{$\gamma(b_2)$}}
		\put (47,22) {\rotatebox{40}{$\gamma(a_3)$}}
	\end{overpic}
	\caption{Example of orbit which realizes the periodic word $\underline \ell=(\ldots, 1,3,2,1,3,2,\ldots)$. The orbit visits the three segments $\gamma\left((a_i,b_i)\right)$, $i=1,2,3$, in the order prescribed by $\underline\ell$.}
	\label{fig:dinsymb}
\end{figure}

Now we are in a position to state our first result.

\begin{theorem}\label{thm:ex_dyn_sym_intro}
	Let $D$ be an admissible domain. Then for any sufficiently large $h$ there exists a subset $X$ of the initial conditions-set, a first return map $\mathcal F$ and a continuous surjective map $\pi: X\to \mathcal L$ such that the diagram 
	\[
	\begin{tikzcd}
		X \arrow{r}{\mathcal{F}} \arrow{d}{\pi} & X \arrow{d}{\pi} \\
		\mathcal{L} \arrow{r}{\sigma_r}	& \mathcal{L}
		\end{tikzcd}
	\]
	commutes, where $\sigma_r$ is the Bernoulli right-shift. In other words, for large enough $h$, our refraction billiard model admits a symbolic dynamics.
\end{theorem}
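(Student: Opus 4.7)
The plan is to set up a Poincaré-section encoding of the trajectories based on their successive visits to the prescribed arcs $\gamma((a_i,b_i))$, and to show that every admissible word in $\mathcal L$ is realized by at least one such trajectory, with $\pi$ continuous. I would take the section $\Sigma$ to consist of inward-pointing unit tangent vectors along $\partial D$ and let $\mathcal F:\Sigma\to\Sigma$ be the first return obtained by following one inner arc and then the next outer arc back to $\partial D$. The set $X$ will be the subset of $\Sigma$ whose full $\mathcal F$-orbit successively visits the sets $\gamma((a_{\ell_i},b_{\ell_i}))$ for some $\underline\ell\in\mathcal L$, and $\pi$ is the itinerary map, which by construction automatically satisfies $\pi\circ\mathcal F=\sigma_r\circ\pi$.

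The core of the proof is therefore the construction, for every $\underline\ell\in\mathcal L$, of a trajectory realizing it. I would first handle finite truncations $\ell_{-N},\dots,\ell_N$: the inner-arc property (Def.~\ref{def:prop_arco_int}) provides, for any choice of endpoints in consecutive $\gamma((a_{\ell_i},b_{\ell_i}))$ and $\gamma((a_{\ell_{i+1}},b_{\ell_{i+1}}))$ (whose non-antipodality is guaranteed by $\underline\ell\in\mathcal L$), a unique inner arc in the selected topological class, while the outer-arc property (Def.~\ref{def:prop_arco_est}) provides the unique outer arc returning inside each $\gamma((a_{\ell_i},b_{\ell_i}))$. The concatenation forms a true orbit precisely when the arcs match at each transition point, which amounts to a fixed-point problem on the finite-dimensional configuration space $\prod_{i=-N}^{N}(a_{\ell_i},b_{\ell_i})$. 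I would rephrase this equivalently as a critical point problem for the truncated Jacobi action of Appendix~\ref{sec:appB}: critical points automatically satisfy Snell's law~\eqref{eq:SnellIntro}, and the change-sign property (Def.~\ref{def:change_sign_prop}) supplies the sign-change on the boundary of $\prod(a_{\ell_i},b_{\ell_i})$ that forces a min-max critical value to exist in the interior.

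Two features of the high-energy regime $h\gg 1$ are decisive in closing the finite argument. First, the critical angle $\alpha_{crit}$ in~\eqref{eq:angolocritico} tends to $\pi/2$, so every inner arc that we build can in fact re-emerge through $\partial D$; second, Keplerian branches at high energy behave as near-straight scattering trajectories, producing quantitative contraction estimates that are uniform in the endpoints and in $\underline\ell$, which is exactly what is needed to close the fixed-point/min-max argument. A diagonal/compactness extraction then upgrades the finite truncations to bi-infinite $\underline\ell$: the trajectories realizing $(\ell_{-N},\dots,\ell_N)$ admit an Ascoli-type subsequential limit realizing $\underline\ell$, giving surjectivity of $\pi$. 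Continuity of $\pi$ follows from the continuous dependence of inner and outer arcs on their endpoints, already established in the arc properties.

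The main obstacle I anticipate is obtaining the uniform-in-$\underline\ell$ nondegeneracy needed to pass to the limit $N\to\infty$: the transversality at each crossing of $\partial D$ must stay bounded away from $\alpha_{crit}$ along the entire trajectory, uniformly over the combinatorial data of $\underline\ell$. All of this is ultimately extracted from the high-energy scattering estimates for the Kepler flow together with the quantitative form of the outer-arc property, and is what forces the requirement that $h$ be chosen sufficiently large.
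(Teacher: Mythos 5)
Your overall architecture coincides with the paper's: a boundary section with a first-return map, an itinerary map $\pi$ that intertwines with the shift by construction, realization of finite words via a topological critical-point argument for the total Jacobi length on the product of the compact intervals (the paper uses Poincar\'e--Miranda, Theorem \ref{thm: miranda}, with the change-sign property supplying exactly the sign conditions on opposite faces), a diagonal extraction to pass to bi-infinite words, and continuity from continuous dependence on endpoints. That is precisely the chain Theorem \ref{thm:SymDyn} $\to$ Proposition \ref{prop:pi_suriettiva} $\to$ Proposition \ref{prop:pi_continua}.

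However, one of the two features you call ``decisive'' is wrong as stated. Since $V_I(z)=\mathcal E+h+\mu/\|z\|\to\infty$ while $V_E$ stays bounded on $\partial D$, the critical angle $\alpha_{crit}=\arcsin\bigl(\sqrt{V_E(z)}/\sqrt{V_I(z)}\bigr)$ in \eqref{eq:angolocritico} tends to $0$ as $h\to\infty$, not to $\pi/2$: the inside-to-outside refraction becomes \emph{more} restrictive at high energy, so this mechanism cannot be what lets the inner arcs re-emerge. The correct resolution, which you half-invoke but should make the actual argument, is variational: at a critical point of the total Jacobi length the identity $\partial_b S_I+\partial_a S_E=0$ together with Lemma \ref{lem:derivateS} forces the tangential component of the inner velocity to equal that of the outer one, hence to be bounded by $\sqrt{2V_E}<\sqrt{2V_I}$, which is exactly the admissibility condition $|\alpha_I|\le\alpha_{crit}$; transversality is thus automatic \emph{a posteriori} at critical points and needs no opening of the critical angle. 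Relatedly, the ``quantitative contraction estimates uniform in $\underline\ell$'' you anticipate as the main obstacle are not needed for this theorem: Poincar\'e--Miranda requires only the sign change, the change-sign property holds with a single threshold $h_1$ uniformly over the finitely many compact intervals $[\alpha_i,\beta_i]$ (so there is no dependence on the length or combinatorics of the word), and the passage to $N\to\infty$ uses only pointwise convergence of the transition parameters plus continuous dependence of the arcs. Contraction/monotonicity estimates enter only later, for the injectivity of $\pi$ in Theorem \ref{thm:final}, which is not part of the present statement.
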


\begin{remark}[Example \ref{ex:esempio} continued] In the setting described there, let us assume furthermore that there are at least two distinct intervals which are nor antipodal: then there are infinitely many distinct admissible words, giving rise to a nontrivial symbolic dynamics at large internal energies. The proof is an easy modification of that of Theorem \ref{thm:sym_dyn_cc}, where the connected components $[\alpha_i,\beta_i]$ are reduced to singletons. \end{remark}

To prove Theorem \ref{thm:ex_dyn_sym_intro}  we shall use a broken geodesic method, reminiscent of the one used in \cite{ST2012,BarCanTer2};
the key part of the proof is the surjectivity of the map $\pi$; in practice, this means that for every $\ell \in \mathcal L$ we need to find an initial condition in $X$ that generates a trajectory realizing the world $\ell$. This result is achieved by means of  a {\em shadowing lemma} which consists in searching critical points of a suitable length functional. Such critical points are obtained applying a fixed point theorem, known as Poincar\'e-Miranda Theorem (see \cite{Miranda}) and here the change-sign property is crucial.
In particular this techniques can be used to prove the existence of closed trajectories realizing any periodic word in $\mathcal L$.\\
The possible injectivity of $\pi$, necessary to prove the actual chaoticity of the model, may be obstructed by the lack of uniqueness of the searched critical point: we will return on this problem later.
\\
Under some restrictions on the words in $\mathcal{L}$,  we can prove that the corresponding symbolic dynamics is collision-free. In order to do that we define the set of bi-infinite symmetric words ${\mathcal L}_s \subset {\mathcal L}$ admitting a symmetry axis and state the following corollary.
\begin{corollary}\label{coro:no_coll_intro}
	Replacing ${\mathcal L}$ with $\tilde{\mathcal L} \eqdef {\mathcal L}\setminus {\mathcal L}_s$ in the diagram of Theorem \ref{thm:ex_dyn_sym_intro},
	we obtain that the symbolic dynamics is not collisional, in the sense that any trajectory corresponding to a word $\underline\ell\in\tilde{\mathcal L}$ does not have any collisional inner arc. 
\end{corollary}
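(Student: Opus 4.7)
The plan is to show that any orbit realizing a word $\underline\ell\in\mathcal L$ that contains a collisional inner arc must satisfy $\underline\ell\in\mathcal L_s$; the corollary is then the contrapositive.

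First I would use the Keplerian nature of the inner dynamics. A collisional Keplerian orbit has zero angular momentum and is therefore a straight line through the origin. After Levi-Civita regularization a collisional inner arc thus enters $D$ at some point $z_\ast\in\partial D$ along this radial ray, reaches the origin, and bounces back along the same ray, exiting $D$ again at the same point $z_\ast$. If this collision sits in the inner arc joining the symbols $\ell_k$ and $\ell_{k+1}$, then $z_\ast$ is simultaneously the exit of the outer arc labelled $\ell_k$ and the entry of the outer arc labelled $\ell_{k+1}$; since the connected components of $\mathcal A$ are pairwise disjoint, this already forces $\ell_k=\ell_{k+1}$.

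Next I would prove that the whole bi-infinite trajectory is symmetric under time reversal about the collision instant $T$. Locally this is the standard reversibility of a Kepler collision: in Levi-Civita coordinates $z=w^2$ one has $w(T)=0$, and the regularized equation is linear (hence odd) in $w$, so the ansatz $\tilde w(\tau)=-w(2T-\tau)$ solves the same ODE with the same data at $\tau=T$; uniqueness then gives $w(\tau)=-w(2T-\tau)$ and hence $z(T+s)=z(T-s)$ on the collisional arc. At $z_\ast$, Snell's law \eqref{eq:SnellIntro} depends only on the angles of the two velocities with the normal to $\partial D$, so it is preserved by the simultaneous reversal of the inner and outer velocity vectors. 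Consequently the outer arc leaving $z_\ast$ after the bounce departs with velocity opposite to the arrival velocity of the outer arc that brought the orbit to $z_\ast$ before the bounce. Because $V_E$ is autonomous and radially symmetric, the outer dynamics is reversible, so the post-collision outer arc is the exact time-reverse of its predecessor and lands back on $\partial D$ at the same point where the predecessor began. Applying the same reversibility of Snell's law at this new refraction and iterating forward and backward across all subsequent refractions yields the global involution $z(T+s)=z(T-s)$ on the whole bi-infinite orbit. Pushing the involution down to the symbol sequence, the time-symmetric pair of outer arcs labelled $\ell_k,\ell_{k+1}$ generalises to a pair $\ell_{k-j+1},\ell_{k+j}$ that must lie in the same connected component of $\mathcal A$ for every $j\ge 1$; together with $\ell_k=\ell_{k+1}$ this gives $\ell_{k+j}=\ell_{k+1-j}$ for all $j\in\mathbb Z$, so $\underline\ell$ is symmetric about the axis between positions $k$ and $k+1$, i.e.\ $\underline\ell\in\mathcal L_s$.

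The main obstacle I anticipate is the careful bookkeeping needed to propagate this symmetry across every refraction. One has to verify that in the time-reversed branch no refraction triggers a violation of the critical-angle condition \eqref{eq:angolocritico} that is absent on the original branch, and that the uniqueness clauses built into the inner- and outer-arc properties of Definition \ref{def:D_admiss_intro} truly force the symbolic labels of time-symmetric pairs of arcs to coincide at every step. These are precisely the structural ingredients that make the symmetry axis well-defined on $\mathcal L$ and single out $\mathcal L_s$ as the collisional locus of the symbolic dynamics.
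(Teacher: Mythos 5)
Your argument is correct and follows essentially the same route as the paper: Section \ref{ssec:collisioni} observes that a collisional inner arc is homothetic, forces the trajectory to reflect into itself by uniqueness of the Cauchy problem together with Snell's law, and hence forces a symmetry axis on the word, after which the corollary is the contrapositive. Your write-up merely supplies the details (Levi--Civita reversibility, disjointness of the intervals giving $\ell_k=\ell_{k+1}$, propagation of the reflection through each refraction) that the paper leaves as a two-sentence remark.
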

Let us now come back to Theorem \ref{thm:ex_dyn_sym_intro}; the admissibility of $D$ stated in Definition \ref{def:D_admiss_intro} has a quite implicit formulation, nevertheless there are some simple geometric sufficient conditions to guarantee it. They are related to the presence of the special directions that define homothetic arcs, called, from this moment on, {\em central configurations}.

\begin{definition}\label{del:cc_intro}
	A \emph{central configuration} is a point $\overline P \in \partial D$ such that 
	\begin{itemize}
		\item $\overline P$ is a constrained critical point for $\|\cdot\|_{|_{\partial D}}$, that is, the position vector $\overline P$ is orthogonal to the boundary $\partial D$ at $\overline P$;
		\item the half-line connecting the origin to $\overline P$ intersects $\partial D$ only at $\overline P$.
	\end{itemize}
	A central configuration is termed \emph{strict} if it is a strict local maximum of minimum for $\|\cdot\|_{|_{\partial D}}$.
\end{definition}

\begin{theorem}\label{thm:sym_dyn_cc} 
	Let us suppose that $\gamma\in C^1([0,L])$ and that there exist $m$ strict central configurations, not antipodal if $m=2$. Then the domain $D$ is admissible and our refraction billiard admits a symbolic dynamics as defined in Theorem \ref{thm:ex_dyn_sym_intro}. 
\end{theorem}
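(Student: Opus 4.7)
The plan is to show that $D$ is admissible in the sense of Definition \ref{def:D_admiss_intro}, after which Theorem \ref{thm:ex_dyn_sym_intro} immediately yields the symbolic dynamics. Denote the strict central configurations by $\overline P_1,\dots,\overline P_m$ and choose parameters $t_1<\cdots<t_m$ in $[0,L]$ with $\gamma(t_i)=\overline P_i$. For $\delta>0$ small, I would set $(a_i,b_i)\eqdef(t_i-\delta,t_i+\delta)$ and $\mathcal A=\bigcup_{i=1}^m (a_i,b_i)$, and then verify the three constituent properties of the admissibility condition for $\delta$ sufficiently small and the internal energy $h$ sufficiently large; Theorem \ref{thm:ex_dyn_sym_intro} is then applied.

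For the outer-arc property on each $(a_i,b_i)$, the natural reference object is the degenerate homothetic outer arc based at $\overline P_i$: since the position vector $\overline P_i$ is orthogonal to $\partial D$, this trajectory exits radially, reaches the Hill's boundary $\|z\|=\sqrt{2\mathcal E}/\omega$ with zero velocity, and returns to $\overline P_i$ along the same radius. The outer flow is smooth in a neighbourhood of this orbit, and an implicit function argument on the shooting map from (boundary point, exit angle) to the next boundary hit provides, for $\delta$ small, existence and uniqueness of an outer arc joining any ordered pair of points in $\gamma((a_i,b_i))$, which is Definition \ref{def:prop_arco_est}. The change-sign property follows from the strict extremality of $\overline P_i$ for $\|\cdot\|$ restricted to $\partial D$ combined with the $C^1$ regularity of $\gamma$: the derivative $(d/dt)\|\gamma(t)\|$ vanishes at $t_i$ and changes sign across it, so that, as recalled after Definition \ref{def:change_sign_prop} and in Example \ref{ex:esempio}, at high $h$ the relevant partial derivative of the Jacobi distance inherits the same sign change.

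The main obstacle, and the reason $h$ must be taken large, is the inner-arc property on $\mathcal A$. Inside $D$ the energy relation $\tfrac12\|\dot z\|^2-\mu/\|z\|=\mathcal E+h$ shows that as $h\to+\infty$ the Keplerian trajectories approach straight segments uniformly on compact subsets of $\R^2\setminus\{0\}$, i.e.\ the centre acts as a scatterer. For any ordered pair $(a_i,b_i),(a_j,b_j)$ that is not antipodal, the limiting straight chord between any two points of the corresponding boundary arcs stays in $\overline D$ and avoids the origin, hence is a non-collisional, non-degenerate limit trajectory; a standard continuation/implicit-function argument then produces, for $h$ above a threshold, a unique Keplerian inner arc in the prescribed topological class joining such pairs of points, which is exactly Definition \ref{def:prop_arco_int}. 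The hypothesis that, for $m=2$, the two central configurations are not antipodal ensures, after possibly shrinking $\delta$, that the two intervals themselves are not antipodal, so that $\mathcal L$ supports a non-trivial grammar; for $m\geq 3$, mutual antipodality among radial configurations can involve at most pairs, hence non-antipodal consecutive transitions always exist. Once the three properties are verified, $D$ is admissible in the sense of Definition \ref{def:D_admiss_intro} and Theorem \ref{thm:ex_dyn_sym_intro} delivers the continuous surjection $\pi\colon X\to\mathcal L$ semi-conjugating the first-return map with the Bernoulli shift, concluding the proof.
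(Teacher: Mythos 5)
Your overall strategy (shrink to intervals $(t_i-\delta,t_i+\delta)$, verify the three properties of Definition \ref{def:D_admiss_intro}, then invoke Theorem \ref{thm:ex_dyn_sym_intro}) is exactly the paper's, and your treatment of the outer-arc property (perturbation of the homothetic outer arc via the shooting map) and of the change-sign property (strict extremality gives the Euclidean change-sign property, which upgrades to the change-sign property for large $h$ via Proposition \ref{prop:cs_geom}) matches Propositions \ref{prop: esistenza esterni} and \ref{prop:cs_cc}. Your remark that for $m\geq 3$ each central configuration has at most one antipodal partner (since, by local star-convexity, no two central configurations lie on the same ray) is also the argument used for \textit{(IP1)}.

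There is, however, a genuine error in your verification of the inner-arc property. The inner arcs required by Definition \ref{def:prop_arco_int} are the \emph{topologically non-trivial} ones, i.e.\ precisely those \emph{not} homotopic to the straight chord in $\R^2\setminus\{0\}$; these are the scattering arcs that pass close to the Keplerian centre. Their limit as $h\to\infty$ is not the straight chord avoiding the origin, as you claim, but the broken line $c(\gamma(\xi_1)\,0\,\gamma(\xi_2))$ made of the two radial segments through the origin (this is the content of the Battin-type estimate \eqref{eq: convergenza arco interno}, and is also reflected in Lemma \ref{lem:lemma_sus}, where the leading term of the Jacobi length is $\sqrt{E}\left(\|p_0\|+\|p_1\|\right)$, the length of the broken line, not of the chord). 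Consequently your containment argument fails: what must be checked is not that the chord between the two boundary arcs lies in $\overline D$ (which moreover need not hold for a non-convex $D$), but that a neighbourhood of the two radial segments from $\gamma(\xi_1)$ and $\gamma(\xi_2)$ to the origin lies in $D$. This is where the local star-convexity built into Definition \ref{def:cc} is actually used: one encloses the limiting broken lines in the open set $A=\bigcup_i T_i\cup B_\rho(0)$, with $T_i$ the conic sectors over slightly enlarged intervals, shows $A\subset D$ by $\lsc{}$, and then uses the uniform convergence \eqref{eq: convergenza arco interno} to conclude that for $h$ large all \tnt\ arcs stay in $A$. Note also that existence and uniqueness of the \tnt\ arc for non-antipodal endpoints holds for \emph{every} $h>0$ by Lemma \ref{lem:lemma_sus}; the large-$h$ threshold in \textit{(IP2)} is needed only for the confinement in $D$, not, as your continuation argument suggests, to produce the arc in the first place.
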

In this framework, strengthening the assumptions on the central configurations we can obtain results on the analytic non integrability of the system.

\begin{definition}
\label{def:admiss_non_deg_intro}	
A central configuration $\overline P$ is termed \emph{non degenerate} if $\gamma$ is of class $C^2$ in a neighborhood of $\overline{P}$ and the second differential of the function $\|\cdot\|_{|_{\partial D}}$ is not degenerate at $\overline P$.
\end{definition}

The non-degeneracy of a central configuration allows to prove that, as far as $h$ is large enough, the corresponding homothetic equilibrium trajectory is a saddle (see Proposition \ref{prop:selle} and \cite{deblasiterraciniellissi}). The hyperbolicity of such trajectory determines the presence of a stable and of an unstable manifold which are the key objects to prove the following results.

\begin{theorem}\label{thm:no_an_int_intro}
Let us suppose that there exist $m \geq 2$ strict central configurations, not antipodal if $m=2$. Then:
\begin{itemize}
	\item assuming that one of them is non-degenerate
	and $h$ large enough, there are no analytic first integrals associated to the dynamics which are not constant;
	\item if $h$ is large enough, for every pair of distinct and non-degenerate central configurations there exist infinitely many heteroclinic connections between the corresponding homothetic trajectories.
\end{itemize}
\end{theorem}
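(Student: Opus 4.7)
The two statements share a common structural ingredient. By Proposition \ref{prop:selle}, whenever $\overline P_i$ is a non-degenerate central configuration and $h$ is large enough, the associated homothetic motion produces a hyperbolic fixed point $P_i^*$ of the first-return map $\mathcal F$ of Theorem \ref{thm:ex_dyn_sym_intro}, endowed with analytic local stable and unstable curves $W^{s}_{\mathrm{loc}}(P_i^*)$ and $W^{u}_{\mathrm{loc}}(P_i^*)$ in the two-dimensional Poincar\'e section. The plan is to use the symbolic dynamics of Theorem \ref{thm:ex_dyn_sym_intro} to construct orbits that asymptote to these saddles, produce first heteroclinic and then homoclinic connections, and finally extract from them, via standard analytic rigidity, an obstruction to the existence of non-constant real-analytic first integrals.

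\smallskip

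\emph{Heteroclinics (second bullet).} Fix distinct non-degenerate central configurations $\overline P_i$ and $\overline P_j$; if the pair $(i,j)$ happens to be antipodal (which, under our hypothesis, can only occur for $m\geq 3$), interpose an auxiliary index $k\in\mathcal I$ such that $(i,k)$ and $(k,j)$ are non-antipodal. For each admissible finite middle block $\underline b=(b_1,\ldots,b_r)$ compatible with the transition $i\rightsquigarrow j$ (possibly routed through $k$) and for each integer $N\geq 1$, the bi-infinite word
\[
\underline\ell^{\,\underline b,N}\eqdef (\ldots,i,i,\underbrace{i,\ldots,i}_{N},b_1,\ldots,b_r,\underbrace{j,\ldots,j}_{N},j,j,\ldots)
\]
lies in $\mathcal L$, and Theorem \ref{thm:ex_dyn_sym_intro} yields a realizing trajectory $\gamma_{N,\underline b}$. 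Because its crossings during the $N$-blocks are forced into $\gamma((a_i,b_i))$ and $\gamma((a_j,b_j))$, these portions shadow the corresponding homothetic motions; a diagonal compactness extraction as $N\to+\infty$ produces a limit orbit $\gamma_{\underline b}$ with $\alpha$-limit $P_i^*$ and $\omega$-limit $P_j^*$, i.e. a heteroclinic connection. Letting $\underline b$ range over infinitely many admissible blocks of increasing length yields infinitely many heteroclinics, pairwise distinct since their sequences of crossings on $\partial D$ differ.

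\smallskip

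\emph{Non-integrability (first bullet).} Applied now to words of the form $(\ldots,i,i,b_1,\ldots,b_r,i,i,\ldots)$ with at least one entry $b_s\neq i$, the same construction supplies infinitely many \emph{homoclinic} orbits at $P_i^*$. Any analytic first integral $F$ of the refraction flow is necessarily constant on the invariant manifolds $W^{s,u}(P_i^*)$, with the common value $F(P_i^*)$, and therefore also on every such homoclinic orbit. The $\lambda$-lemma forces the associated transverse intersection points in the Poincar\'e section to cluster at $P_i^*$ along both invariant branches; combined with the non-trivial shift dynamics of Theorem \ref{thm:ex_dyn_sym_intro}, which provides an invariant horseshoe-like Cantor set in which periodic orbits are dense, this implies that the level set $\{F=F(P_i^*)\}$ accumulates at $P_i^*$ on a two-dimensional set. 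Real-analyticity of $F$ on the section then makes it locally constant near $P_i^*$, and unique continuation along the flow propagates the value $F(P_i^*)$ to the connected regular part of the phase space.

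\smallskip

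\emph{Main obstacle.} The delicate step is upgrading symbolic shadowing to genuine asymptotic behaviour. Theorem \ref{thm:ex_dyn_sym_intro} produces realizing orbits through a Poincar\'e--Miranda argument which gives no automatic uniform control nor continuous dependence on the word, so one must refine the underlying variational construction to secure precompactness of the families $\{\gamma_{N,\underline b}\}$ and to guarantee that their limits actually reach the saddles rather than merely linger near them. A companion difficulty is the distinctness of the infinitely many connecting orbits; this is kept under control by tracking the boundary-crossing signature, and it is precisely at this point that the non-antipodality encoded in $\mathcal L$ is essential, since it ensures uniqueness of the inner arc within its topological class and prevents different middle blocks from collapsing onto a single trajectory.
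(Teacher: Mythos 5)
Your heteroclinic construction is essentially the paper's (Corollary \ref{cor:hetero}): one takes the bi-infinite word $(\ldots,i,i,[\ldots],j,j,\ldots)$, invokes the surjectivity of $\pi$ (Proposition \ref{prop:pi_suriettiva}) to realize it, and uses the hyperbolicity of the two homothetic equilibria (Proposition \ref{prop:selle}) to conclude that an orbit whose crossings remain forever in $\gamma([\alpha_i,\beta_i])$ backward and in $\gamma([\alpha_j,\beta_j])$ forward must lie on $W^u(P_i^*)\cap W^s(P_j^*)$; infinitely many middle blocks give infinitely many distinct words, hence distinct connections. Your extra parameter $N$ and the diagonal extraction are redundant --- the word already has infinite constant tails and the limiting procedure is exactly the content of the proof of Proposition \ref{prop:pi_suriettiva} --- and the issue you flag as the ``main obstacle'' (lingering near the saddle versus converging to it) is precisely what hyperbolicity settles: an orbit confined for all forward times to a sufficiently small neighbourhood of a hyperbolic saddle of the return map lies on its local stable manifold. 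Also note that the constant tails $(\ldots,i,i,\ldots)$ are admissible because \textit{(IP1)} already forbids self-antipodal intervals, so no auxiliary index is needed for the tails themselves.

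The non-integrability argument, however, has a genuine gap. Knowing that an analytic first integral $F$ takes the value $c=F(P_i^*)$ on $W^s\cup W^u$ and on all homoclinic points does not force $F$ to be locally constant: in two dimensions the level set of a non-constant analytic function can be a one-dimensional analytic variety, and $W^s\cup W^u$ (together with all points accumulating on it) could a priori sit inside such a curve; ``accumulation at $P_i^*$ on a two-dimensional set'' is not substantiated, a Cantor set is not two-dimensional, and constancy on a horseshoe is not automatic (distinct periodic orbits lie in distinct level sets unless you first prove they all carry the value $c$). Moreover your appeal to the $\lambda$-lemma presupposes transversality of the homoclinic intersections, which is established nowhere. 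The paper's argument (Theorem \ref{thm:no_an_int}, following Kozlov) circumvents all of this by producing, for each \emph{fixed} $\hat\xi\in\bigcup_i[\alpha_i,\beta_i]$, infinitely many distinct angles $\alpha$ in the compact interval $(-\alpha_0,\alpha_0)$ such that $(\hat\xi,\alpha)$ lies on the stable set of the saddle: this is Proposition \ref{prop:mezze_etero}, which realizes any forward-infinite admissible word from a prescribed base point $\xi$, applied to the infinitely many words with tail $(1,1,1,\ldots)$. The one-variable analytic function $\alpha\mapsto F(\hat\xi,\alpha)-c$ then vanishes on a set with an accumulation point, hence identically, and the arbitrariness of $\hat\xi$ concludes. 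Your construction never produces infinitely many level-set points on a single fibre (or on any analytic curve not already contained in the level set), which is the step that actually kills the first integral.
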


It is a matter of fact that the results of Theorem \ref{thm:no_an_int_intro} represent a further step in the proof of the presence of chaos; in particular multiple heteroclinics are usually indicators of a complex behaviour and actually can be used to prove the presence of a symbolic dynamics 
(see \cite{BalGilGua2022,GuaParSeaVid2021,GuaMarSea2016}).
Our final aim is achieved when every central configuration is non-degenerate.

\begin{theorem}\label{thm:final_intro}
	Let us suppose that there exist $m \geq 2$ non-degenerate central configurations, not antipodal if $m=2$. Then, if $h$ is large enough, the projection map $\pi$ defined in Theorem \ref{thm:ex_dyn_sym_intro} is also injective. In other words, the dynamics of the refraction billiard admits a topologically chaotic subsystem. 
\end{theorem}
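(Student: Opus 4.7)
The map $\pi$ provided by Theorem~\ref{thm:ex_dyn_sym_intro} is already surjective and continuous via the Poincar\'e--Miranda shadowing argument, so the only task is to exclude the possibility that two distinct initial conditions in $X$ generate trajectories realizing the same admissible word $\underline{\ell}\in\mathcal{L}$. The decisive new ingredient with respect to Theorem~\ref{thm:sym_dyn_cc} is the non-degeneracy (Definition~\ref{def:admiss_non_deg_intro}) of every central configuration, which by Proposition~\ref{prop:selle} promotes each corresponding homothetic orbit to a hyperbolic saddle equilibrium of the first return map $\mathcal{F}$ for large $h$. The plan is to exploit this uniform hyperbolicity to upgrade the critical point produced by the shadowing lemma from merely existing to being non-degenerate and isolated, and then to convert this local uniqueness into global uniqueness inside the fiber $\pi^{-1}(\underline{\ell})$.

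\medskip

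\textbf{Local uniqueness via Hessian non-degeneracy.} I would first revisit the length functional $\mathcal{J}_{\underline{\ell}}$ whose critical points yielded the shadowing trajectories, fixing a symmetric truncation $(\ell_{-N},\ldots,\ell_N)$ of the word. Its natural variables are the transition parameters $s_i\in(a_{\ell_i},b_{\ell_i})$ on $\partial D$, and the Euler--Lagrange conditions are exactly Snell's law at each crossing (cf.\ Appendix~\ref{sec:appB}). The Hessian $D^2\mathcal{J}_{\underline{\ell}}$ takes a block--tridiagonal shape, with diagonal blocks encoding the second variation at each visit to a saddle and off--diagonal blocks encoding the coupling through the successive outer/inner arcs. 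The non-degeneracy of each central configuration, together with the hyperbolic character of Proposition~\ref{prop:selle}, makes every diagonal block invertible with a spectral gap that grows with $h$; the outer arcs, being perturbative, produce a small off--diagonal coupling. A diagonal--dominance argument then gives invertibility of $D^2\mathcal{J}_{\underline{\ell}}$ with controlled inverse, so the critical point found by Poincar\'e--Miranda is non-degenerate and isolated in its fiber.

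\medskip

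\textbf{From isolation to injectivity.} To go from local to global uniqueness, I would argue in the spirit of the Smale--Birkhoff shadowing lemma. Any trajectory realizing $\underline{\ell}$ must remain, between consecutive crossings indexed by $i$, inside a fixed tubular neighbourhood of the homothetic saddle labelled by $\ell_i$. Describing such trajectories as intersections of forward iterates of stable sets and backward iterates of unstable ones under $\mathcal{F}$, the uniform hyperbolicity of the previous step makes the diameter of these intersections decrease exponentially with the truncation length $N$. Passing $N\to\infty$, the intersection collapses to a single point, so $\pi^{-1}(\underline{\ell})$ is a singleton. Combined with the surjectivity from Theorem~\ref{thm:ex_dyn_sym_intro}, this turns $\pi$ into a continuous bijection intertwining $\mathcal{F}$ with the Bernoulli shift $\sigma_r$, which is exactly the topological conjugacy needed to assert topological chaos.

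\medskip

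\textbf{Main obstacle.} The central technical difficulty is that the dynamics is not smooth across $\partial D$: Snell's law is an interface condition rather than a flow, and the classical invariant--manifold machinery of hyperbolic dynamical systems does not apply off the shelf. Everything has to be rephrased inside the length functional, where the matching conditions appear as Euler--Lagrange equations and the hyperbolic expansion/contraction rates must be read off the second variation of the outer harmonic and inner Keplerian arcs. Proving that the non-degeneracy of the central configurations translates into a uniform spectral gap for the Hessian, uniformly in $\underline{\ell}$, thus requires a careful expansion exploiting that at large $h$ the inner arcs are close to straight scattering trajectories and the outer arcs act perturbatively on the boundary parameters. Once these uniform estimates are in place, the argument above will give injectivity of $\pi$ for every admissible word, and hence Theorem~\ref{thm:final_intro}.
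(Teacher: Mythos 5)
Your first step is essentially the paper's own key lemma (Lemma \ref{lem:pi_iniettiva}): by Lemma \ref{lem:lemma_sus}, $\partial^2_a S_I(\xi,\xi_I;h)=\sqrt{\mathcal E+h}\,\frac{d^2}{d\xi^2}\|\gamma(\xi)\|+O(h^{-1/2})$, so non-degeneracy of each central configuration makes the diagonal entries of the Hessian of the total Jacobi length grow like $\sqrt{h}$ with a definite sign on each $[\alpha_i,\beta_i]$, while the outer contributions and the mixed derivatives stay bounded. Where you diverge from the paper is in how this is converted into injectivity of $\pi$. The paper does not invoke hyperbolicity, stable/unstable sets, or any exponential contraction of nested intersections: it simply observes that, in the Poincar\'e--Miranda setup of Proposition \ref{prop: punto critico azione}, each component $F_k=\partial_{\xi_k}W_{\underline\ell}$ is strictly monotone in its own variable $\xi_k$ (uniformly, by the Hessian estimate), which forces the zero of the system \eqref{eq: miranda sistema} to be unique in the rectangle $R$; uniqueness of the realizing trajectory, hence injectivity of $\pi$, follows at the level of the finite-dimensional variational problem.

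This matters because the part of your argument that actually carries the burden of uniqueness --- the Smale--Birkhoff-type collapse of $\bigcap$ of iterated stable and unstable sets as $N\to\infty$ --- is precisely the part you leave unproven, and you correctly identify why it is hard: $\mathcal F$ is only a first-return map with a refraction interface, Proposition \ref{prop:selle} gives hyperbolicity only \emph{at} the homothetic equilibria, and you would need uniform expansion/contraction rates along \emph{every} orbit shadowing the word, not just near the saddles. Establishing that from the second variation is a substantial piece of work that your sketch does not supply. The gap is real but avoidable: once you have the uniform sign and $\sqrt{h}$-growth of $\partial^2_{\xi_k}W_{\underline\ell}$ together with the boundedness of the off-diagonal couplings (i.e.\ strict diagonal dominance of the Hessian on the whole rectangle $\mathbb S_{\underline\ell}$), a finite-dimensional monotonicity/univalence argument already yields global uniqueness of the critical point in each truncation, and no invariant-manifold machinery is needed. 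I would advise restructuring the proof so that injectivity is concluded directly from the Hessian estimate, as in Theorem \ref{thm:final}, and dropping the shadowing step.
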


{The injectivity of the map $\pi$, or the uniqueness of the solution corresponding to a prescribed word, can also be proved, under the same non-degeneracy conditions, by means of the implicit function theorem as in \cite{bolotin2000periodic} and \cite{Bol2017}.} Furthermore, 
Theorem \ref{thm:final_intro} provides the analytical justification of the numerical simulations presented in  \cite{deblasiterraciniellissi}: here a centered elliptic domain, namely, with the singularity in its center, is taken into consideration and the presence of diffusive orbits is observed for increasing value of $h$. Of course such domain satisfies the assumptions of Theorem \ref{thm:final_intro}. \\
Let us conclude with a short digression on reflective Kepler billiards: in this dynamical models a central mass dominates the inner dynamics, but trajectories reflect elastically on the boundary. It is easy to verify that the results presented in our work apply also in this case and, actually, the dynamics is simpler since we do not have the outer part. Thus, also in this case, the dynamics associated to a centered ellipse is chaotic. This negatively complements the recent results about integrability of the focused elliptic Kepler billiard  by Takeuchi and Zhao \cite{Lei2021,takeuchi2021conformal, takeuchi2022projective}, namely,  with the singularity in one of the two foci.
This is coherent with our study, since focused ellipses, having only two central configurations which are antipodal, are not admissible domain. This provides also a useful counterexample on the importance of the non-antipodality condition, which is finally far from being just a technical assumption.

\section{Symbolic dynamics under general assumptions on $D$}\label{sec:symb_dyn}

\subsection{General assumptions on $D$}

We Let us consider an arc-length parametrization of the boundary $\partial D$ given by the function $\gamma:[0,L] \to \mathbb{R}^2$, for some $L>0$. 

\begin{definition}
	\label{def:prop_arco_est}
	An open interval $(a,b) \subset [0,L]$ satisfies the \emph{outer-arc property} if for any pair $\xi_1,\xi_2 \in (a,b)$ there exists a unique solution of the fixed-ends problem 
	\begin{equation}\label{eq:bolzaext_intro}
		\begin{cases}
			z''(s)=-\omega^2 z(s),\quad &s\in[0,T],\\
			\displaystyle	\frac{1}{2}\|z'(s)\|^2-\mathcal E+\frac{\omega^2}{2}\|z(s)\|^2=0,\quad &s\in[0,T],\\
			z(s)\notin D, &s\in(0,T),\\
			z(0)=\gamma(\xi_1), \text{ }z(T)=\gamma(\xi_2)
		\end{cases}
	\end{equation}
	for some $T \eqdef T(\xi_1,\xi_2)>0$.		
\end{definition}

Some remarks on the previous definition are due. A possible obstruction to the existence of a solution for Problem \eqref{eq:bolzaext_intro} is condition $z(s)\notin D$, $s\in(0,T)$; it turns out that such constraint is satisfied as far as the domain $D$ enjoys some simple geometric assumptions.

\begin{definition}\label{def:local_star_conv}
	We say that the domain $D$ is \emph{local star-convex with respect to $\xi\in[0, L]$}, in short $\lsc{\xi}$, if the half-line connecting the origin to $\gamma(\xi)$ intersects $\partial D$ only in $\gamma(\xi)$. Similarly, $D$ is \emph{local star-convex with respect to $A\subset [0, L]$}, in short, $\lsc{A}$, if it is $\lsc{\xi}$ for every $\xi \in A$.
\end{definition}

Choosing for instance $D=B_1(0)$, every interval $(a,b) \subset [0,2\pi]$ such that $b-a < \pi$ enjoys the outer-arc property (see also \cite[Theorem 4.1]{IreneSusNew}).
In general, we will prove in Section \ref{sec:adm_dom_cc} that when the quantity $b-a$ is small enough and $D$ is $\lsc{(a,b)}$ then $(a,b)$ satisfies the outer-arc property. 

We now proceed with analogous definitions concerning the inner dynamics. In this case we need a more complex framework to guarantee both existence and uniqueness results. 

\begin{definition}
	\label{def:param_antip}
	We say that $\xi_1,\xi_2 \in [0,L]$ are {\em antipodally directed}, or shortly, {\em antipodal}, if the origin belongs to the segment connecting $\gamma(\xi_1)$ and  $\gamma(\xi_2)$. On the other hand, two intervals $(a,b),(c,d) \subset (0,L)$ are {\em not antipodal} if every $\xi_1 \in (a,b)$ and $\xi_2 \in (c,d)$ are not antipodal. Note that $(a,b)$ and $(c,d)$ have not to be distinct.   
\end{definition}

\begin{definition}\label{def: topological char}
	Let $\xi_1, \xi_2\in[0,L]$ be not antipodal and let $\alpha:[0,A]\to \R^2\setminus\{0\}$ be a continuous curve such that $\alpha(0) = \gamma(\xi_1)$ and $\alpha(A)=\gamma(\xi_2)$. We say that $\alpha$ is {\em topologically non-trivial}, in short \tnt,  if $\alpha([0, A])$ is not homotopic to the line segment connecting $\gamma(\xi_1)$ and $\gamma(\xi_2)$ in the punctured plane $\R^2\setminus\{0\}$.  
\end{definition}

\begin{definition}
\label{def:prop_arco_int}
We say that $\mathcal{A}$ satisfies the {\em inner-arc property} if:
\begin{itemize}
	\item[\textit{(IP0)}] it is a disjoint union of open intervals
	$$
	(a_i,b_i) \subset [0,L],\quad i \in \mathcal I\eqdef\{1,\dots,m\}, \quad m \geq 2;
	$$
	\item[\textit{(IP1)}] for every $i = 1,\ldots,m$ there exists $j \neq i$ such that both intervals $(a_i,b_i)$ and $(a_j,b_j)$ are not antipodal to $(a_i,b_i)$;
	\item[\textit{(IP2)}] there exists $h_0>0$ such that for every $\xi_1,\xi_2$ belonging to two non antipodal intervals (possibly the same) and for every $h>h_0$ there exists a unique \tnt\, solution of the fixed-ends problem
	\begin{equation}\label{eq: problema interno}
	\begin{cases}
		\displaystyle	z''(s)=-\mu\frac{z(s)}{\|z(s)\|^3},\quad &s\in[0,T],\\
		\displaystyle	\frac{1}{2}\|z'(s)\|^2-\mathcal E-h-\frac{\mu}{\|z(s)\|}=0, \quad &s\in[0,T],\\
		z(s)\in D, &s\in(0,T),\\
		z(0)=\gamma(\xi_1), \text{ }z(T)=\gamma(\xi_2),
		\end{cases}
	\end{equation}
	for some $T \eqdef T(\xi_1,\xi_2;h)>0$.		
	If $\xi_1 = \xi_2$, the solution is intended in a regularized sense. 
\end{itemize}
\end{definition}
We refer to \cite[Theorem 4.2]{IreneSusNew} and \cite[Proposition 6.3]{deblasiterraciniellissi}) to provide some examples of simple domains where the existence of inner arcs is proved.\\
The local star-convexity assumption introduced in Definition \ref{def:local_star_conv} plays a role also in the inner case: if ${\mathcal{A}}$ is a disjoint union of open intervals satisfying \textit{(IP1)} and $\tilde {\mathcal{A}}$ is an open set such that $\overline {\mathcal{A}} \subset \tilde{\mathcal{A}} \subset [0,L]$ 
satisfying $\lsc{\tilde{\mathcal{A}}}$ then ${\mathcal{A}}$ satisfies \textit{(IP2)}, hence the whole inner arc property. The explicit computations will be carried out in Section \ref{sec:adm_dom_cc}.

In order to guarantee the existence of a symbolic dynamics we need a third condition on the domain $D$, which is based on the notion of Jacobi length introduced in Appendix \ref{sec:appB}, Eq. \eqref{eq:defS}. 
Given $\mathcal A$ as in Definition \ref{def:prop_arco_int}, let
\begin{equation}\label{eq:def_not_antipodal}
	NA(i)\eqdef\left\{j\in\mathcal I \text{ such that $(a_i, b_i)$ and $(a_j,b_j)$ are not antipodal}\right\}, \quad i \in \mathcal I. 
\end{equation}
By the assumptions on $\mathcal A$, $\# NA(i) \geq 2$, $i \in \mathcal I$. 

In the next definition we focus our attention on a union of compact intervals contained in $\mathcal A$ since the behaviour of our arcs at the edges of the intervals must be investigated.
\begin{definition}\label{def:change_sign_prop}
Let $\mathcal{A}$ satisfy the inner-arc property and suppose that any $(a_i,b_i)$ satisfies the outer-arc property. We say that $\mathcal{A}$ satisfies the {\em change-sign property} if there exist $h_1\geq h_0$ and $m$ compact intervals $[\alpha_i,\beta_i] \subset (a_i,b_i)$ such that for every $h>h_1$ the following inequality holds
\begin{equation*}
	\left(\partial_b S_E(\xi_E, \alpha_i)+\partial_a S_I(\alpha_i, \xi_I;h)\right)\left(\partial_b S_E(\xi_E, \beta_i)+\partial_a S_I(\beta_i, \xi_I;h)\right)<0, 
\end{equation*}
for any choice of $i \in \mathcal I$, $\xi_E\in [\alpha_i, \beta_i]$ and $\xi_I\in\bigcup_{j\in NA(i)}[\alpha_j,\beta_j]$.
\end{definition}
The change-sign property can be derived by a purely geometrical assumption {on $\partial D$} as well. In such case, the passage from the \emph{geometric} to the \emph{dynamical} property is not as straightforward as in the previous ones, and would require more sophisticated estimates. 
\begin{definition}
	Let $\mathcal{A}$ satisfy the inner-arc property and suppose that any $(a_i,b_i)$ satisfies the outer-arc property. We say that $\mathcal A$ satisfies the \emph{Euclidean change-sign} property if there exist $m$ subintervals $[\alpha_i, \beta_i]  \subset (a_i,b_i)$ such that, defined $\sigma^-_i$ (resp. $\sigma^+_i$) the angles between {$\gamma(\alpha_i)$ (resp. $\gamma(\beta_i)$) and the tangent vector to $\partial D$ at $\gamma(\alpha_i)$ (resp. $\gamma(\beta_i)$), the following inequality holds}
\begin{equation*}
	\cos\left(\sigma_i^-\right)\cos\left(\sigma_i^+\right)<0. 
\end{equation*}
\end{definition}
The geometrical meaning of the Euclidean change-sign property is the following: observing that the angles $\sigma_{i}^\pm$ are always in $[0,\pi]$, the property is verified if the two angles are in the opposite sides with respect to $\pi/2$ (see Figure \ref{fig:Euc_change_sign}). Since
\begin{equation}\label{eq:cos}
	\cos(\sigma_i^-)=\frac{d}{d\xi}\|\gamma(\xi)\|_{|_{\xi=\alpha_i}}, \quad \cos(\sigma_i^+)=\frac{d}{d\xi}\|\gamma(\xi)\|_{|_{\xi=\beta_i}},  
\end{equation}
the Euclidean change-sign property corresponds to require that the function $\xi\mapsto\|\gamma(\xi)\|$ has a change of monotonicity between $\alpha_i$ and $\beta_i$. {This property is for example verified when  there is a strict minimum or maximum for $\gamma(\cdot)$ in each $(a_i, b_i)$; this case will be investigated in Section \ref{sec:adm_dom_cc}.}

\begin{figure}
	\centering
	\begin{overpic}[width=0.5\linewidth]{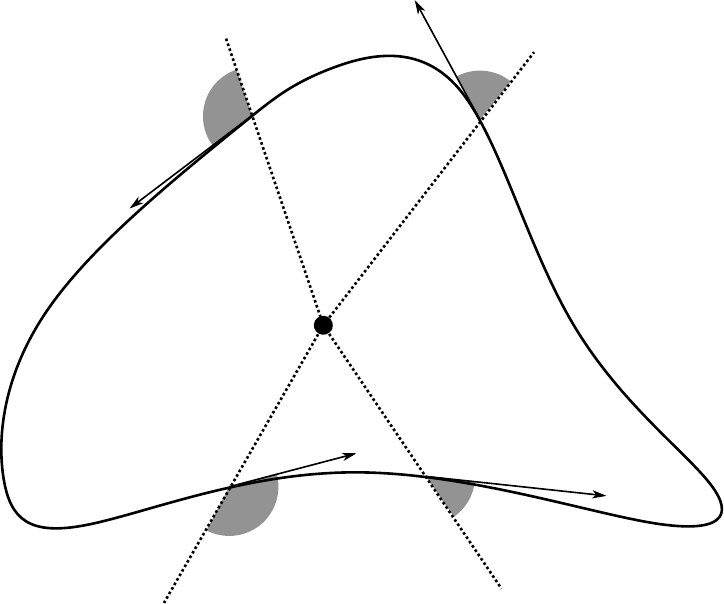}
		\put (68,64) {\rotatebox{0}{\small$\gamma(\alpha_1)$}}
		\put (65,75) {\rotatebox{0}{\small$\sigma^-_1$}}		
		\put (37,66) {\rotatebox{0}{\small$\gamma(\beta_1)$}}
		\put (22,69) {\rotatebox{0}{\small$\sigma^+_1$}}
		\put (37,10) {\rotatebox{0}{\small$\sigma^-_2$}}
		\put (20,18) {\rotatebox{0}{\small$\gamma(\alpha_2)$}}
		\put (50,12) {\rotatebox{0}{\small$\gamma(\beta_2)$}}
		\put (67,10) {\rotatebox{0}{\small$\sigma^+_2$}}
	\end{overpic}
	\caption{Geometrical meaning of the Euclidean change-sign property: at the edges of the segments of $\partial D$ the angles between the tangent vector and the radial direction are opposite w.r.t. $\pi/2$.}
	\label{fig:Euc_change_sign}
\end{figure}

\begin{proposition}\label{prop:cs_geom}
	If $\mathcal A$ satisfies the Euclidean change-sign property, then it satisfies the change-sign property as well. 	
\end{proposition}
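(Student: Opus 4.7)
The plan is to reduce the change-sign property to the Euclidean one by identifying $\partial_b S_E$ and $\partial_a S_I$ with tangential components of the momentum at the endpoint on $\partial D$, and then extracting the dominant contribution as $h\to+\infty$.

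First I would invoke the standard Hamilton--Jacobi relations for the Jacobi lengths attached to problems \eqref{eq:bolzaext_intro} and \eqref{eq: problema interno}. Letting $p_E$ denote the arrival momentum of the outer arc from $\gamma(\xi_E)$ to $\gamma(\xi)$ and $p_I$ the departure momentum of the \tnt inner arc from $\gamma(\xi)$ to $\gamma(\xi_I)$, one has
\[
\partial_b S_E(\xi_E,\xi)=\langle p_E,\gamma'(\xi)\rangle,\qquad
\partial_a S_I(\xi,\xi_I;h)=-\langle p_I,\gamma'(\xi)\rangle.
\]
The energy constraints in \eqref{eq:bolzaext_intro} and \eqref{eq: problema interno} give $\|p_E\|=\sqrt{2V_E(\gamma(\xi))}$, which is uniformly bounded in $\xi$, and $\|p_I\|=\sqrt{2V_I(\gamma(\xi))}\sim\sqrt{2h}$, which diverges with $h$.

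The second ingredient is the high-energy behaviour of the \tnt inner arc. A \tnt trajectory connecting two non-antipodal boundary points cannot be deformed to a straight segment in $\mathbb{R}^2\setminus\{0\}$, so in the regime where the Keplerian centre acts as a scatterer the arc must pass arbitrarily close to the origin and be essentially back-scattered (cf.\ \cite{bolotin2000periodic,Bol2017} and \cite[Theorem 4.2]{IreneSusNew}). Consequently, uniformly for $\xi$ and $\xi_I$ in compact non-antipodal subsets of $\mathcal{A}$, one has $p_I/\|p_I\|=-\gamma(\xi)/\|\gamma(\xi)\|+o(1)$ as $h\to+\infty$. Combining this with the identity $\langle\gamma(\xi),\gamma'(\xi)\rangle/\|\gamma(\xi)\|=\tfrac{d}{d\xi}\|\gamma(\xi)\|$ and \eqref{eq:cos}, one obtains
\[
\partial_a S_I(\xi,\xi_I;h)=\sqrt{2V_I(\gamma(\xi))}\,\cos\sigma(\xi)+o\!\left(\sqrt{h}\right),
\]
where $\sigma(\xi)$ denotes the angle between $\gamma(\xi)$ and $\gamma'(\xi)$. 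Evaluating at $\xi=\alpha_i$ and $\xi=\beta_i$ and adding the $O(1)$ outer contribution $\partial_b S_E$, the sign of the bracket $\partial_b S_E(\xi_E,\xi)+\partial_a S_I(\xi,\xi_I;h)$ is, for $h$ large, controlled by that of $\cos\sigma_i^\pm$. The hypothesis $\cos\sigma_i^-\cos\sigma_i^+<0$, together with the compactness of $[\alpha_i,\beta_i]$ and of $\bigcup_{j\in NA(i)}[\alpha_j,\beta_j]$, then delivers the uniform negativity of the product in Definition \ref{def:change_sign_prop} for every $h\ge h_1$ provided $h_1$ is sufficiently large.

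The main obstacle I foresee is to make the scattering estimate $p_I/\|p_I\|=-\gamma(\xi)/\|\gamma(\xi)\|+o(1)$ quantitative enough to be uniform in the boundary parameters, and in particular to ensure that the error remains $o(\sqrt h)$ once rescaled by $\|p_I\|\sim\sqrt{2h}$. This amounts to a refinement of Bolotin-type near-collision scattering estimates; the required uniformity can however be secured by exploiting the $C^1$ regularity of $\gamma$ together with the compactness of the parameter ranges and the fact that they are bounded away from antipodal configurations.
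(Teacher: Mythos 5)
Your proposal follows the same overall strategy as the paper's proof: identify the partial derivatives of the Jacobi lengths with tangential momentum components (this is exactly Lemma \ref{lem:derivateS}), observe that the outer contribution $\partial_b S_E$ is uniformly $O(1)$ because $\|p_E\|=\sqrt{2V_E}$ stays bounded, and show that $\partial_a S_I$ grows like $\sqrt{h}\,\tfrac{d}{d\xi}\|\gamma(\xi)\|$, so that for large $h$ the sign of the sum at $\alpha_i$ and $\beta_i$ is dictated by $\cos\sigma_i^\mp$ via \eqref{eq:cos}. The conclusion and the use of compactness to get a uniform threshold $h_1$ are also as in the paper.

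The one place where you diverge — and where the genuine gap sits — is the justification of the leading-order asymptotic for $\partial_a S_I$. You derive it from the qualitative back-scattering picture, asserting that $p_I/\|p_I\|=-\gamma(\xi)/\|\gamma(\xi)\|+o(1)$ uniformly; you correctly flag that this is the step needing quantitative control, but you do not supply it. Note that Hausdorff convergence of the arc to the broken line through the origin (Eq.\ \eqref{eq: convergenza arco interno}) does not by itself yield convergence of the \emph{initial direction}, let alone uniformly in the endpoints. The paper closes exactly this gap by a different, more computational device: Lemma \ref{lem:lemma_sus} gives the closed-form expansion
$S_I(\xi,\xi_I;h)=\sqrt{\mathcal E+h}\left(\|\gamma(\xi)\|+\|\gamma(\xi_I)\|\right)+\tfrac{\mu}{\sqrt{\mathcal E+h}}\bigl(F_1-\log(\cdot)\bigr)$
with $F_1$ uniformly $C^1$-bounded in the endpoint parameters, so differentiating in $\xi$ immediately produces the dominant term $\sqrt{\mathcal E+h}\,\tfrac{d}{d\xi}\|\gamma(\xi)\|$ plus an $O\bigl(h^{-1/2}\bigr)$ remainder, with all the required uniformity built in. (Incidentally, your worry about the error needing to be $o(\sqrt h)$ after rescaling is slightly overstated: a uniform $o(1)$ on the unit direction suffices, since $\tfrac{d}{d\xi}\|\gamma\|$ is bounded away from zero at $\alpha_i,\beta_i$ by hypothesis; but that uniform $o(1)$ still has to be proved, and Lemma \ref{lem:lemma_sus} is the ingredient that proves it.) So your argument is correct in structure and becomes complete once the scattering assertion is replaced by, or deduced from, the expansion in Lemma \ref{lem:lemma_sus}.
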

\begin{proof}
{Let us start by showing that the quantity $\partial_b S_E(\xi_1, \xi_2)$, $\xi_1,\xi_2\in[\alpha_i, \beta_i]$, $i \in \mathcal I$, is bounded uniformly in the endpoints. This is a simple consequence of Lemma \ref{lem:derivateS} from which we deduce the existence of a positive constant $C$ such that, for every $\xi_1, \xi_2\in [\alpha_i,\beta_i]$, $i\in \mathcal I$, 
\begin{equation*}
	\vert\partial_b S_E(\xi_1,\xi_2)\vert=\sqrt{V_E\left(\gamma(\xi_2)\right)}\Big\vert\cos\left(\angle\left(z'_E(T; \gamma(\xi_1),\gamma(\xi_2)), \dot\gamma(\xi_2)\right)\right)\Big\vert\leq C, 
\end{equation*}
where $z_E(\cdot; \gamma(\xi_1), \gamma(\xi_2))$ is the outer arc connecting $\gamma(\xi_1)$ to $\gamma(\xi_2)$. \\
Let us now fix $i \in \mathcal I$: by the Euclidean change-sign property and Eq. \eqref{eq:cos}, one has that 
\begin{equation*}
	\left(\frac{d}{d\xi}\|\gamma(\xi)\|_{|_{\xi=\alpha_i}}\right)\left(\frac{d}{d\xi}\|\gamma(\xi)\|_{|_{\xi=\beta_i}}\right) < 0;   
\end{equation*}
to fix the ideas, let us suppose
\begin{equation}\label{eq:segni_Euclidei}
	\frac{d}{d\xi}\|\gamma(\xi)\|_{|_{\xi=\alpha_i}}>0 \quad\text{and}\quad \frac{d}{d\xi}\|\gamma(\xi)\|_{|_{\xi=\beta_i}}<0.   
\end{equation}
Take now $\xi_E\in [\alpha_i, \beta_i]$, $\xi_I\in\bigcup_{j\in NA(i)}[\alpha_j,\beta_j]$, and suppose $h$ to be greater than $h_0$: we can then define the quantities $S_E(\xi_E, \alpha_i)+S_I(\alpha_i, \xi_I;h)$ and  $S_E(\xi_E, \beta_i)+S_I(\beta_i, \xi_I;h)$, corresponding to the total Jacobi lengths of the two concatenation outer-inner arcs connecting $\gamma(\xi_E)$ to $\gamma(\xi_I)$ and passing respectively through $\gamma(\alpha_i)$ and $\gamma(\beta_i)$. We want to show that, if $h$ is sufficiently large, 
\begin{equation}\label{eq:tesi_cs}
	\begin{aligned}
	&\partial_b S_E(\xi_E, \alpha_i)+\partial_a S_I(\alpha_i, \xi_I;h)>0\\
	&\partial_b S_E(\xi_E, \beta_i)+\partial_a S_I(\beta_i, \xi_I;h)<0.  
	\end{aligned}
\end{equation}
From Lemma \ref{lem:lemma_sus}, one has that 
\begin{equation*}
	S_I(\alpha_i, \xi_I;h)=\sqrt{\mathcal E+h}\left(\|\gamma(\alpha_i)\|+\|\gamma(\xi_I)\|\right) +\frac{\mu}{\sqrt{\mathcal E+h}}\left(F_1(\alpha_i,\xi_I; \mathcal E+h)-\log\left(\frac{\mu}{2(\mathcal E+h)}\right)\right), 
\end{equation*}
where, by the chain rule, $F_1$ is at least $C^1$-bounded uniformly in the first two variables when $h\to\infty$. One has then that 
\begin{equation*}
	\partial_b S_E(\xi_E, \alpha_i)+\partial_a S_I(\alpha_i, \xi_I;h)\geq -C+\sqrt{\mathcal E+h}\frac{d}{d\xi}\|\gamma(\xi)\|_{\xi=\alpha_i}+\frac{\mu}{\sqrt{\mathcal E+h}}\partial_a F_1(\alpha_i, \xi_I; \mathcal E+h); 
\end{equation*}
by virtue of Eq. \eqref{eq:segni_Euclidei}, if $h$ is sufficiently large then the above quantity is positive. With a completely analogous reasoning, one can prove also the second inequality in \eqref{eq:tesi_cs}. \\
We stress that, due to the uniform boundedness of $F_1$ with respect to the endpoints' parameters, the regularity of $\gamma$ and the compactness of $[\alpha_i,\beta_i]$, $i=1, \ldots, m$, one can find a uniform threshold value $h_1$ such that the change-sign property holds. 
}
\end{proof}

\begin{remark}\label{rem:dominant_part}
	From the proof above, one immediately understands that, as far as $h$ is large, the dominant quantity in the derivatives of the total Jacobi length is the one related to the inner arc. In general, this fact holds also for the total length itself and not only for its derivatives. 
\end{remark}

\subsection{Existence of suitable periodic trajectories}
\label{ssec:ex_per_traj}

The outer-arc, inner-arc and change-sign properties are the keystones to prove the existence of a symbolic dynamics for our refraction billiard; for this reason we give the following definition.
\begin{definition}\label{def:D_admiss}
	We say that a domain $D$ is {\em admissible} if $\partial D=\gamma\left([0,L]\right)$, $\gamma\in C^1$, and there exists $\mathcal A \subset [0,L]$ satisfying the inner-arc property, the change-sign property and such that every interval $(a_i, b_i)$ satisfies the outer-arc property.
\end{definition}
The first step to construct a symbolic dynamics is to define an {\em alphabet}, as well as the rules to build {\em admissible words} (see \cite{Dev_book}).
Take then an admissible domain $D$ and let
\begin{equation}\label{eq:def_intervalli}
I^{(i)}\eqdef[\alpha_i,\beta_i], \qquad i \in \mathcal{I},
\end{equation}
where $[\alpha_i,\beta_i]$ have been introduced in Definition \ref{def:change_sign_prop}. 

Let now $n\in \N$, $n\geq1$, and define the set $\mathbb L_n\subset\mathcal I^n$ of the admissible words of length $n$ as 
\begin{equation*}
	\mathbb L_n \eqdef \left\{\underline\ell=\left(\ell_0, \dots, \ell_{n-1}\right) : 	\ell_i\in\mathcal I\text{ and }  \ell_{(i+1)mod\,n}\in NA\left(\ell_i\right), \; i=0, \dots, n-1\right\}, 
\end{equation*}
where the sets $NA\left(\ell_i\right)$ have been introduced in Eq. \eqref{eq:def_not_antipodal}.
Since for any $i \in \mathcal{I}$ the set of not antipodal indices $NA(i)$ is not empty, the set $\mathbb L_n$ is not empty as well.
We are now ready to define the set of {\em admissible finite words}
\begin{equation}\label{eq:LL}
	\mathbb L \eqdef \bigcup_{n\in\N \setminus\{0\}}\mathbb L_n. 
\end{equation}
For any fixed $\underline \ell\in\mathbb L$ we define the $(2n+1)-$dimensional open rectangle
\begin{equation}\label{eq:defU}
	\mathbb{U}_{\underline{\ell}} \eqdef (I_0\times I_0)\times(I_1\times I_1)\times\dots\times(I_{n-1}\times I_{n-1})\times I_0, \quad 
	I_k \eqdef I^{(\ell_k)},
\end{equation}
domain of (the parameters of) the transition points between the inner and outer arcs of the periodic solutions we are searching for. To this purpose, define the closed set
\begin{equation}\label{eq:defSl}
	\mathbb S_{\underline{\ell}} \eqdef \left\{\underline{\xi}=(\xi_0, \dots, \xi_{2n})\in {\mathbb{U}}_{\underline{\ell}}\text{ : }\xi_0=\xi_{2n}\right\}.
\end{equation}
Let us point out that for every $j=0, \dots, n-1$ the points $\xi_{2j}$ and $\xi_{2j+1}$ belong to the same interval $I_j$, while the points $\xi_{2j+1}$ and $\xi_{2j+2}$ belong to possibly different intervals $I_j$ and $I_{(j+1)mod\,n}$ and correspond to non-antipodal points in $\R^2$. 
Since our complete dynamics starts with an outer arc, by virtue of Definitions \ref{def:prop_arco_est} and \ref{def:prop_arco_int} we have that for every $j=0, \dots, n-1$ and $h>0$ sufficiently large
\begin{equation}\label{eq:archiEI}
	\begin{aligned}
		&\exists ! \;z_E(\cdot; \gamma(\xi_{2j}), \gamma(\xi_{2j+1})) && \text{outer arc from $\gamma(\xi_{2j})$ to $\gamma(\xi_{2j+1})$},\\
		&\exists ! \; z_I(\cdot; \gamma(\xi_{2j+1}), \gamma(\xi_{2j+2}); h) && \text{inner {\it (TnT)} arc from $\gamma(\xi_{2j+1})$ to $\gamma(\xi_{2j+2})$ with energy $h$}.
	\end{aligned} 
\end{equation}
The relation between sequences in $\mathbb{S}_{\underline{\ell}}$ and periodic trajectories for the complete dynamics can be built as follows: given $\underline\xi\in\mathbb S_{\underline\ell}$, the corresponding periodic orbit $z(\cdot)$ is the concatenation of the arcs in Eq. \eqref{eq:archiEI}, which is unique. More precisely, one can give the following definition. 
\begin{definition}\label{def:concatenazione}
	Given $n\geq 1$, $\underline \ell\in\mathbb L_n$, $\underline \xi\in \mathbb S_{\underline \ell}$ and $h>h_0$, let us consider the unique arcs listed in Eq. \eqref{eq:archiEI} which connect pairs of subsequent points; with reference to Definitions \ref{def:prop_arco_est} and \ref{def:prop_arco_int}, let us define
	\begin{equation*}
		T_E^{(j)}\eqdef T_E\left(\xi_{2j}, \xi_{2j+1}\right), \quad  T_I^{(j)}\eqdef T_I\left(\xi_{2j+1}, \xi_{2j+2}; h\right), \quad j=0, \dots, n-1. 
	\end{equation*}
	and the partial sums $T^{(j)}\eqdef \sum_{k=0}^j T_E^{(k)}+T_I^{(k)}$. Setting $T^{(-1)}\eqdef0$, consider the concatenation $z\left(\cdot; \underline{\xi}; h\right): \left[0, T^{(n-1)}\right]\to\R^2$ where, for every $j=0, \dots, n-1$, 
	\begin{equation*}
		\begin{cases}
			z\left(s;\underline{\xi}; h\right)\eqdef z_E\left(s-T^{(j-1)}; \gamma\left(\xi_{2j}\right), \gamma\left(\xi_{2j+1}\right)\right)\quad &s\in\left[T^{(j-1)}, T^{(j-1)}+T_E^{(j)}\right]\\ & \\
			z\left(s;\underline{\xi}; h\right)\eqdef z_I\left(s-T^{(j-1)}-T_E^{(j)}; \gamma\left(\xi_{2j+1}\right), \gamma\left(\xi_{2j+2}\right); h\right)\quad &s\in\left[T^{(j-1)}+T_E^{(j)}, T^{(j)}\right] 
		\end{cases}.
	\end{equation*}
	The function $z\left(\cdot;\underline{\xi}; h\right)$ is trivially continuous, and, since $z\left(0;\underline{\xi}; h\right)=z\left(T^{(n-1)};\underline{\xi}; h\right)$, it is also periodic. We can then extend it by periodicity and, with an abuse of notation, suppose $z\left(\cdot;\underline{\xi}; h\right):\R\to \R^2$. 
\end{definition}

We stress that in general such concatenations are not $C^1$.
For $z$ to be an admissible trajectory for the complete dynamics, the Snell's law must be satisfied at every transition point $P_i\eqdef \gamma(\xi_i)$, $i=0, \dots, 2n$, namely, for every $j=0, \dots, n-1$,
\begin{equation}\label{eq:SnellConcatenation}
\footnotesize{	\begin{aligned}
		\sqrt{V_E\left(P_{2j+1}\right)}\frac{z'_E\left(T_E^{(j)}; P_{2j}, P_{2j+1}\right)}{\rule{0pt}{3.5ex}\Big\|z'_E\left(T_E^{(j)}; P_{2j}, P_{2j+1}\right)\Big\|}\cdot \dot\gamma(\xi_{2j+1})=\sqrt{V_I\left(P_{2j+1}\right)}\frac{z'_I\left(0; P_{2j+1}, P_{2j+2}; h\right)}{\rule{0pt}{3.5ex}\Big\|z'_I\left(0; P_{2j+1}, P_{2j+2}; h\right)\Big\|}\cdot \dot\gamma(\xi_{2j+1}), \\
		\sqrt{V_I\left(P_{2j+2}\right)}\frac{z'_I\left(T_I^{(j)}; P_{2j+1}, P_{2j+2}; h\right)}{\rule{0pt}{3.5ex}\Big\|z'_I\left(T_I^{(j)}; P_{2j+1}, P_{2j+2}; h\right)\Big\|}\cdot \dot\gamma(\xi_{2j+2})=\sqrt{V_E\left(P_{2j+2}\right)}\frac{z'_E\left(0; P_{2j+2}, P_{2j+3}\right)}{\rule{0pt}{3.5ex}\Big\|z'_E\left(0; P_{2j+2}, P_{2j+3}\right)\Big\|}\cdot \dot\gamma(\xi_{2j+2}),
	\end{aligned}}
\end{equation}
where, with an abuse of notation, $P_{2n+1}=P_1$. 
Since, fixed $h$ and $\underline\xi$, the concatenation $z\left(\cdot; \underline\xi; h\right)$ is uniquely determined, the validity of conditions $\eqref{eq:SnellConcatenation}$ depends only on the transition points and on the energy. 

Fixed $h>h_0$ and $\underline{\ell} \in \mathbb{L}$, the \emph{total Jacobi length} of $z\left( \cdot;\underline\xi;h \right)$ is the function
$W_{\underline{\ell}}\left(\cdot; h\right): \mathbb S_{\underline{\ell}}\to \R$ defined as 
	\begin{equation}\label{eq:azione}
		W_{\underline{\ell}}(\underline{\xi};h)=\sum_{j=0}^{n-1}S_E(\xi_{2j}, \xi_{2j+1})+\sum_{j=0}^{n-1}S_I(\xi_{2j+1}, \xi_{2j+2}; h),
		\quad n = |\underline{\ell}|,
	\end{equation}
where $S_E$ and $S_I$ has been introduced in Eq. \eqref{eq:defS}.
By means of Eq. \eqref{eq: derivate S}, we have that $\hat{\underline\xi}$ solves $\nabla W_{\underline{\ell}}\left(\hat{\underline\xi}; h\right)=\underline 0$ if and only if $z\left( \cdot;\hat{\underline\xi};h \right)$ satisfies the Snell's law at every transition point. The searches for critical points of $W_{\underline{\ell}}(\cdot; h)$ and for periodic trajectories are then equivalent. 

Provided that $h$ is large enough, the existence of a critical point of $W_{\underline\ell}(\cdot; h)$ is a straightforward consequence of the following classical result. 
\begin{theorem}[Poincar\'e-Miranda Theorem, \cite{Miranda}]\label{thm: miranda}
	Let $F_1, \dots, F_d$  $d$-functions in the variables $(x_1, \dots, x_d)$ continuous on the $d$-dimensional hypercube 	
	\begin{equation*}
		R=\left\{(x_1, \dots, x_d)\,|\,|x_k|\leq L\text{ for every  }k=1, \dots, d\right\}
	\end{equation*}
	and such that for every $k=1, \dots, d$
	\begin{equation}\label{eq: miranda disuguaglianze}
		F_k\left(x_1, \dots, x_d\right)_{|x_k=-L} \cdot
		F_k\left(x_1, \dots, x_d\right)_{|x_k=L}< 0. 
	\end{equation}
	Then there exists at least a solution in $\mathring{R}$ of
	\begin{equation}\label{eq: miranda sistema}
		F_k(x_1, \dots, x_d)=0 \quad\text{for every  }k=1, \dots, d. 
	\end{equation}
\end{theorem}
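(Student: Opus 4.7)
The plan is to prove the Poincaré--Miranda theorem via Brouwer degree theory, exploiting the equivalence with the Brouwer fixed point theorem. The strategy is to show that the sign hypothesis forces a well-defined, nonzero degree of $F$ at $0$ relative to $\mathring{R}$, whence the existence of a zero follows automatically.

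First, I would verify that the strict inequality \eqref{eq: miranda disuguaglianze} forces $F_k$ to have constant sign on each face $\{x_k = \pm L\}$. Indeed, if $F_k$ vanished at some point on the face $x_k = L$, multiplying by $F_k$ at the antipodal point on $x_k = -L$ would give the product $0$, contradicting strict negativity. Since each face is connected and $F_k$ is continuous, $F_k$ admits a well-defined sign $\sigma_k \in \{+1, -1\}$ on $\{x_k = L\}$ and the opposite sign $-\sigma_k$ on $\{x_k = -L\}$. In particular, $F$ does not vanish on $\partial R$, and so the Brouwer degree $\deg(F, \mathring R, 0)$ is well-defined.

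Second, I would introduce the linear reference map $G(x) = (\sigma_1 x_1, \ldots, \sigma_d x_d)$ and the convex homotopy $H(t,x) = (1-t) F(x) + t\, G(x)$, $t \in [0,1]$. On the face $\{x_k = L\}$, the $k$-th component equals $H_k(t,x) = (1-t) F_k(x) + t \sigma_k L$, which is a convex combination of two quantities both of sign $\sigma_k$ (the first by the previous step, the second because $L > 0$), hence is nonzero. The analogous computation holds on $\{x_k = -L\}$. Thus $H(t,\cdot)$ never vanishes on $\partial R$ for any $t \in [0,1]$, so homotopy invariance of Brouwer degree yields
\begin{equation*}
    \deg(F, \mathring R, 0) = \deg(G, \mathring R, 0) = \sigma_1 \cdots \sigma_d \in \{+1, -1\},
\end{equation*}
and the nonvanishing of this degree forces a solution of \eqref{eq: miranda sistema} in $\mathring R$.

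The only delicate step is the very first one, namely using the strictness of the sign condition to deduce constancy of the sign of $F_k$ on each face; with merely non-strict inequalities one would have to approximate $F$ by a perturbation $F_\varepsilon$ satisfying the strict condition and pass to the limit using compactness. I do not foresee a genuine obstacle here, since the whole argument is standard once the degree-theoretic framework is in place; a fully self-contained alternative would replace degree theory with a direct reduction to Brouwer's fixed point theorem applied to a map of the form $x \mapsto x - \varepsilon\,(\sigma_1 F_1(x), \ldots, \sigma_d F_d(x))$ for small $\varepsilon > 0$, where the sign choice ensures that $R$ is invariant.
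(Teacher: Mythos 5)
The paper does not prove this statement at all: it is quoted as a classical result and attributed to Miranda's original paper, so there is no internal proof to compare against. Your degree-theoretic argument is correct and is in fact one of the standard modern proofs of the Poincar\'e--Miranda theorem. The two points that need care are both handled properly: (i) the strict inequality \eqref{eq: miranda disuguaglianze} implies $F_k$ never vanishes on either face $\{x_k=\pm L\}$, and since each face is a connected $(d-1)$-cube, continuity gives a constant sign $\sigma_k$ on $\{x_k=L\}$ and $-\sigma_k$ on $\{x_k=-L\}$; since every boundary point lies on some face, $F$ is nonvanishing on $\partial R$ and $\deg(F,\mathring R,0)$ is defined; (ii) along the convex homotopy to $G(x)=(\sigma_1x_1,\dots,\sigma_dx_d)$, on the face $\{x_k=L\}$ the $k$-th component is a convex combination of two quantities of sign $\sigma_k$ (one of which is nonzero at each $t$), so $H(t,\cdot)$ never vanishes on $\partial R$, and homotopy invariance plus $\deg(G,\mathring R,0)=\sigma_1\cdots\sigma_d=\pm1$ forces a zero of $F$ in $\mathring R$. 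Your closing remark about the alternative reduction to Brouwer via $x\mapsto x-\varepsilon(\sigma_1F_1(x),\dots,\sigma_dF_d(x))$ is the only place where an extra step would be needed if you pursued it (invariance of $R$ requires splitting into points near a face, where $\sigma_kF_k$ is uniformly positive, and points at distance $\geq\delta$ from the face, where smallness of $\varepsilon$ suffices), but since you do not rely on it, the proof as given is complete.
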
 
\begin{proposition}\label{prop: punto critico azione}
	Given $\underline \ell\in\mathbb L$, the total Jacobi length $W_{\underline\ell}(\cdot; h)$ admits a critical point $\hat{\underline\xi}$ in $\mathring{S}_{\underline{\ell}}$  provided $h>h_1$, where $h_1$ is introduced in Definition \ref{def:change_sign_prop}. 
\end{proposition}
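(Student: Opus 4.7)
The plan is to apply the Poincaré--Miranda theorem (Theorem \ref{thm: miranda}) to the gradient $\nabla W_{\underline\ell}(\cdot;h)$, viewed as a vector field on the $2n$-dimensional product of intervals obtained after eliminating the identification $\xi_0=\xi_{2n}$ in $\mathbb S_{\underline\ell}$. Writing $n=|\underline\ell|$ and using the cyclic convention $\xi_{2n+1}=\xi_1$, the $2n$ free variables are $\xi_1,\dots,\xi_{2n-1}$ together with the joint variable $\xi_0=\xi_{2n}$, and each of them ranges in one of the compact intervals $I^{(\ell_j)}=[\alpha_{\ell_j},\beta_{\ell_j}]$ from Definition \ref{def:change_sign_prop}. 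The Poincaré--Miranda framework thus asks us to verify, for each of the $2n$ components of $\nabla W_{\underline\ell}$, that the component has opposite signs on the two opposite faces $\{\xi_k=\alpha_{\ell_j}\}$ and $\{\xi_k=\beta_{\ell_j}\}$ of the box.

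The first step is to compute the gradient componentwise. By Eq.~\eqref{eq:azione} and the derivative formulas of Appendix \ref{sec:appB}, the partial derivative with respect to an odd-indexed variable $\xi_{2j+1}$ (a transition from an outer to an inner arc, where both $\xi_{2j}$ and $\xi_{2j+1}$ sit in the same interval $I^{(\ell_j)}$) is exactly
\[
\partial_b S_E(\xi_{2j},\xi_{2j+1})+\partial_a S_I(\xi_{2j+1},\xi_{2j+2};h),
\]
which matches the expression appearing in the change-sign property with $\xi_E=\xi_{2j}\in I^{(\ell_j)}$ and $\xi_I=\xi_{2j+2}\in I^{(\ell_{j+1})}$, where $\ell_{j+1}\in NA(\ell_j)$ by the definition of $\mathbb L_n$. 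For an even-indexed variable $\xi_{2j+2}$ (a transition from an inner to an outer arc) the derivative reads $\partial_b S_I(\xi_{2j+1},\xi_{2j+2};h)+\partial_a S_E(\xi_{2j+2},\xi_{2j+3})$; using the symmetry $S_E(a,b)=S_E(b,a)$ and $S_I(a,b;h)=S_I(b,a;h)$ in their two spatial endpoints, this equals
\[
\partial_b S_E(\xi_{2j+3},\xi_{2j+2})+\partial_a S_I(\xi_{2j+2},\xi_{2j+1};h),
\]
which is again of the form covered by Definition \ref{def:change_sign_prop} (with the roles of $\xi_E$ and $\xi_I$ suitably relabelled; the non-antipodality hypothesis of that definition is still fulfilled because $\ell_j\in NA(\ell_{j+1})$). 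The joint variable $\xi_0=\xi_{2n}$ contributes both $\partial_a S_E(\xi_0,\xi_1)$ and $\partial_b S_I(\xi_{2n-1},\xi_{2n};h)$, which by the same symmetry trick reorganize into $\partial_b S_E(\xi_1,\xi_0)+\partial_a S_I(\xi_0,\xi_{2n-1};h)$, again of the change-sign form.

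Once each of the $2n$ components of $\nabla W_{\underline\ell}(\cdot;h)$ is rewritten in the canonical form $\partial_b S_E(\cdot,\cdot)+\partial_a S_I(\cdot,\cdot;h)$, the change-sign property (applicable since $h>h_1$) directly gives the Poincaré--Miranda sign condition \eqref{eq: miranda disuguaglianze} on the two faces $\{\xi_k=\alpha_{\ell_j}\}$ and $\{\xi_k=\beta_{\ell_j}\}$ of the box, for any values of the remaining coordinates in their respective intervals $I^{(\ell_\bullet)}$. Continuity of $\nabla W_{\underline\ell}(\cdot;h)$ on $\mathbb S_{\underline\ell}$ follows from the regularity of the outer and inner arcs with respect to their endpoints guaranteed by the outer- and inner-arc properties. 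Applying Theorem \ref{thm: miranda} then produces the required interior critical point $\hat{\underline\xi}\in\mathring{\mathbb S}_{\underline\ell}$.

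The main obstacle is purely bookkeeping: one must carefully track the alternation outer--inner--outer--\dots along the word and use the spatial symmetries of $S_E$ and $S_I$ so that \emph{every} component of the gradient, including that of the cyclic variable $\xi_0=\xi_{2n}$, is put into the precise form in which Definition \ref{def:change_sign_prop} supplies the required sign. Apart from this repackaging, no new analytic estimate is needed beyond those already subsumed in the change-sign property and in Remark \ref{rem:dominant_part}.
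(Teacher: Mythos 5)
Your proposal is correct and follows essentially the same route as the paper: both apply the Poincar\'e--Miranda theorem to the components of $\nabla W_{\underline\ell}(\cdot;h)$ on the $2n$-dimensional box $\prod_{i}(I_i\times I_i)$, with the sign condition on opposite faces supplied by the change-sign property. Your explicit use of the endpoint symmetry of $S_E$ and $S_I$ to put the even-indexed (inner-to-outer) components into the canonical form of Definition \ref{def:change_sign_prop} is a detail the paper leaves implicit, but it is the same argument.
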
 
\begin{proof}
	This proof relies on a direct application of Poincar\'e-Miranda Theorem: let us fix $\underline\ell\in\mathbb L$ and set $d\eqdef 2|\underline\ell|=2n$ and $R \eqdef \Pi_{i=0}^{n-1}\left({I_i}\times{I}_i\right)$, where ${I}_i =[\alpha_{\ell_i},\beta_{\ell_i}]$. For every $\underline\xi\in\mathbb S_{\underline\ell}$, define 
	\begin{equation*}
		\begin{aligned}
			&F_{2i}\left(\underline{\xi}\right)=\partial_b S_I\left(\xi_{2i-1}, \xi_{2i}; h\right)+\partial_aS_E\left(\xi_{2i}, \xi_{2i+1}\right), \\
			&F_{2i+1}\left(\underline{\xi}\right)=\partial_b S_E\left(\xi_{2i}, \xi_{2i+1}\right)+\partial_aS_I\left(\xi_{2i+1}, \xi_{2i+2}; h\right), \quad  i=0, \dots, n-1
		\end{aligned}
	\end{equation*}	
	where $\xi_{-1}=\xi_{2n-1}$. Computing the above functions on the hypercube's edges, one has that
	\begin{equation}
		\begin{aligned}
			F_{2i}\left(\underline\xi\right)_{|\xi_{2i}=\alpha_{\ell_i}}=\partial_b S_I\left(\xi_{2i-1},\alpha_{\ell_i} \right)+\partial_a S_E\left(\alpha_{\ell_i}, \xi_{2i+1}\right), \\
			F_{2i+1}\left(\underline\xi\right)_{|\xi_{2i+1}=\alpha_{\ell_i}}=\partial_b S_E\left(\xi_{2i},\alpha_{\ell_i} \right)+\partial_a S_I\left(\alpha_{\ell_i}, \xi_{2i+2}\right),
		\end{aligned}
	\end{equation}
	and similarly for the right edges.
	If $h>h_1$, the change-sign property ensures the application of Poincar\'e-Miranda Theorem to obtain $\hat{\underline\xi}\in\mathring{\mathbb S}_{\underline\ell}$ such that $\nabla W_{\underline\ell}\left(\hat{\underline\xi}; h\right)=0$.
\end{proof}
The main result of this section now follows straightforwardly. 
\begin{theorem}\label{thm:SymDyn}
	Given $\underline\ell\in\mathbb L$ and $h>h_1$, there exists $\hat{\underline\xi}\in\mathring{\mathbb S}_{\underline\ell}$ and a periodic trajectory $z\left(\cdot; \hat{\underline\xi}; h\right)$ for the complete dynamics which realizes the word $\underline\ell$, in the sense that it connects $\gamma\left(\hat\xi_0\right), \dots, \gamma\left(\hat\xi_{2n-1}\right)$. 
\end{theorem}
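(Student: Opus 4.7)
The proof is essentially a variational assembly: we invoke Proposition \ref{prop: punto critico azione} to locate a critical point of the Jacobi length $W_{\underline{\ell}}(\cdot;h)$ and then translate the vanishing of its gradient into Snell's law at each transition point, which upgrades the broken concatenation of Definition \ref{def:concatenazione} to an admissible trajectory of the complete dynamics.

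The plan is as follows. Fix $\underline{\ell}\in\mathbb{L}$ of length $n$ and $h>h_1$. First, I apply Proposition \ref{prop: punto critico azione} to obtain a point $\hat{\underline{\xi}}=(\hat\xi_0,\dots,\hat\xi_{2n})\in\mathring{\mathbb{S}}_{\underline{\ell}}$ satisfying $\nabla W_{\underline{\ell}}(\hat{\underline{\xi}};h)=\underline{0}$. Since $\hat{\underline{\xi}}$ lies in the interior of $\mathbb{S}_{\underline{\ell}}$, each pair $(\hat\xi_{2j},\hat\xi_{2j+1})\in I^{(\ell_j)}\times I^{(\ell_j)}$ falls in the range where the outer-arc property applies, while each pair $(\hat\xi_{2j+1},\hat\xi_{2j+2})$ belongs to two (possibly identical) non-antipodal intervals, so the inner-arc property yields a unique \tnt{} inner arc. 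This grants existence and uniqueness of all the pieces listed in Eq.~\eqref{eq:archiEI}, hence the concatenation $z(\cdot;\hat{\underline{\xi}};h)$ from Definition \ref{def:concatenazione} is well-defined, continuous, and periodic.

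Next I verify that $z(\cdot;\hat{\underline{\xi}};h)$ is an admissible trajectory, i.e.~that Snell's law \eqref{eq:SnellConcatenation} holds at every $P_k=\gamma(\hat\xi_k)$. This is the key bridge between the variational and the dynamical formulations, and it is provided by the formulas recalled in Eq.~\eqref{eq: derivate S} of Appendix \ref{sec:appB}: the partial derivatives $\partial_a S_E,\partial_b S_E,\partial_a S_I,\partial_b S_I$ at an endpoint $\gamma(\xi)$ equal, up to sign, $\sqrt{V_{E/I}(\gamma(\xi))}$ times the cosine of the angle between the velocity vector of the corresponding arc and $\dot\gamma(\xi)$. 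Writing out the gradient $\nabla W_{\underline{\ell}}(\hat{\underline{\xi}};h)=\underline 0$ component by component, the equation associated to the inner transition variable $\hat\xi_{2j+1}$ gives
\[
\partial_b S_E(\hat\xi_{2j},\hat\xi_{2j+1})+\partial_a S_I(\hat\xi_{2j+1},\hat\xi_{2j+2};h)=0,
\]
which is exactly the first line of \eqref{eq:SnellConcatenation} at $P_{2j+1}$; the equation for $\hat\xi_{2j+2}$ gives the second line, i.e.~Snell's law at $P_{2j+2}$. Therefore every transition point satisfies the refraction law, and $z(\cdot;\hat{\underline{\xi}};h)$ is a bona fide solution of the refraction billiard.

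Finally, the word $\underline{\ell}$ is realized by construction: since $\hat\xi_{2j},\hat\xi_{2j+1}\in I^{(\ell_j)}\subset(a_{\ell_j},b_{\ell_j})$, two consecutive crossings of $\partial D$ occur in $\gamma((a_{\ell_j},b_{\ell_j}))$ in the order prescribed by $\underline{\ell}$, matching Definition \ref{def:realize}. The periodicity $\hat\xi_0=\hat\xi_{2n}$ built into $\mathbb{S}_{\underline{\ell}}$ together with the $C^0$ periodic extension in Definition \ref{def:concatenazione} close the orbit. No real obstacle arises at this step since the heavy lifting, namely the sign-change inequalities enabling Poincar\'e--Miranda, has already been performed in Proposition \ref{prop: punto critico azione}; the only point demanding care is the correct identification of the first-order conditions of $W_{\underline{\ell}}$ with the Snell reflection-refraction law, which is purely book-keeping once Eq.~\eqref{eq: derivate S} is in hand.
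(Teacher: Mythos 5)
Your proposal is correct and follows exactly the route the paper takes: the paper states that the theorem ``follows straightforwardly'' from Proposition \ref{prop: punto critico azione}, having already observed (via Eq.~\eqref{eq: derivate S}) that $\nabla W_{\underline\ell}(\hat{\underline\xi};h)=\underline 0$ is equivalent to Snell's law holding at every transition point. Your write-up simply makes explicit the bookkeeping that the paper leaves implicit.
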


\subsection{Existence of suitable fixed-ends trajectories}\label{ssec:fixed_ends}

The same techniques used in Section \ref{ssec:ex_per_traj} to find periodic solutions can be applied to the construction of fixed-ends trajectories realizing admissible words. Although very similar to Theorem \ref{thm:SymDyn}, the final result of the current section will be of crucial importance in Section \ref{sec:non_int_chaos}, where the analytic non-integrability of our model is proved. \\
To begin our construction, let us consider some slight modifications of the sets $\mathbb L_n$ and $\mathbb S_{\underline\ell}$: in particular, for any $n\geq 2$, let 
\begin{equation*}
	\begin{aligned}
		&\mathbb L'_n=\left\{\underline\ell=\left(\ell_0, \dots, \ell_{n-1}\right) : 	\ell_i\in\mathcal I\text{ and } \ell_{i+1}\in NA\left(\ell_i\right),\; i=0, \dots, n-2\right\}, \\
		&\mathbb{U'}_{\underline{\ell}}=(I_0\times I_0)\times(I_1\times I_1)\times\dots\times(I_{n-2}\times I_{n-2})\times I_{n-1}, \\
		&\mathbb S'_{\underline{\ell}}=\left\{\underline{\xi}=(\xi_0, \dots, \xi_{2n-2})\in{\mathbb{U}}_{\underline{\ell}}\text{ : }\xi_0= \xi_a, \,\xi_{2n-2}=\xi_b\right\},
	\end{aligned}
\end{equation*}
where $\xi_a$ ad $\xi_b$ are fixed respectively in $I_0$ and $I_{n-1}$. It is easy to prove that 
\begin{equation*}
	\mathbb L \subseteq \mathbb L' = \bigcup_{n\in\N \setminus\{0\}}\mathbb L'_n,  
\end{equation*}
with $\mathbb L$ defined as in Eq. \eqref{eq:LL}. \\
Let us remark that, in this setting, we have $2n-3$ free points $\xi_1, \ldots,\xi_{2n-3}$. As a consequence, given a sequence $\underline \xi\in\mathbb S'_{\underline \ell}$, the total Jacobi length takes the form
\begin{equation}\label{eq:azione_fixed}
	W'_{\underline{\ell}}(\underline{\xi};h)=\sum_{j=0}^{n-2}S_E(\xi_{2j}, \xi_{2j+1})+\sum_{j=0}^{n-2}S_I(\xi_{2j+1}, \xi_{2j+2}; h). 
\end{equation}
In this framework, Definition \ref{def:concatenazione} and Eq. \eqref{eq:SnellConcatenation} are the same, with straightforward modifications in the indices. Again, critical points of the total Jacobi length correspond to admissible trajectories for our fixed-ends dynamics.
\begin{proposition}\label{prop:fixed_ends}
Let $\underline\ell\in\mathbb L$, $h > h_1$, $\xi_a\in I_0$ and $\xi_b\in I_{n-1}$ with $n\eqdef|\underline \ell|$.  Then there exists a trajectory which connects $\gamma\left(\xi_a\right)$ to $\gamma\left(\xi_b\right)$ realizing the word $\underline \ell$. 
\end{proposition}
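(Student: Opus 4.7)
The strategy is a direct adaptation of the argument used for Proposition \ref{prop: punto critico azione} and Theorem \ref{thm:SymDyn}: translate the fixed-ends problem into finding a critical point of $W'_{\underline\ell}(\cdot;h)$ on $\mathbb S'_{\underline\ell}$, and produce such a critical point via the Poincar\'e--Miranda theorem applied to the $(2n-3)$-dimensional hypercube
$$R' \eqdef I_0 \times (I_1 \times I_1) \times (I_2 \times I_2) \times \cdots \times (I_{n-2}\times I_{n-2}) \times I_{n-1},$$
where the first and last factors account for the free parameters $\xi_1$ and $\xi_{2n-3}$ only (the endpoints $\xi_0=\xi_a$ and $\xi_{2n-2}=\xi_b$ being frozen).

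First, I would verify the variational characterization: arguing as in the derivation of \eqref{eq:SnellConcatenation} from \eqref{eq: derivate S}, a point $\hat{\underline\xi}\in\mathring{\mathbb S}'_{\underline\ell}$ is critical for $W'_{\underline\ell}(\cdot;h)$ if and only if the concatenation $z(\cdot;\hat{\underline\xi};h)$ of inner and outer arcs prescribed by \eqref{eq:archiEI} satisfies Snell's law at every interior transition point. Since $\xi_a, \xi_b$ are fixed, no matching condition is required there, so the resulting trajectory is an admissible fixed-ends solution realizing the word $\underline\ell$.

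Next, I would set up the Poincar\'e--Miranda system: for $k=1,\ldots,2n-3$ define
$$F_k(\underline\xi) \eqdef \partial_{\xi_k} W'_{\underline\ell}(\underline\xi;h),$$
which, as in the periodic case, splits as a sum of one outer-arc derivative and one inner-arc derivative according to the parity of $k$. Explicitly, for odd $k=2i+1$ one has $F_k = \partial_b S_E(\xi_{2i},\xi_{2i+1}) + \partial_a S_I(\xi_{2i+1},\xi_{2i+2};h)$, while for even $k=2i$ one has $F_k = \partial_b S_I(\xi_{2i-1},\xi_{2i};h) + \partial_a S_E(\xi_{2i},\xi_{2i+1})$. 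When $\xi_k$ is set to $\alpha_{\ell_{\lfloor k/2\rfloor}}$ or $\beta_{\ell_{\lfloor k/2\rfloor}}$, the change-sign property of Definition \ref{def:change_sign_prop} guarantees, for $h>h_1$, the opposite-sign condition \eqref{eq: miranda disuguaglianze}. Crucially, the change-sign property is stated uniformly over the companion parameter in $\bigcup_{j\in NA(\ell_{\lfloor k/2\rfloor})}[\alpha_j,\beta_j]$, which also covers the case in which the companion is the frozen endpoint $\xi_a\in I_0$ or $\xi_b\in I_{n-1}$, so no new input is needed at the boundary of $R'$.

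Applying Theorem \ref{thm: miranda} then yields $\hat{\underline\xi}\in\mathring{\mathbb S}'_{\underline\ell}$ with $\nabla W'_{\underline\ell}(\hat{\underline\xi};h)=0$, and the associated concatenation $z(\cdot;\hat{\underline\xi};h)$ is the desired fixed-ends trajectory realizing $\underline\ell$. The only genuine point requiring care is checking that the Euclidean change-sign estimates used in the proof of Proposition \ref{prop:cs_geom} are insensitive to whether the neighbouring node is free or frozen; but since $\xi_a\in[\alpha_0,\beta_0]$ and $\xi_b\in[\alpha_{n-1},\beta_{n-1}]$, the same uniform bound on $|\partial_b S_E|$ and the dominance of the inner term (Remark \ref{rem:dominant_part}) apply verbatim, so the sign conditions are preserved and the argument closes.
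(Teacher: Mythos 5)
Your argument is essentially identical to the paper's proof: both reduce the fixed-ends problem to finding a zero of $F_k=\partial_{\xi_k}W'_{\underline\ell}(\cdot;h)$, $k=1,\dots,2n-3$, via the Poincar\'e--Miranda theorem on the product of the intervals containing the free nodes, with the change-sign property supplying the sign conditions on the faces. The only slip is in your displayed hypercube: since $\xi_{2n-3}\in I_{n-2}$ (only the frozen endpoint $\xi_{2n-2}=\xi_b$ lies in $I_{n-1}$), the correct domain is $R=I_0\times(I_1\times I_1)\times\cdots\times(I_{n-2}\times I_{n-2})$, of dimension $2n-3$, whereas your $R'$ carries a spurious final factor $I_{n-1}$ and hence has dimension $2n-2$; the rest of your argument (the list of free variables and the functions $F_k$) is already consistent with the corrected $R$.
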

\begin{proof}
	The proof is completely analogous of the one of Theorem \ref{thm:SymDyn}, setting $d\eqdef2n-3$, 
	\begin{equation*}
		\begin{aligned}
			&R\eqdef I_0\times \left(I_1\times I_1\right)\times \ldots \times \left(I_{n-2}\times I_{n-2}\right),
			\quad F_i(\underline\xi)\eqdef \frac{\partial W'_{\underline\ell}}{\partial\xi_i }\left(\underline\xi;h\right)
		\end{aligned}
	\end{equation*}
	for every $i=1, \ldots, 2n-3$. 
\end{proof}

\subsection{Construction of the symbolic dynamics}\label{ssec:dinamicaSimbolica}
We are finally ready to prove the existence of a symbolic dynamics for our refractive billiard model; in particular, we will construct a surjective and continuous application between a suitable set of initial conditions of trajectories and admissible bi-infinite words.
To this end, let us define the energy shell for the external dynamics
\[
\Xi \eqdef 
\left\{ 
(\xi,v) \colon \xi \in [0,L], v \in \R^2, \frac12 \|v\|^2 -V_E(\gx)=0 
\right\}.
\]
We now introduce the sets of initial conditions in $\Xi$ for which
the parameter $\xi$ belongs to a fixed ${I^{(r)}}$ and
the velocity vector points respectively outward or inward the domain $D$, namely,
\[
\Xi^+_r \eqdef 
\left\{ 
(\xi,v) \in \Xi \colon \xi \in {I^{(r)}} \; \text{and} \;  \langle n(\xi),v\rangle>0 
\right\},
\qquad 
\Xi^-_r \eqdef 
\left\{ 
(\xi,v) \in \Xi \colon \xi \in {I^{(r)}} \; \text{and} \;  \langle n(\xi),v\rangle<0 
\right\},
\]   
where $n(\xi)$ is the outward-pointing normal unit vector to $\gamma$ in $\gamma(\xi)$. \\
Since in our model a crossing through $\gamma$ implies a refraction of the trajectory, it is convenient to define analytically a {\em refraction map} which of course depends on the parameter $\xi$ and the energy jump $h$. From now on we assume $h> h_1$, where $ h_1$ is the threshold value introduced in Definition \ref{def:change_sign_prop} and used in Theorem \ref{thm:SymDyn}.
\begin{definition}
	Fixed $h$ and $\xi \in [0,L]$, we define the sets 
	\[
	B_E^\xi \eqdef \left\{ v \in \R^2 \colon \|v\|^2 = 2V_E(\gx) \right\}
	\qquad
	B_I^\xi \eqdef \left\{ v \in \R^2 \colon \|v\|^2 = 2V_I(\gx) \right\}
	\]
	and the refraction map
	\[
	\begin{split}
		R_{EI}(\cdot;\xi,h) \colon B_E^\xi & \to B_I^\xi \\
		v = a \,t(\xi) + b \, n(\xi) & \mapsto R_{EI}(v;\xi,h) = a\,t(\xi) + \mathop{sgn}(b) \sqrt{2V_I(\gx)-a^2}\,n(\xi)
	\end{split}
	\]
	where we recall that  $t(\xi)$ is the tangent unit vector to $\gamma$ in $\gamma(\xi)$.	
\end{definition}

\begin{remark}
	Some words are due to understand the previous definition. First of all, we observe that for every $\xi \in [0,L]$ the vectors $(t(\xi),n(\xi))$ form a orthonormal basis of the plane. Moreover, since $v \in B_E^\xi$ (namely, $a^2+b^2 = 2V_E(\gx)$) and $V_I > V_E$ everywhere in the punctured plane, the quantity $2V_I(\gx)-a^2$ is always positive, so that the refraction map is well defined in its domain. Furthermore, it is clear that the normal component preserves the sign, so that the image of an outward (resp. inward)-pointing vector is still outward (resp. inward)-pointing. It can also be easily proved that given a vector $v \in B_E^\xi$, its image $R_{EI}(v;\xi,h)$ is the unique vector with which $v$ satisfies the refraction Snell's law \eqref{eq:SnellIntro}.
\end{remark}

The refraction map $R_{EI}$ is clearly injective, hence invertible on its image
\[
\tilde B_I^\xi \eqdef \left\{ v \in \R^2 \colon \|v\|^2 = 2V_I(\gx) \text{ and } \langle v,t(\xi)\rangle \leq \sqrt{2V_E(\gx)} \right\}.
\]

\begin{remark}\label{rem:crit_value}
In view of Eq. \eqref{eq:angolocritico}, to take $ v \in \tilde B_I^\xi$ corresponds to require that the angle between $v$ and $n(\xi)$ is less than the critical value $\alpha_{crit}$.
\end{remark}

Let us now fix $(\xi,v) \in \Xi^+_r$ and follow step by step the trajectory of the complete dynamics starting from the initial condition $(\gx,v)$. Assumptions on the allowed  initial conditions will become more and more restrictive as the dynamics 
proceeds in order to obtain a final set 
\begin{equation} \label{eq:setX}
	X \subseteq \bigcup_{r \in \mathcal{I}} \Xi^+_r
\end{equation}
for which it is possible to construct a symbolic dynamics.

First of all, let us consider the flow $\Phi^s_E (\gx,v)$ generated by the Cauchy problem associated to the outer potential. 
As customary dealing with this kind of systems, we consider the projections of such flow onto the configuration and the velocity space respectively:
\[
\Pi_z \Phi^s_E (\cdot,\cdot),
\qquad 
\Pi_v \Phi^s_E (\cdot,\cdot).
\]
Given an initial condition $(\gamma(\xi),v)$, we can define the set
\begin{equation}\label{eq:T-}
	\mathbb{T}^-(\xi,v) \eqdef
	\left\lbrace 
	s_1>0\;\middle\lvert\ 
	\begin{aligned} 
		& \Phi^{s_1}_E (\gx,v) =(\gamma(\xi_1),v_1) \text{ for some } (\xi_1,v_1) \in \Xi_r^- \\ 
		& \Pi_z \Phi^{s}_E (\gx,v) \notin \overline D  \text{ for every } s \in (0,s_1)
	\end{aligned}
	\right\rbrace.
\end{equation}
which contains at most one element.
In view of the outer-arc property, there holds
\[
\left\{ (\xi,v) \in \Xi^+_r \colon \mathbb{T}^-(\xi,v)  \neq \emptyset \right\} \neq \emptyset \quad \text{for all } r \in \mathcal{I}.
\]
Let us now suppose that $\mathbb{T}^-(\xi,v) =\{s_1\}$ and call 
\[
(\gamma(\xi_1),v_1) \eqdef \Phi^{s_1}_E (\gx,v).
\]
To proceed with the inner dynamics we need to refract our arc hence we define $(\gamma(\xi_1),v'_1) \eqdef (\gamma(\xi_1),R_{EI}(v_1;\xi_1,h))$  to start with an inner arc. We consider  the flow associated to the inner problem, $\Phi^s_I (\gamma(\xi_1),v'_1)$ 
and the set
\begin{equation}\label{eq:T+}
	\mathbb{T}^+(\xi,v) \eqdef
	\left\lbrace 
	s_2>0\;\middle\lvert\ 
	\begin{aligned} 
		& \Phi^{s_2}_I (\gamma(\xi_1),v'_1) =(\gamma(\xi_2),v'_2) \text{ for some } (\xi_2,v'_2) \in \bigcup_{r' \in NA(r)}\Xi_{r'}^+ \\ 
		& v_2' \in \tilde B_I^{\xi_2} \\ 
		& \Pi_z \Phi^{s}_I (\gamma(\xi_1),v'_1) \in D \text{ for every } s \in (0,s_2)
	\end{aligned}
	\right\rbrace.
\end{equation}
Once more, $\mathbb{T}^+(\xi,v)$ has at most one element and we can extend its definition to every pair $(\xi,v) \in \Xi^+_r$ by requiring that 
\[
\mathbb{T}^-(\xi,v) = \emptyset 
\quad \implies \quad 
\mathbb{T}^+(\xi,v) = \emptyset.
\]
In view of Theorem \ref{thm:SymDyn}, there holds again that 
\[
\left\{ (\xi,v) \in \Xi^+_r \colon \mathbb{T}^+(\xi,v)  \neq \emptyset \right\} \neq \emptyset \quad \text{for all } r \in \mathcal{I};
\]
indeed, it is sufficient to consider a word $\underline \ell$ of length at least 2 with the first element equal to $r$ and take the initial condition of the corresponding trajectory. \\
Let now $(\xi,v) \in \Xi^+_r$ be such that $\mathbb{T}^+(\xi,v) \neq \emptyset$ and let $(\xi_2,v_2')$ as in Eq. \eqref{eq:T+}. As $v_2' \in \tilde B_I^{\xi_2}$ we are allowed to define 
\begin{equation} \label{eq:v2}
	v_2 \eqdef R^{-1}_{EI}(v_2';\xi_2,h)
\end{equation}
so that $(\xi_2,v_2) \in \Xi^+_{r'}$ and we have the initial condition for a second outer arc.

\begin{figure}
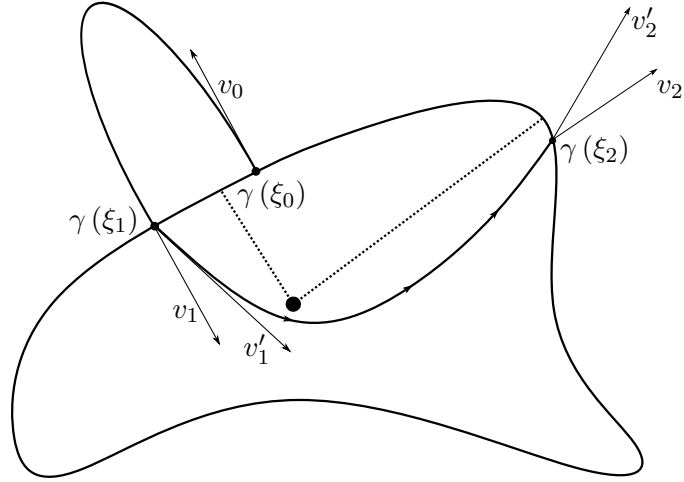

	\centering
	\begin{overpic}[width=0.5\linewidth]{"immagini/dinamicasimbolica2"}
		\put (35, 43){\small$\gamma\left(\xi_0\right)$}
		\put (32, 60){\small$v_0$}
		\put (25, 25){\small$v_1$}
		\put (36, 19){\small$v'_1$}
		\put (9, 39){\small$\gamma\left(\xi_1\right)$}
		\put (85, 50){\small$\gamma\left(\xi_2\right)$}
		\put (96, 70){\small$v'_2$}
		\put (100, 60){\small$ v_2$}
	\end{overpic}
	\caption{An example of concatenation starting from a point $(\gamma(\xi_0),v_0)$, with $(\xi_0,v_0)\in A$.}
	\label{fig:dinamica-simbolica}
\end{figure}

The following non-empty set contains pairs $(\xi,v)$ for which it is possible to construct a complete concatenation outer-inner arc and where a first return map is well defined.
\begin{definition}\label{def:setA}
	Let us define the set $A\subset[0,L]\times\R^2$ as the set of pairs $(\xi,v)$ such that
	\begin{enumerate}
		\item $(\xi,v) \in \Xi^+_{r}$ for some $r \in \mathcal{I}$;
		\item there exist $s_1 \in \mathbb{T}^-(\xi,v)$ and $s_2 \in \mathbb{T}^+(\xi,v)$;
	\end{enumerate}
	and the first return map
	\begin{equation} \label{eq:mapF}
		F \colon A \to F(A), \qquad F(\xi,v) \eqdef(\xi_2,v_2)
	\end{equation}
	where $\xi_2$ and $v_2$ are introduced respectively in \eqref{eq:T+} and \eqref{eq:v2}, see Figure \ref{fig:dinamica-simbolica}.
\end{definition}
The function $F$ is clearly a bijection and, by recurrence, we construct the set $X$ as the set on which all the positive and negative iterates of $F$ are defined. Let
\[
X^+_1 \eqdef A
\quad \text{and} \quad 
X^-_1 \eqdef A \cap F(A) =  X^+_1 \cap F(X^+_1 )
\]
and observe that $X^-_1 \neq \emptyset$ (by Theorem \ref{thm:SymDyn}); moreover both $F$ and $F^{-1}$ are well defined on $X^-_1$.
Then, for any positive integer $k \geq 2$, we introduce the non-empty sets 
\[
X^+_k \eqdef X^-_{k-1}\cap F^{-1}(X^-_{k-1})
\quad \text{and} \quad 
X^-_k \eqdef X^+_k \cap F(X^+_k)
\]
so that on $X^-_k $ the iterates $F^j$, with $j \in \Z$ and $|j| \leq k$, are well defined (in our notation $F^0 = \mathrm{Id}$). 
We are then ready to define the set of initial conditions in $\Xi^+_r$ that generate trajectories with an infinite number of transversal intersections with $\partial D$, namely
\[
X \eqdef \bigcap_{k \in \N}X^-_k.  
\]    
By virtue of Theorem \ref{thm:SymDyn}, the set $X$ is non empty, and on $X$ are defined the iterates $F^j$, for any integer $j \in \Z$.
The set $X$ is the set of initial conditions which generate trajectories for the complete dynamics that cross $\partial D$ an infinite
number of times; hence it is invariant for the first return map $F$ an then it is convenient to consider the restriction
\[
\mathcal{F} \eqdef F|_X
\]

Let us now consider the set of bi-infinite admissible words
\begin{equation}
	\mathcal{L} \eqdef 
	\left\{ \underline{\ell} \in \mathcal{I}^{\Z} \colon \ell_{j+1} \in NA(\ell_j), \,\forall j \in \Z \right\}
\end{equation}
endowed with the metric
\begin{equation}\label{eq:distL}
	d(\underline\ell,\underline{m}) \eqdef \sum_{k \in \Z} \frac{\rho(\ell_k,m_k)}{4^{|k|}},
\end{equation}
where $\rho(i,j)=0$ if $i=j$ and $\rho(i,j)=1$ if $i\neq j$;
we refer to the book \cite{MR1995704} for a complete treatment on the subject.
It is straightforward to prove that, with this metric, the subset of the periodic bi-infinite words
\[
\mathbb{L}_P  \eqdef 
\left\{ \underline{\ell} \in \mathcal{L} \colon \underline\ell \text{ is periodic} \right\}
\]  
is dense in $\mathcal{L}$. Furthermore, we observe that the elements of $\mathbb{L}_P$ are the periodic extensions of the finite admissible words of the set $\mathbb{L}$ introduced in Eq. \eqref{eq:LL}.

We now introduce the map $\chi \colon X \to \mathcal{I}$ such that 
\begin{equation}\label{eq:defchi}
	\chi(\xi,v) = r \Longleftrightarrow \xi \in {I^{(r)}}
\end{equation}
and the projection map 
\[
\pi \colon X \to \mathcal{L}, 
\quad  
(\xi,v) \mapsto \pi(\xi,v)=(\ell_j)_{j \in \Z}
\quad \text{with } \ell_j = \chi ({\mathcal F}^j(\xi,v))
\]
through which we are able to associate to every $(\xi, v)$ the bi-infinite word realized by the trajectory of initial condition $(\gx,v)$. \\
We can then consider the commutative diagram
\[
\begin{tikzcd}
	X \arrow{r}{\mathcal{F}} \arrow{d}{\pi} & X \arrow{d}{\pi} \\
	\mathcal{L} \arrow{r}{\sigma_r}	& \mathcal{L}
\end{tikzcd}
\]
where $\sigma_r$ is the Bernoulli right-shift. In order to prove Theorem \ref{thm:ex_dyn_sym_intro}, we are left to show that the map $\pi$ is a continuous surjection. 

\begin{proposition}\label{prop:pi_suriettiva}
	If $h > h_1$, the projection map $\pi$ is surjective. 
\end{proposition}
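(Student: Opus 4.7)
The plan is to realize any bi-infinite admissible word by a diagonal/compactness argument, starting from the finite-word existence results already proved, namely Theorem \ref{thm:SymDyn} and Proposition \ref{prop:fixed_ends}. Fix $\underline{\ell}=(\ell_j)_{j\in\Z}\in\mathcal{L}$. For every integer $N\geq 1$, the finite word $\underline{\ell}^{(N)}\eqdef(\ell_{-N},\ell_{-N+1},\dots,\ell_{N})$ lies in $\mathbb{L}'_{2N+1}$, so by Proposition \ref{prop:fixed_ends} (applied, e.g., with arbitrary fixed endpoints $\xi_a\in I^{(\ell_{-N})}$, $\xi_b\in I^{(\ell_N)}$) there exists $\hat{\underline{\xi}}^{(N)}\in\mathring{\mathbb{S}}'_{\underline{\ell}^{(N)}}$ whose associated concatenation $z^{(N)}(\cdot)\eqdef z(\cdot;\hat{\underline{\xi}}^{(N)};h)$ is a genuine trajectory of the complete dynamics realizing $\underline{\ell}^{(N)}$, in the sense of Definition \ref{def:realize}.

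For each such trajectory, record the initial condition on the outer arc corresponding to the symbol $\ell_0$: let $\hat\xi^{(N)}_0$ be the transition parameter at which the $\ell_0$-th outer arc begins and let $v^{(N)}_0$ be the outgoing velocity at $\gamma(\hat\xi^{(N)}_0)$. By construction $(\hat\xi^{(N)}_0,v^{(N)}_0)\in\Xi^+_{\ell_0}$ and the transition parameter lies in the compact interval $I^{(\ell_0)}=[\alpha_{\ell_0},\beta_{\ell_0}]$. The velocity lies on the compact energy shell $\{\|v\|^2=2V_E(\gamma(\hat\xi^{(N)}_0))\}$, so the pair $(\hat\xi^{(N)}_0,v^{(N)}_0)$ ranges in a compact subset of $[0,L]\times\R^2$. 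Extract a subsequence, still labeled by $N$, converging to some $(\xi^\star,v^\star)\in\Xi^+_{\ell_0}$ with $\xi^\star\in I^{(\ell_0)}$.

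The main step is to show that $(\xi^\star,v^\star)\in X$ and $\pi(\xi^\star,v^\star)=\underline{\ell}$. Fix any $j\in\Z$. For all $N>|j|$ the $j$-th iterate $F^j(\hat\xi^{(N)}_0,v^{(N)}_0)$ is well-defined, and the corresponding pair of transition parameters $(\hat\xi^{(N)}_{2j-1},\hat\xi^{(N)}_{2j})$ lies in the compact rectangle $I^{(\ell_{j-1})}\times I^{(\ell_j)}$, with the intermediate inner and outer arcs being non-degenerate solutions of the fixed-ends problems \eqref{eq:bolzaext_intro} and \eqref{eq: problema interno}. Continuous dependence on the data of the outer flow $\Phi_E^s$, the inner flow $\Phi_I^s$, and the refraction map $R_{EI}$ — together with the uniqueness clauses in the outer-arc and inner-arc properties — propagates the convergence forward and backward: for every $j$, $F^j(\xi^\star,v^\star)$ is defined and its base parameter belongs to $I^{(\ell_j)}$, which implies $\chi(\mathcal F^j(\xi^\star,v^\star))=\ell_j$ via \eqref{eq:defchi}. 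This simultaneously yields $(\xi^\star,v^\star)\in X$ and $\pi(\xi^\star,v^\star)=\underline{\ell}$, proving surjectivity.

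The delicate point — and in our view the main obstacle — is precisely the passage from pointwise to trajectory convergence: one must verify that the limit arcs remain transversal to $\partial D$ (so that $\mathbb{T}^\pm$ remain singletons in a neighborhood), that the inner pieces do not degenerate into the critical refraction angle of Eq. \eqref{eq:angolocritico} (ensured because the interior critical points produced by the Poincaré–Miranda Theorem lie in the open rectangles $\mathring{\mathbb{S}}'_{\underline{\ell}^{(N)}}$, uniformly away from the boundary of the hypercube thanks to the change-sign property), and that the selected inner arcs remain in the correct \tnt\ homotopy class in the limit. All these facts follow from the openness of the non-antipodality, transversality, and $\lsc{}$ conditions on the compact intervals $I^{(\ell_j)}$, combined with the uniqueness statements in Definitions \ref{def:prop_arco_est} and \ref{def:prop_arco_int} applied uniformly in $N$.
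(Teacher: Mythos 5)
Your proposal is correct in substance and follows the same overall strategy as the paper: truncate the bi-infinite word, realize each finite truncation by the Poincar\'e--Miranda construction, and pass to the limit by compactness of the intervals $I^{(\ell_j)}$ via a diagonal argument. There are two organizational differences worth noting. First, you invoke Proposition \ref{prop:fixed_ends} (fixed-ends realizations) where the paper uses Theorem \ref{thm:SymDyn} (periodic realizations of $\underline\ell^{(n)}$, normalized so that $z^{(n)}(0)\in\gamma(\overline{I^{(\ell_0)}})$); both work equally well here. Second, and more significantly, you extract convergence only of the initial condition $(\hat\xi^{(N)}_0,v^{(N)}_0)$ and then propagate forward and backward by continuous dependence of the flows and of the first return map $F$; the paper instead diagonalizes over the \emph{entire} bi-infinite sequence of transition parameters $(\xi^{(n)}_k)_{k\in\Z}$, obtains a limit sequence $\bar{\underline\xi}$ with $\bar\xi_{2j},\bar\xi_{2j+1}\in I^{(\ell_j)}$, and then \emph{defines} the limit trajectory as the concatenation of the unique outer and inner arcs joining consecutive limit points, verifying Snell's law at each junction by passing to the limit in Eq.~\eqref{eq:SnellConcatenation}. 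The paper's route buys you something: you never need continuity of the return map at the limit point, because the limit object is built directly from arcs whose existence, uniqueness, non-tangency and \tnt\ class are guaranteed a priori by the outer-arc and inner-arc properties on the compact intervals. Your route has to rule out tangential limit crossings and degeneration to the critical angle of Eq.~\eqref{eq:angolocritico} \emph{before} you can even assert that $F^j(\xi^\star,v^\star)$ is defined; you correctly flag this as the delicate point, but the justification you give (the critical points lying in the open rectangles) controls the endpoint parameters, not directly the transversality of the arcs at the crossings. Converging all transition parameters first, as the paper does, is the cleaner way to close that gap.
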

\begin{proof}
	Let us take a sequence $\underline{\ell}=(\ell_j)_{j\in\Z}\in\mathcal L$, and, for every $n\in\N$ consider the truncated sequences 
	\begin{equation*}
		\underline\ell^{(n)}\eqdef (\ell_{-n},\ldots, \ell_n), 
	\end{equation*}
	which are elements of $\mathbb L$ of length $2n+1$. Since $h> h_1$, by Theorem \ref{thm:SymDyn}, for every $n\in\N$ there exists $\hat{\underline\xi}^{(n)} \in \mathbb{S}_{\underline\ell^{(n)}}$ such that the corresponding trajectory 
	\[
	z^{(n)}(\cdot) \eqdef z\left(\cdot;\hat{\underline\xi}^{(n)},h\right)
	\]
	realizes the word $\underline\ell^{(n)}$.  Since the trajectories $z^{(n)}(\cdot)$ are periodic, without loosing in generality and possibly with a time translation, we can assume that 
	\[
	z^{(n)}(0) \in \gamma (\overline{I^{(\ell_0)}}).
	\]
	We now extend by periodicity $\hat{\underline\xi}^{(n)}$ to obtain a sequence in $\mathcal{L}$
	\[
	\left(\underline\xi^{(n)}\right)_n, \qquad 
	\underline\xi^{(n)} = \left(\xi^{(n)}_k\right)_{k \in \Z},
	\]
	where, for every $n$, $\underline\xi^{(n)} \in \mathbb{L}_P$ is a $(4n+2)$-periodic sequence. By construction,
	\begin{equation}\label{eq:comp}
		\forall k \in \Z,\; \exists N_k >0, \; \exists i_k \in \mathcal{I}
		\; \colon \; 
		\xi^{(n)}_k \in {I^{(i_k)}}, \; \forall n \geq N_k,
	\end{equation}
	namely, fixed $k$, the points $\xi_k^{(n)}$ belong eventually to the same compact interval prescribed by the sequence $\underline \ell$.  Through a diagonal process, we can construct an index sequence $(a_n)_n \subset \mathbb{N}$ with $a_n \to \infty$ such that
	\[
	\xi^{(a_n)}_i  \stackrel{n \to \infty}{\longrightarrow} \bar\xi_i, \quad \text{for every } i \in \Z,
	\]
	where the sequence $\bar{\underline{\xi}} \eqdef (\bar\xi_i)_i$  realizes the word $\underline \ell$, in the sense that
	\begin{equation*}
		\bar\xi_{2j}, \bar\xi_{2j+1} \in {I^{(\ell_j)}},
		\qquad 
		\text{for every } j \in \Z.
	\end{equation*}
	Since $h>h_1$, we can then define the concatenation $\bar z\colon \R \to \R^2$ connecting the sequence of points $\left( \gamma(\bar \xi_i) \right)_i$: our aim is now to verify that $\bar z$ is an admissible trajectory for the complete dynamics. 
	This fact follows from the differentiable dependence of each arc and recalling that the concatenations $\left(z^{(a_n)}\right)_n \subset \left(z^{(n)}\right)_n$ satisfy the Snell's law at every transition point.
	
	Defining now $(\xi,v) \eqdef (\bar \xi_0, \bar z'(0))$ we have that $\pi(\xi,v) = \underline \ell$ and the surjectivity is proved.
\end{proof}

In order to prove the continuity of $\pi$ let us start with a preliminary lemma whose proof follows from the continuous dependence of the outer and inner arcs with respect to variations of the endpoints and by the compactness of the intervals ${I^{(j)}}$, $j\in\mathcal I$. 
\begin{lemma}\label{lem:T_unif_lim}
	Let $h> h_1$; then there exists $C>0$ such that for every $j\in\mathcal I$, $\xi_0, \xi_1\in {I^{(j)}}$ and $\xi_2\in \bigcup_{i\in NA(j)}{I^{(i)}}$ it holds	
	\begin{equation*}
		T_E(\xi_0, \xi_1)\leq C, \quad T_I(\xi_0, \xi_2; h)\leq C, 
	\end{equation*}
	where $T_E$ and $T_I$ are as in Definition \ref{def:concatenazione}. 
\end{lemma}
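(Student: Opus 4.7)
The strategy is to show that the two time-of-flight functions
\[
T_E \colon I^{(j)} \times I^{(j)} \to (0,+\infty), \qquad
T_I(\,\cdot\,;h) \colon I^{(j)} \times \bigcup_{i \in NA(j)} I^{(i)} \to (0,+\infty)
\]
are well defined and continuous on their (compact) domains, and then to invoke the extreme value theorem on each of the finitely many pairs $(j,i)$ with $i\in NA(j)$, taking $C$ to be the maximum of the resulting suprema.

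First I would establish continuity of $T_E$. By the outer-arc property in Definition \ref{def:prop_arco_est}, for every $(\xi_0,\xi_1)\in I^{(j)}\times I^{(j)}$ there is a unique outer arc $z_E(\,\cdot\,;\gamma(\xi_0),\gamma(\xi_1))$ reaching $\gamma(\xi_1)$ at some $T_E(\xi_0,\xi_1)>0$. Because the outer Hamiltonian $V_E$ is smooth in $\R^2\setminus\overline D$, the associated flow is $C^1$ in its initial data, so one can express $T_E$ implicitly as the first strictly positive instant at which the trajectory meets $\partial D$. Uniqueness rules out tangential return, so the implicit function theorem gives continuous (in fact $C^1$) dependence of $T_E$ on $(\xi_0,\xi_1)$. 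Continuity up to the diagonal $\xi_0=\xi_1$ is handled directly: the outer arc there is a bounded harmonic oscillation out of and back into the same boundary point, whose period is uniformly controlled by $V_E$ on the compact set $\gamma(I^{(j)})$.

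For $T_I$, the same scheme works on the off-diagonal part, using the inner-arc property of Definition \ref{def:prop_arco_int} to supply uniqueness of the \tnt{} arc and hence continuity via the implicit function theorem for $h>h_0$. The delicate point is the diagonal $\xi_0=\xi_2$ with $\xi_0\in I^{(j)}$, where the \tnt{} inner arc is a collision-ejection homothetic trajectory understood in the regularized sense of Levi-Civita. Here I would appeal to the fact that in regularized (fictitious) time the Keplerian flow is smooth in initial data and that the Levi-Civita change of time is itself continuous in the endpoints of the arc; this is exactly the setting already used in \cite{deblasiterraciniellissi}. Combining continuity of $T_I$ off the diagonal with continuity of the regularized time-of-flight across the collisional diagonal yields continuity on the whole compact set $I^{(j)}\times\bigcup_{i\in NA(j)}I^{(i)}$.

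The main (only) obstacle is thus the verification of continuity of $T_I$ at collisional endpoints: away from these points all ingredients are standard ODE facts, while at them one must pass to Levi-Civita variables, check that the regularized arrival time depends continuously on the endpoints, and then translate back to the physical time through the integral of $\|z\|$ along the arc, which is itself continuous because $\gamma$ is $C^1$ and the endpoints vary in a compact set bounded away from the origin. Once continuity on the compact product of closed intervals is established, the claim is immediate from compactness: set
\[
C \eqdef \max\Bigl\{\, \max_{j\in\mathcal I}\max_{I^{(j)}\times I^{(j)}} T_E,\;\; \max_{j\in\mathcal I}\,\max_{i\in NA(j)}\,\max_{I^{(j)}\times I^{(i)}} T_I(\,\cdot\,;h)\,\Bigr\},
\]
which is finite because $\mathcal I$ is finite and each factor is compact. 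This $C$ satisfies the stated bound.
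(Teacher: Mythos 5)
Your argument is correct and follows essentially the same route as the paper, which simply observes that the bound follows from continuous dependence of the outer and inner arcs on their endpoints together with the compactness of the intervals $I^{(j)}$. Your additional care at the collisional diagonal via Levi-Civita regularization is a reasonable elaboration of a point the paper leaves implicit, but it does not change the structure of the argument.
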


We are now ready to verify the continuity of the map $\pi$; we recall that the space of the admissible words $\mathcal L$ is endowed with the distance $d(\cdot, \cdot)$, defined in Eq. \eqref{eq:distL}, while in $X$ we will consider the usual Euclidean metric over $\R^3$. 
\begin{proposition}\label{prop:pi_continua}
	If $h> h_1$, the projection map $\pi$ is continuous. 
\end{proposition}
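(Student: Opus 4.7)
My plan is to reduce the statement to a finite-horizon continuity check. Given $(\xi,v)\in X$ with $\underline\ell=\pi(\xi,v)$ and $\varepsilon>0$, I pick $N$ so large that $2\sum_{|k|>N}4^{-|k|}<\varepsilon$; by the definition of the metric in \eqref{eq:distL} it is then enough to find $\delta>0$ such that, for every $(\xi',v')\in X$ with $\|(\xi',v')-(\xi,v)\|<\delta$, the symbols $\ell'_j$ of $\pi(\xi',v')$ coincide with $\ell_j$ for $|j|\leq N$. In other words, I need to show that for each such $j$ the composition $(\xi,v)\mapsto \chi(\mathcal F^j(\xi,v))$ is locally constant on $X$ at $(\xi,v)$.

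Step one is the continuity of $\mathcal F$ at every point of $X$. Both flows $\Phi^s_E$ and $\Phi^s_I$ depend smoothly on initial data, and by Lemma \ref{lem:T_unif_lim} the transit times $s_1\in\mathbb T^-(\xi,v)$ and $s_2\in\mathbb T^+(\xi,v)$ are uniformly bounded on $X$. By the very definition of $\Xi^{\pm}_r$, both crossings of $\partial D$ are transversal (strict sign on the normal component of the velocity), so the implicit function theorem, applied to the level defining function of $\partial D$, gives continuous dependence of $(\xi_1,v_1)$ and $(\xi_2,v_2')$ on $(\xi,v)$. The refraction map $R_{EI}$ is continuous on its domain; moreover, since $(\xi,v)\in X$ the condition $v_2'\in\tilde B_I^{\xi_2}$ is strict (otherwise the forward dynamics would fail to produce a second outer arc, contradicting $(\xi,v)\in X$), so $R^{-1}_{EI}$ is continuous at $v_2'$. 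Hence $\mathcal F$ is continuous at $(\xi,v)$ and, by induction, so is every $\mathcal F^j$.

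Step two exploits the combinatorial rigidity of the labels. The compact intervals $I^{(r)}$ are pairwise disjoint, hence separated by a uniform distance $d_0>0$. If $\mathcal F^j(\xi,v)\in\Xi^+_{\ell_j}$ has parameter in $I^{(\ell_j)}$, any point of $X$ whose parameter is within $d_0/2$ of $I^{(\ell_j)}$ must lie in $I^{(\ell_j)}$ itself, because $X\subset\bigcup_r\Xi^+_r$ already forces the parameter to belong to some $I^{(r)}$. Thus $\chi$ is locally constant along $X$ at $\mathcal F^j(\xi,v)$. Combining this with the continuity of $\mathcal F^j$ at $(\xi,v)$ yields $\delta_j>0$ for which $\chi(\mathcal F^j(\xi',v'))=\ell_j$ whenever $\|(\xi',v')-(\xi,v)\|<\delta_j$; choosing $\delta=\min_{|j|\leq N}\delta_j$ concludes the proof.

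The delicate point will be transversality: only on $X$ am I guaranteed strictly transversal $\partial D$-crossings and strictly subcritical refraction angles, so the argument does not extend to the closure $\overline X$, where the first return map may fail to be continuous (tangential exits, critical refractions, collisional inner arcs). A related subtlety is the continuous dependence of the unique (TnT) inner arc on its endpoints, which could in principle jump when the pair $(\xi_1,\xi_2)$ approaches an antipodal configuration; this is however already excluded by Definition \ref{def:prop_arco_int} and by the grammar encoded in $NA(\ell_j)$, so the uniqueness in the (TnT) homotopy class coupled with the smooth dependence of the Keplerian flow delivers the required continuity of the inner leg of $\mathcal F$.
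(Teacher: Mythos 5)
Your proposal is correct and follows essentially the same strategy as the paper: truncate the series defining the metric on $\mathcal L$ to a finite window, then use Lemma \ref{lem:T_unif_lim} together with continuous dependence on initial data to force the symbols to agree on that window. You merely unpack in more detail (transversality of the crossings, continuity of $R_{EI}$ and its inverse, local constancy of $\chi$ on the disjoint compact intervals) what the paper compresses into the phrase ``continuous dependence on the initial data on $[-a,a]$''.
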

\begin{proof}
	Let us fix $(\xi_0, v_0)\in X$. In order to estimate the quantity $d\left(\pi(\xi, v), \pi(\xi_0, v_0)\right)$ we define, for every $k\in\Z$, the $k-th$ projection map 
	\begin{equation*}
		\pi_k\colon X\to \mathcal I, \qquad \pi_k(\xi, v) \eqdef \chi\left(\mathcal F^k(\xi, v)\right), 
	\end{equation*}
	where $\chi$ is defined as in Eq. \eqref{eq:defchi}, to have
	\begin{equation}
		d\left(\pi(\xi, v), \pi(\xi_0, v_0)\right) = \sum_{k\in \Z}\frac{\rho\left(\pi_k(\xi, v), \pi_k(\xi_0, v_0)\right)}{4^{|k|}}.
	\end{equation}
	As the above series is always convergent, for any $\epsilon >0$ there exists $k_0\in \N$ such that for every $(\xi, v)\in X$
	\begin{equation*}
		\sum_{|k|\geq k_0}\frac{\rho\left(\pi_k(\xi, v), \pi_k(\xi_0, v_0)\right)}{4^{|k|}}<\epsilon.
	\end{equation*}
	We will now prove that, if $\delta>0$ is small enough and $\|(\xi, v)-(\xi_0, v_0)\|<\delta, $ then 
	\begin{equation}\label{eq:serietronc}
		\sum_{|k|< k_0}\frac{\rho\left(\pi_k(\xi, v), \pi_k(\xi_0, v_0)\right)}{4^{|k|}}=0.
	\end{equation}
	This is equivalent to require that, if $\underline \ell\eqdef\pi(\xi, v)$ and $\underline m\eqdef\pi(\xi_0, v_0)$, then $\ell_k=m_k$ for every $|k|<k_0$, namely, that the trajectories generated by $(\xi, v)$ and $(\xi_0,v_0)$ intersect the boundary $\partial D$ in the same neighbourhoods in the first $k_0$ steps forward and backward. In view of Lemma \ref{lem:T_unif_lim} it is possible to find $a>0$ such that both the trajectories cross $\partial D$ at least $4k_0+3$ times within the time interval $[-a, a]$. The thesis then follows from the continuous dependence on the initial data on  $[-a, a]$.  
\end{proof}

We conclude this section with a final remark; following exactly the techniques used in Proposition \ref{prop:pi_suriettiva} we can prove a result which will be crucial in Section \ref{sec:non_int_chaos}. 

\begin{proposition}\label{prop:mezze_etero}
	Let 
	\begin{equation*}
		\mathcal{L}^+\eqdef\left\{(\ell_0,\ldots, \ell_n, \ldots)\in\mathcal I^{\N}\,\colon\,\ell_{i+1} \in NA(\ell_i)\;\forall i\in \N\right\}.
	\end{equation*}
	be the set of the admissible \emph{infinite} words,
	$\underline\ell\in\mathcal L^+$ and $\xi\in I^{(\ell_0)}$. The there exists $v\in\R^2$ such that $(\xi, v)\in X$ and $\pi(\xi,v)=\underline\ell$\footnote{Here, the projection is intended only for the forward trajectory starting from $\left(\gamma(\xi), v\right)$.}. 
\end{proposition}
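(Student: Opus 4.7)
The strategy mimics the truncation and diagonal-extraction argument used in the proof of Proposition \ref{prop:pi_suriettiva}, replacing the periodic approximations supplied by Theorem \ref{thm:SymDyn} with the fixed-endpoint approximations supplied by Proposition \ref{prop:fixed_ends}, so that the starting point can be anchored at $\gamma(\xi)$. First, I extend $\underline\ell\in\mathcal L^+$ to a bi-infinite admissible word $\underline{\hat\ell}\in\mathcal L$ by inductively choosing $\hat\ell_{-k-1}\in NA(\hat\ell_{-k})$ for $k\ge 0$ (possible since every $NA(\cdot)$ is non-empty), with $\hat\ell_j=\ell_j$ for $j\ge 0$, and I truncate to $\underline{\hat\ell}^{(n)}\eqdef(\hat\ell_{-n},\dots,\hat\ell_n)\in\mathbb L$ for each $n\in\N$.

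For each $n$, I construct an admissible trajectory realizing $\underline{\hat\ell}^{(n)}$ and passing through $\gamma(\xi)$ at the central transition by gluing two half-pieces. I apply Proposition \ref{prop:fixed_ends} to the forward half-word $(\ell_0,\dots,\ell_n)$ with $\xi_a=\xi$ and $\xi_b^{(n)}\in I^{(\ell_n)}$ free, obtaining an admissible forward half-trajectory $z_+^{(n)}$ from $\gamma(\xi)$ with initial outer velocity $v_+^{(n)}\in B_E^\xi$; and I apply Proposition \ref{prop:fixed_ends} also to the backward half-word $(\hat\ell_{-n},\dots,\hat\ell_{-1},\ell_0)$ with $\xi_a^{(n)}\in I^{(\hat\ell_{-n})}$ free and $\xi_b=\xi$, obtaining a backward half-trajectory $z_-^{(n)}$ arriving at $\gamma(\xi)$ with inner velocity whose Snell refraction is an outer vector $v_-^{(n)}\in B_E^\xi$. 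An intermediate-value argument in the one-parameter free endpoint, using the continuous dependence of the Proposition \ref{prop:fixed_ends} solutions on their endpoints and the compactness and one-dimensionality of the outer energy sphere $B_E^\xi$, allows one to match $v_+^{(n)}=v_-^{(n)}$ and glue $z_\pm^{(n)}$ into a single bi-infinite admissible trajectory $z^{(n)}:\R\to\R^2$ through $\gamma(\xi)$ realizing $\underline{\hat\ell}^{(n)}$.

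A diagonal extraction on the transition parameters $\hat\xi_k^{(n)}\in I^{(\hat\ell_{\lfloor k/2\rfloor})}$ — each lying in a fixed compact interval by construction — produces, exactly as in the proof of Proposition \ref{prop:pi_suriettiva}, limit parameters $\bar\xi_k$ with $\bar\xi_0=\xi$ that realize the bi-infinite word $\underline{\hat\ell}$. By continuous dependence of the inner and outer arcs on their endpoints and the preservation of Snell's law in the limit, the corresponding concatenation $\bar z:\R\to\R^2$ is an admissible bi-infinite trajectory of the complete dynamics. Setting $v\eqdef \bar z'(0^+)$, one obtains $(\xi,v)\in X$ with forward projection $\pi(\xi,v)$ equal to $\underline\ell$, as required.

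The main obstacle is the matching step at the anchor $\gamma(\xi)$ at each finite level $n$, which has no counterpart in the proof of Proposition \ref{prop:pi_suriettiva} and which reduces to a one-variable intermediate-value problem on the compact sphere $B_E^\xi$, handled via continuous dependence and the surjectivity of the shooting map. All other ingredients — the compactness of the intervals $I^{(\hat\ell_j)}$, the diagonal extraction, and the passage to the Snell limit — are direct adaptations of the corresponding steps of Proposition \ref{prop:pi_suriettiva}.
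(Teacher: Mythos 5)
Your forward-half construction is exactly the paper's intended argument: truncate $\underline\ell$ to $(\ell_0,\dots,\ell_n)$, apply Proposition \ref{prop:fixed_ends} with $\xi_a=\xi$ fixed and $\xi_b$ chosen arbitrarily in $I^{(\ell_n)}$, and pass to the limit by diagonal extraction on the transition parameters as in Proposition \ref{prop:pi_suriettiva}; since the first parameter equals $\xi$ for every $n$, the limit concatenation is anchored at $\gamma(\xi)$ and $v$ is its initial velocity. This alone proves the statement, because (as the footnote makes explicit) the projection is intended only for the forward trajectory.

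The backward extension and the gluing at $\gamma(\xi)$ are therefore superfluous, and as written the matching step is not justified. You invoke an intermediate-value argument to find free endpoints with $v_+^{(n)}=v_-^{(n)}$, but you have not shown that the ranges of the two shooting maps $\xi_b\mapsto v_+^{(n)}$ and $\xi_a\mapsto v_-^{(n)}$ intersect at all: these are two subsets of the circle $B_E^\xi$ with no a priori relation, and ``surjectivity of the shooting map'' is nowhere established. Moreover, the critical points produced by Poincar\'e--Miranda in Proposition \ref{prop:fixed_ends} are not known to be unique under the mere admissibility hypotheses of this section, hence not known to depend continuously on the endpoints (uniqueness only appears later, under non-degeneracy, via Lemma \ref{lem:pi_iniettiva}); so even the continuity of your shooting maps is unavailable. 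If one really wanted a bi-infinite trajectory through $\gamma(\xi)$, the correct route would be to run the Poincar\'e--Miranda argument once on the whole truncated two-sided word with the single coordinate corresponding to $\xi$ pinned, rather than gluing two independently produced halves. None of this is needed here: delete the backward half and your proof is complete and coincides with the paper's.
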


\subsection{Non-collisional symbolic dynamics}\label{ssec:collisioni}
As observed in Definition \ref{def:prop_arco_int},
the inner dynamics naturally includes collisional arcs as well. As a consequence, we can not \emph{a priori} exclude the occurrence of collisions in the symbolic dynamics found by means of Theorem \ref{thm:ex_dyn_sym_intro}. Nevertheless, as we will prove in the present section, if we suitably restrict the set $\mathcal L$ we can construct a \emph{non-collisional} symbolic dynamics.

Let $z$ be a concatenation satisfying $\underline{\ell} \in \mathcal{L}$ and assume that $z$ has a collision, which means that one of its inner arcs is homothetic. Assume, without loosing in generality that such collisional inner arc hits the boundary in $\gamma\left(I^{(\ell_0)}\right)$. By uniqueness of the solution of a Cauchy problem and by Snell's law we deduce that after the collision the trajectory reflects into itself. This reflection, which involves every arc in the concatenation, forces the symmetry of the word $\underline{\ell}$ with respect to $\ell_0$, in the sense that $\ell_k = \ell_{-k}$ for any $k \in {\mathbb{N}}$. \\
On the other hand, the same reasoning can be used in a \emph{reversed} formulation to have \emph{sufficient} conditions for the concatenation induced by $\underline\ell$ to be non-collisional. In particular one can construct a \emph{non-collisional symbolic dynamics} for our refractive billiard, as stated in Corollary \ref{coro:no_coll_intro}. 

If the collisional trajectory is periodic a more complex phenomenology appears. Let $\underline\ell= \left(\ell_0, \dots, \ell_{n-1}\right)\in\mathbb L$, 
and  $z\left(\cdot; \hat{\underline\xi}; h\right)$ the trajectory realizing $\underline\ell$ where $\hat{\underline\xi}\in\mathring{\mathbb  S}_{\underline\ell}$ is provided in Thereom \ref{thm:SymDyn} (once more we assume $h>h_1$). From this moment on, we will identify $\underline \ell$ with any of its shifts.

To describe in details the trajectory $z\left(\cdot; \hat{\underline\xi}; h\right)$, it is worth to keep trace not only of the transition points $\gamma\left(\hat{\underline\xi}_i\right)$, $i=0, \dots, 2n-1$, but also of the inner and outer velocity vectors at such points (see also Figure \ref{fig:collisioni1}, left)\footnote{From this moment on, for the sake of brevity and with an abuse of notation, we will denote with $T$ the final time of an arc, omitting the dependence on the endpoints and without discerning between the inner and the outer case.}, 
\begin{equation}\label{eq:defV}
\footnotesize{	
	\begin{aligned}
		&v_{2j}=z'_E\left(0; \gamma\left(\hat{\xi}_{2j}\right), \gamma\left(\hat{\xi}_{2j+1}\right)\right), \, &&v_{2j+1}=z'_E\left(T; \gamma\left(\hat{\xi}_{2j}\right), \gamma\left(\hat{\xi}_{2j+1}\right)\right), \, &&&v_{2n}=v_0,\\
		&\tilde v_{2j+1}=z'_I\left(0; \gamma\left(\hat{\xi}_{2j+1}\right), \gamma\left(\hat{\xi}_{2j+2}\right); h\right), \, &&\tilde v_{2j+2}=z'_I\left(T; \gamma\left(\hat{\xi}_{2j+1}\right), \gamma\left(\hat{\xi}_{2j+2}\right); h\right), \, &&&\tilde v_0=\tilde v_{2n},
	\end{aligned}
}
\end{equation}
where  $j=0, \dots, n-1$.

\begin{figure}
	\centering
	\begin{overpic}[width=0.7\linewidth]{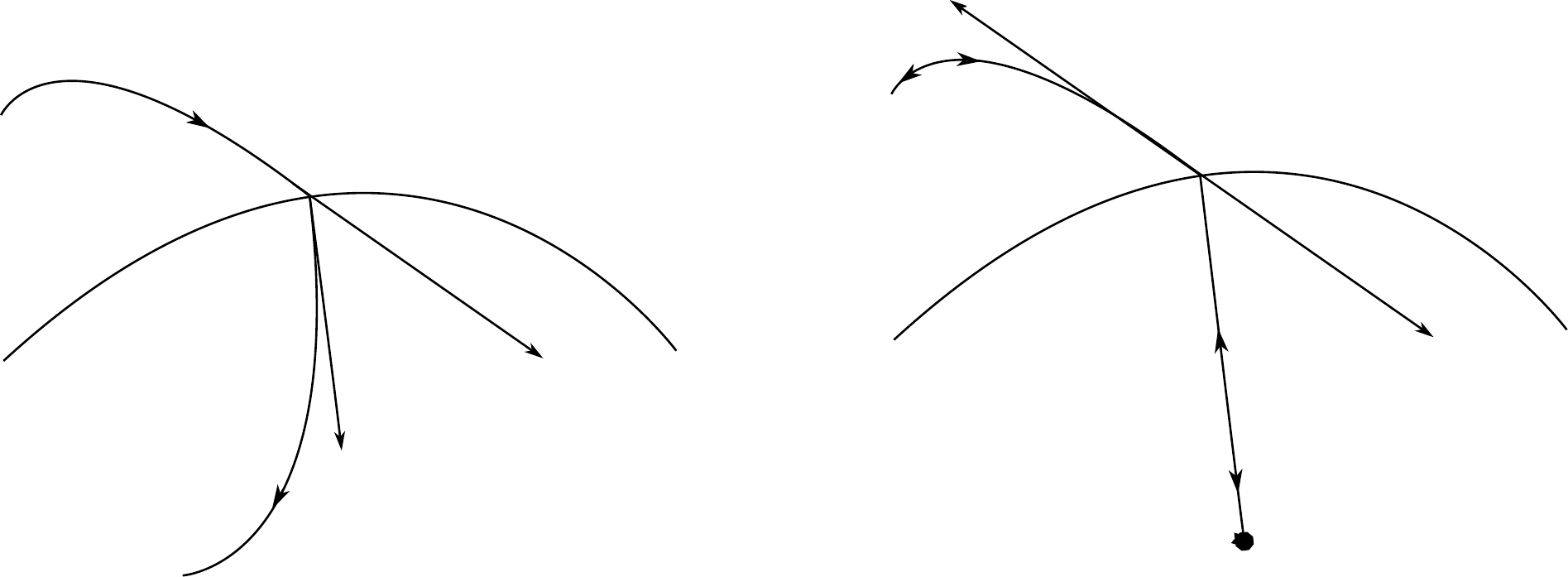}
		\put (1,17){\rotatebox{30}{\small$\partial D$}}
		\put (19,27){\rotatebox{0}{\small$\gamma\left(\hat\xi_{2j+1}\right)$}}
		\put (25,21.5){\rotatebox{-30}{\small$v_{2j+1}$}}
		\put (23,9){\rotatebox{0}{\small$\tilde v_{2j+1}$}}
		\put (58,18.5){\rotatebox{30}{\small$\partial D$}}
		\put (75,28.5){\rotatebox{0}{\small$\gamma\left(\hat\xi_{2j+1}\right)=\gamma\left(\hat\xi_{2j+2}\right)$}}
		\put (63,37){\rotatebox{-30}{\small$v_{2j+2}$}}
		\put (85,21){\rotatebox{-30}{\small$v_{2j+1}$}}
		\put (73,2){\rotatebox{96}{\small$\tilde v_{2j+1}=-\tilde v_{2j+2}$}}
	\end{overpic}
	\caption{Velocity vectors as defined in Eq. \eqref{eq:defV} in the non-collisional case (left) and in the collisional case (right). In the second case, the collisional inner arc forces a reflection in the adjacent outer arcs. }
	\label{fig:collisioni1}
\end{figure}

The following Theorem underlines as the presence of collisional inner arcs can impact the overall structure of $z\left(\cdot; \hat{\underline\xi}; h\right)$, when it is periodic. 

\begin{theorem}\label{thm:orbite di collisione}
	Let $\underline\ell\in\mathbb L$ and $h > h_1$; define $n\eqdef|\underline\ell|$, and suppose $n>1$. Let $\hat{\underline\xi}\in\mathbb{S}^\circ_{\underline{\ell}}$ and $z(\cdot)\eqdef z\left(\cdot; \hat{\underline\xi}; h\right)$ as in Theorem \ref{thm:SymDyn}, and suppose that the concatenation $z(\cdot)$ admits a collisional inner arc. Then: 
	\begin{enumerate}
		\item if $n$ is even, $z(\cdot)$ has another collisional arc and ${\underline\ell}$ is symmetric with respect to the axis that separates ${\ell}_{n/2-1}$ from $\ell_{n/2}$; 
		\item if $n$ is odd, then $z(\cdot)$ has a homothetic outer arc and $\left(\ell_0, \dots, \ell_{n-1}, \ell_0\right)$ is symmetric with respect to the axis that separates ${\ell}_{(n+1)/2-1}$ from $\ell_{(n+1)/2}$. 
	\end{enumerate}
\end{theorem}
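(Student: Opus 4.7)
The plan is to leverage the time-reversibility of both the inner and the outer equations of motion, together with the radial character imposed by Levi--Civita regularization at a collision, to produce a palindromic identification of the transition parameters $(\hat\xi_k)$. The $2n$-periodicity of the concatenation then forces a \emph{second} center of palindrome, whose position --- between two word-pairs or inside a single word-pair --- depends on the parity of $n$ and is precisely the source of the stated dichotomy.

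First I would analyze the collision itself. Up to a cyclic relabelling of $\underline\ell$, assume the collisional inner arc runs from $\gamma(\hat\xi_{2j+1})$ to $\gamma(\hat\xi_{2j+2})$. By Levi--Civita regularization the arc bounces back along the incoming radial direction, so $\hat\xi_{2j+1}=\hat\xi_{2j+2}$ and, with the notation of \eqref{eq:defV}, $\tilde v_{2j+1}=-\tilde v_{2j+2}$. Since $\|v\|^{2}=2V$ on both sides of $\partial D$, Snell's law \eqref{eq:SnellIntro} is equivalent to preservation of the tangential component of the velocity through the refraction; the normal component is then fixed in modulus by energy conservation, with sign dictated by the side of $\partial D$ being approached. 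This forces $v_{2j+2}=-v_{2j+1}$.

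Second, both $z''=-\omega^{2}z$ and $z''=-\mu z/\|z\|^{3}$ are invariant under $t\mapsto -t$. Therefore the outer arc started at $\gamma(\hat\xi_{2j+2})$ with velocity $-v_{2j+1}$ is the time-reverse of the outer arc ending at $\gamma(\hat\xi_{2j+1})$ with velocity $v_{2j+1}$, and the uniqueness in the outer-arc property gives $\hat\xi_{2j+3}=\hat\xi_{2j}$ and $v_{2j+3}=-v_{2j}$. Snell's law propagates the sign reversal to the inner velocity, and the uniqueness of the \tnt inner arc then yields $\hat\xi_{2j+4}=\hat\xi_{2j-1}$. Iterating, I obtain the palindrome identity
\[
\hat\xi_{2j+1+k}=\hat\xi_{2j+2-k}, \qquad v_{2j+1+k}=-v_{2j+2-k}, \qquad k\in\Z,
\]
with all indices taken modulo $2n$.

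Third, I would exploit the $2n$-periodicity: a palindrome centered between positions $2j+1$ and $2j+2$ admits a companion axis centered between positions $2j+1+n$ and $2j+2+n$. When $n$ is even, these two new indices are again an odd followed by the next even integer, so the companion center lies between the pair for $\ell_{j+n/2}$ and the pair for $\ell_{j+n/2+1}$; the palindrome yields $\hat\xi_{2(j+n/2)+1}=\hat\xi_{2(j+n/2)+2}$ together with $\tilde v_{2(j+n/2)+1}=-\tilde v_{2(j+n/2)+2}$, and the uniqueness of the \tnt inner arc identifies it with a regularized homothetic trajectory: a \emph{second} collisional inner arc. When $n$ is odd, the companion indices form an even-odd pair and the companion center falls \emph{inside} the pair for $\ell_{j+(n+1)/2}$; combining $\hat\xi_{2(j+(n+1)/2)}=\hat\xi_{2(j+(n+1)/2)+1}$ with the velocity identity $v_{2(j+(n+1)/2)+1}=-v_{2(j+(n+1)/2)}$ and remarking that the only solution of $z''=-\omega^{2}z$ returning to its initial position with reversed velocity is radial through the origin, I obtain a homothetic outer arc.

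Finally, I would translate the palindrome into the claimed symmetry of the word. Pairwise disjointness of the intervals $I^{(i)}$ implies that any equality $\hat\xi_p=\hat\xi_q$ forces $\ell_{\lfloor p/2\rfloor}=\ell_{\lfloor q/2\rfloor}$. The cyclic shift that places the collisional inner arc between $\ell_{n/2-1}$ and $\ell_{n/2}$ (for $n$ even) or between $\ell_{(n-1)/2}$ and $\ell_{(n+1)/2}$ (for $n$ odd) turns the $\hat\xi$-palindrome into the claimed palindrome of $\underline\ell$ in the even case, and of the extended word $(\ell_0,\dots,\ell_{n-1},\ell_0)$ in the odd case, where the homothetic outer arc found above is exactly what identifies the two extremal symbols of the extended word. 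The main obstacle is the careful bookkeeping of indices in the last two steps: the time-reversal insight is elementary, but matching the $\hat\xi$-level palindrome centers to the correct word-level axes and extracting the homothetic outer arc from the rigidity of harmonic motion requires the modular arithmetic sketched above.
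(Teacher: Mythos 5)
Your proposal is correct and follows essentially the same route as the paper: the collision forces a radial bounce-back, time-reversibility plus uniqueness of the outer and inner (\tnt) arcs propagates this into a palindromic identification of the transition points and (sign-reversed) velocities, and the $2n$-periodicity then produces a second symmetry center which is an inner arc with coinciding endpoints (hence collisional) when $n$ is even and an outer arc with coinciding endpoints and reversed velocity (hence homothetic) when $n$ is odd. Your version merely makes explicit the reflection identities that the paper records in Eq.~\eqref{eq:uguaglianzeColl} and the rigidity argument for the harmonic arc, so no further changes are needed.
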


	\begin{figure}
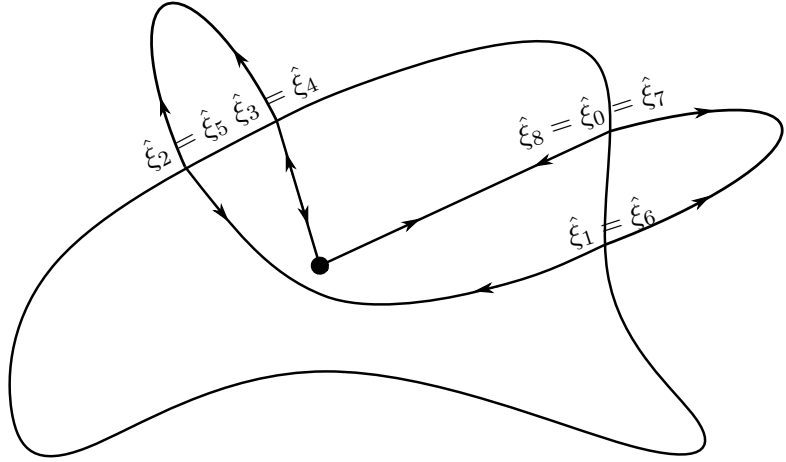

		\centering
		\begin{overpic}[width=0.6\linewidth]{"immagini/collisioni22"}
			\put(16.5, 37.5){\small\rotatebox{28}{$\hat\xi_2=\hat\xi_5$}}	
			\put(28, 43.5){\small\rotatebox{25}{$\hat\xi_3=\hat\xi_4$}}	
			\put(65, 40.5){\small\rotatebox{18}{$\hat\xi_8=\hat\xi_0=\hat\xi_7$}}
			\put(71.5, 27.5){\small\rotatebox{18}{$\hat\xi_1=\hat\xi_6$}}		
		\end{overpic}
		\caption{Graphical representation of a possible periodic collisional trajectory. The presence of a collisional inner arc, along with the periodicity of the whole concatenation and the total number of transition points, implies the existence of a second collisional arc. With an abuse of notation $\hat \xi_i$ is identified with $\gamma \left(\hat \xi_i\right)$.}
		\label{fig:collisioni-2}
	\end{figure}

\begin{proof}[Proof of Theorem \ref{thm:orbite di collisione}]
	Let us start by assuming that $n$ is even:  without loss of generality we can suppose that the collisional arc is the one connecting $\gamma\left(\hat{\xi}_{n-1}\right)$ to $\gamma\left(\hat{\xi}_{n}\right)$, namely, $z_I\left(\cdot; \gamma\left(\hat{\xi}_{n-1}\right), \gamma\left(\hat{\xi}_n\right); h\right)$. Then, the trajectory is reflected back after the collision and one has the following equalities:
	\begin{equation}\label{eq:uguaglianzeColl}
		\footnotesize{
		\begin{aligned}
			&\hat{\xi}_{n-(j+1)}=\hat{\xi}_{n+j}, \quad\tilde v_{n-(j+1)}=-\tilde v_{n+j}, \quad v_{n-(j+1)}=-v_{n+j}, 
			\quad &&j \in \{0, \dots, n-1\}\\
			&z_E\left(\cdot; \gamma\left(\hat{\xi}_{n-(j+2)}\right), \gamma\left(\hat{\xi}_{n-(j+1)}\right)\right) = z_E\left(T-\cdot\,; \gamma\left(\hat{\xi}_{n+j}\right), \gamma\left(\hat{\xi}_{n+j+1}\right)\right),	 &&j \in \{0, \dots, n-2\} \text{ even},\\ 
			&z_I\left(\cdot; \gamma\left(\hat{\xi}_{n-(j+2)}\right), \gamma\left(\hat{\xi}_{n-(j+1)}\right); h\right) = z_I\left(T-\cdot\,; \gamma\left(\hat{\xi}_{n+j}\right), \gamma\left(\hat{\xi}_{n+j+1}\right); h\right),	 &&j \in \{0, \dots, n-2\} \text{ odd}.
		\end{aligned}
	}
	\end{equation}
	In particular, taking $j=n-1$, one has $\hat{\xi}_0=\hat\xi_{2n-1}$; by periodicity, $\hat{\xi}_{2n}=\hat{\xi}_{2n-1}$. Hence, by uniqueness, since it has the same endpoints, the inner arc $z_I\left(\cdot; \gamma\left(\hat{\xi}_{2n-1}\right), \gamma\left(\hat{\xi}_{2n}\right), h\right)$ must be collisional, and then the first claim is proved.\\ 
	Let us now focus on the structure of $\underline{\ell}=\left(\ell_0,\dots, \ell_{n-1}\right)$. Since for every $k=0, \dots, n-1$ one has $\hat{\xi}_{2k}, \hat{\xi}_{2k+1}\in I^{(\ell_k)}$, equalities in \eqref{eq:uguaglianzeColl} imply that for every $j=0, \dots, n/2-1$ it holds $\ell_{n/2-(j+1)}=\ell_{n/2+j}$, and then the sequence $\left(\ell_0,\dots, \ell_{n-1}\right)$ is symmetric with respect to the axis which separates $\ell_{n/2-1}$ from $\ell_{n/2}$. \\
	Let us now suppose that $n>1$ is odd and, without loss of generality, assume that the arc $z_I\left(\cdot; \gamma\left(\hat{\xi}_{n}\right), \gamma\left(\hat{\xi}_{n+1}\right), h\right)$ is collisional. Using the same reasoning as in the even case, one can conclude that $z_E\left(\cdot; \gamma\left(\hat{\xi}_{0}\right), \gamma\left(\hat{\xi}_{1}\right)\right)$ is homothetic and that for every $j=0, \dots, (n+1)/2-2$ one has $\ell_{(n+1)/2-(j+1)}=\ell_{(n+1)/2+j}$.
\end{proof}

\begin{remarks} \label{rem:coll_descr}
	The proof of Theorem \ref{thm:orbite di collisione} is particularly rich of further informations: 
	\begin{enumerate}
		\item in both the described cases one can not have more than two radial (inner or outer) arcs; in particular, the concatenation segment between two radial arcs must contain $n-1$ non-radial intermediate arcs. This is necessary to have $|\ell|=n$; 
		\item with completely analogous reasonings, one can prove that, if $z\left(\cdot, \hat{\underline\xi}, h\right)$ admits an outer homothetic arc, then 
		\begin{itemize}
			\item if $|\underline\ell|$ is even, there must be another outer homothetic arc;
			\item if $|\underline\ell|$ is odd, there must be a collision-ejection inner arc.
		\end{itemize}
		In both cases, the sequence $\underline\ell$ must satisfy symmetry properties analogous to the ones described in Theorem \ref{thm:orbite di collisione}; 
		\item  as a consequence, a periodic trajectory can have either zero or two radial arcs, between which it is reflected. 
	\end{enumerate}
\end{remarks}
\begin{remark}
	In the particular case $n=1$, one has that the concatenation $z\left(\cdot, \hat{\underline\xi}, h\right)$ is collisional if an only if it is a homothetic orbit for the complete dynamics. 
\end{remark}

\section{Admissible domains and central configurations}\label{sec:adm_dom_cc}
The result stated in Theorem \ref{thm:SymDyn} guarantees the presence of a symbolic dynamics under quite abstract assumptions on the domain $D$, namely the outer and inner-arc properties as well as the change-sign one. This section is devoted to translate these assumptions into more concrete ones. At the end we will find a general set of domains which enjoys the admissibility property. Such conditions will be strictly related to the notion of \emph{central configurations} for the domain that we now define.

\begin{definition}\label{def:cc}
	We say that $\bx\in[0,L]$ is a \emph{central configuration} for $D$ if it is a critical point of $\|\gx\|$, $\xi \in [0,L]$ and if $D$ satisfies $\lsc{\bx}$ (see Definition \ref{def:local_star_conv}).
\end{definition}

Generalizing Eq. \eqref{eq:cos} we immediately deduce that 
$\bx$ is a critical point for $\|\gx\|$ if and only if $\gamma(\bx)$ is orthogonal to $\partial D$. Furthermore, as the star convexity is assumed at a central configuration, by the regularity of $\gamma$, we deduce the existence of an open interval $(a,b)\ni\bx$ such that $D$ satisfies $\lsc{(a,b)}$. Possibly reducing this interval, the outer-arc property is guaranteed.

\begin{proposition}\label{prop: esistenza esterni}
	Let $\bx\in[0,L]$ be a central configuration for $D$. Then there exists a neighborhood $\bx \in (\bar a,\bar b) \subset [0,L]$ which satisfies $\lsc{(\bar a,\bar b)}$ and the outer-arc property.
\end{proposition}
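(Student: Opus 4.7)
The plan is to realize the outer arc as a local perturbation of the radial homothetic outer trajectory based at $\gamma(\bar\xi)$, via the implicit function theorem applied to the explicit harmonic flow.

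Since $\bar\xi$ is a central configuration, the ray from the origin through $\gamma(\bar\xi)$ meets $\overline D$ only at $\gamma(\bar\xi)$; by continuity of $\gamma$ and a standard compactness argument one can fix an open interval $(\bar a_0,\bar b_0)\ni\bar\xi$ on which $D$ satisfies $\lsc{(\bar a_0,\bar b_0)}$.

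The harmonic flow associated to \eqref{eq:bolzaext_intro} has the explicit form
\[
z(s;\xi_1,v) = \cos(\omega s)\,\gamma(\xi_1)+\frac{\sin(\omega s)}{\omega}\,v,
\]
and the zero-energy constraint forces $v$ to lie on the circle $\|v\|^2=2V_E(\gamma(\xi_1))$, which we parametrize by an angle. For $\xi_1=\xi_2=\bar\xi$, orthogonality of $\gamma(\bar\xi)$ to $\partial D$ singles out the outward homothetic arc, whose initial velocity $\bar v$ is the outward radial vector with $\|\bar v\|=\sqrt{2V_E(\gamma(\bar\xi))}$; a direct computation shows that this arc returns to $\gamma(\bar\xi)$ at a first time $\bar T\in(0,\pi/\omega)$ with final velocity $-\bar v$. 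I then apply the implicit function theorem to the endpoint map $(\theta,T)\mapsto z(T;\xi_1,v(\theta))-\gamma(\xi_2)$ at $(\xi_1,\xi_2)=(\bar\xi,\bar\xi)$ and $(\theta,T)=(\bar\theta,\bar T)$. Its partial differential in $(\theta,T)$ has columns $\omega^{-1}\sin(\omega\bar T)\,\tau$ and $-\bar v$, where $\tau\perp\bar v$ is the tangent to the energy circle at $\bar v$; since $\sin(\omega\bar T)>0$ and $\tau\perp\bar v$, this Jacobian is non-degenerate. One thus obtains a unique $C^1$ pair $(\theta,T)(\xi_1,\xi_2)$ defined on a neighborhood $(\bar a_1,\bar b_1)\times(\bar a_1,\bar b_1)$ of $(\bar\xi,\bar\xi)$, close to the homothetic datum.

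It remains to verify the constraint $z(s)\notin D$ for $s\in(0,T)$ and to rule out competitor solutions. Continuous dependence on the data, together with the uniform separation of the middle portion of the homothetic arc from $\overline D$ inside Hill's ball, gives that the perturbed arc stays outside $\overline D$ away from its endpoints when the neighborhood is small enough. Near each endpoint $\gamma(\xi_i)$ the arc has velocity close to $\pm\bar v$, transverse to $\partial D$ by the central-configuration orthogonality, so combined with $\lsc{(\bar a_0,\bar b_0)}$ local re-entries into $\overline D$ are excluded. For uniqueness, the only other zero-energy harmonic arc connecting $\gamma(\xi_1)$ to $\gamma(\xi_2)$ is a perturbation of the ``inward'' companion, which at $\xi_1=\xi_2=\bar\xi$ is the radial trajectory passing through the origin; by continuous dependence any such perturbation must visit a neighborhood of the origin, hence intersect $\overline D$. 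Setting $(\bar a,\bar b)\eqdef(\bar a_0,\bar b_0)\cap(\bar a_1,\bar b_1)$, possibly further shrunk, concludes the argument.

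I expect the main obstacle to be in the last paragraph, since it requires combining the explicit harmonic dynamics (which for an initially inward radial velocity passes through the origin) with the $\lsc{}$ assumption in order to exclude the ``long'' competitor branch of the two-point boundary value problem for the harmonic flow.
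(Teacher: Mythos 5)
Your strategy is the same one the paper sketches for this proposition: start from the homothetic outer solution at $\gamma(\bx)$ and use the smooth dependence of the (explicit) harmonic flow to produce the connecting arc; your existence argument via the implicit function theorem is correct and complete. The computation of $\bar T=2\phi/\omega\in(0,\pi/\omega)$ with final velocity $-\bar v$, the non-degeneracy of the Jacobian with columns $\omega^{-1}\sin(\omega\bar T)\tau$ and $-\bar v$, and the two-zone verification of the constraint $z(s)\notin D$ (uniform separation in the middle, transversality at the endpoints) are all sound and consistent with what the paper intends.

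The gap is in the uniqueness paragraph, and it is exactly where you predicted trouble. Your enumeration of competitors is incomplete: for the fixed-energy harmonic two-point problem, writing $v=\frac{\omega}{\sin(\omega T)}\left(\gamma(\xi_2)-\cos(\omega T)\gamma(\xi_1)\right)$ and imposing $\|v\|^2=2V_E(\gamma(\xi_1))$ yields a \emph{quadratic} equation in $c=\cos(\omega T)$. One root produces your outward/inward pair (and the inward one is indeed killed because it passes near the origin). But at $\xi_1=\xi_2=\bx$ the other root is $c=1$, which corresponds to the one-parameter family of full-period ellipses through $\gamma(\bx)$; for $\xi_1\neq\xi_2$ this root moves slightly below $1$ and spawns two further solutions with $T$ near $0$ and near $2\pi/\omega$ — a short, almost rectilinear arc close to the chord from $\gamma(\xi_1)$ to $\gamma(\xi_2)$, and an almost-closed ellipse. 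Neither of these approaches the origin, so your exclusion mechanism (``must visit a neighbourhood of the origin, hence intersect $\overline D$'') does not apply to them. Whether they violate $z(s)\notin D$ depends on the local convexity of $\partial D$ at $\gamma(\bx)$ relative to the circle $\|z\|=\|\gamma(\bx)\|$, not on the central-configuration hypothesis alone (if $\bx$ is, say, a maximum of $\|\gamma(\cdot)\|$ and $\partial D$ locally lies strictly inside that circle, the tangent full-period ellipse can avoid $\overline D$ entirely). To close the argument you must discard this second branch explicitly — e.g.\ by restricting the admissible class to arcs that are transversal (non-tangent) to $\partial D$ at their endpoints, which is precisely the caveat the paper adds (``if $b-a$ is small enough, we can ensure that all the outer arcs are not tangent to the boundary''), or by bounding $T$ away from $0$ and $2\pi/\omega$ in the definition of the fixed-ends problem.
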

The proof is based on the regularity of $\gamma$, and on the differentiable dependence of the solution of a Cauchy problem on its initial data. In particular one can start from the existence of the homothetic outer solution at $\gamma(\bx)$ (for more details see \cite[Theorem 3.1]{deblasiterraciniellissi}). In particular, if $b-a$ is small enough, we can ensure that all the outer arcs are not tangent to the boundary.
Let us outline that the outer dynamics is \textit{local} in the sense that an outer arc connects two points on $\partial D$ belonging to a neighborhood of the same point $\gamma(\bx)$. 

Let us now pass to the inner dynamics. as already seen in Section \ref{sec:symb_dyn}, they act as \emph{transfer trajectories} between different regions od $\partial D$. It is then necessary to consider not only single central configurations but a set of the latters, having suitable mutual properties. 

\begin{proposition}\label{prop: esistenza interni}
	Let $\bx_1, \ldots, \bx_m$ be central configurations, $m\geq2$, which are not antipodal  if $m=2$. Then there exists $\mathcal A=\bigcup_{i\in \mathcal I}(a_i, b_i)$ that satisfies the inner-arc property and such that 
	$\bx_i\in(a_i,b_i)$, for every $i \in \mathcal I$.
\end{proposition}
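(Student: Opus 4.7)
The argument splits into two stages: a combinatorial/topological choice of the intervals, enforcing \textit{(IP0)} and \textit{(IP1)}, and an analytic construction of the inner arcs at large $h$ verifying \textit{(IP2)}.

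\textbf{Stage I (intervals).} By $\lsc{\bx_i}$ and continuity of $\gamma$, each $\bx_i$ admits an open neighbourhood $U_i\ni\bx_i$ on which $\lsc{U_i}$ still holds. Since $\gamma(\bx_i)\ne 0$ and the antipode map $\xi\mapsto -\gamma(\xi)$ is continuous, we can further shrink $U_i$ so that no two of its points are antipodally directed; this handles the self non-antipodality case of \textit{(IP1)}. For mutual non-antipodality between distinct configurations, observe that if $\bx_i$ and $\bx_j$ were antipodal then $\gamma(\bx_j)$ would lie on the ray opposite to $\gamma(\bx_i)$, and $\lsc{\bx_j}$ would force $\gamma(\bx_j)$ to be the \emph{only} point of $\partial D$ on that ray. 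Hence each $\bx_i$ has at most one antipodal partner among $\{\bx_1,\dots,\bx_m\}$: for $m\ge 3$ this gives at least $m-2\ge 1$ non-antipodal partners to every $i$, while for $m=2$ non-antipodality is the standing hypothesis. Because non-antipodality is an open condition, one final shrinking produces pairwise disjoint intervals $(a_i,b_i)\subset U_i$ with $\bx_i\in(a_i,b_i)$ verifying \textit{(IP0)} and \textit{(IP1)}.

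\textbf{Stage II (inner arcs).} Fix $\xi_1,\xi_2$ in two non-antipodal intervals and consider the fixed-ends problem \eqref{eq: problema interno} at energy $\mathcal E+h$. The strategy is to produce the \tnt\ solution as a perturbation of the broken radial curve $\gamma(\xi_1)\to 0\to \gamma(\xi_2)$, which is contained in $\overline D$ by $\lsc{(a_i,b_i)}$ and $\lsc{(a_j,b_j)}$. Passing to the Levi-Civita regularization as in \cite{Levi-Civita, deblasiterraciniellissi}, the regularized Kepler system at energy $\mathcal E+h$ is an $O\big(1/\sqrt{\mathcal E+h}\,\big)$ perturbation of an integrable harmonic oscillator in the regularizing variables, whose unperturbed orbits are preimages of the broken radial curve. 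Applying the implicit function theorem to the shooting map sending the incoming to the outgoing Levi-Civita endpoint in the \tnt\ branch, one obtains, for every $h$ above a threshold $h_{\xi_1,\xi_2}$, a unique \tnt\ Kepler arc joining $\gamma(\xi_1)$ to $\gamma(\xi_2)$. Containment in $\overline D$ persists in a neighbourhood of the unperturbed broken radial curve by continuity, so the arc lies in $\overline D$ for $h$ large.

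\textbf{Main obstacle.} The delicate step is making $h_0$ uniform in $(\xi_1,\xi_2)$: a priori the implicit function theorem yields a local threshold depending on the endpoints. By compactness of the closures $[a_i,b_i]$ and continuous dependence of the perturbed shooting map on the endpoints (in turn inherited from the $C^1$ regularity of $\gamma$), the map $(\xi_1,\xi_2)\mapsto h_{\xi_1,\xi_2}$ is locally bounded; taking $h_0$ as the supremum over the finitely many admissible non-antipodal pairs of closed subintervals gives the global threshold required by \textit{(IP2)}. Finally, the diagonal case $\xi_1=\xi_2$ is covered by the same construction, the Levi-Civita regularization producing the regularized reflected collision-ejection arc singled out in Definition \ref{def:prop_arco_int}.
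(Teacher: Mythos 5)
Your Stage I is essentially the paper's: shrink star-convex neighbourhoods of the $\bx_i$, use openness of non-antipodality and the combinatorial count to secure \textit{(IP0)} and \textit{(IP1)}. Stage II, however, diverges in a way worth flagging. The paper does \emph{not} obtain existence and uniqueness of the \tnt\ arc perturbatively: it invokes the classical Lambert-type result (Lemma \ref{lem:lemma_sus}, after \cite{battin} and \cite{bolotin2000periodic}), which gives a unique \tnt\ Keplerian arc between any two non-antipodal points for \emph{every} energy $E>0$, with an explicit formula. The large-$h$ threshold in \textit{(IP2)} therefore comes only from the containment requirement $z(s)\in D$, which the paper settles by the convergence of the arc to the broken radial line $\gamma(\xi_1)\,0\,\gamma(\xi_2)$ as $h\to\infty$, together with an explicit open set $A=\bigcup_i T_i\cup B_\rho(0)\subset D$ built from the star-convex cones. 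Your route buys nothing here and costs something: uniqueness is only produced for large $h$, and your uniformity discussion becomes necessary only because you chose the perturbative path.

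The genuinely under-justified step is your implicit function theorem application. In Levi--Civita variables the Kepler arc at energy $E=\mathcal E+h$ becomes an orbit of the harmonic \emph{repulsor} $w''=w$ at energy $E'=\mu/(2E)\to 0$, and the limiting object as $E'\to 0$ is not a regular orbit of an integrable system but a degenerate concatenation of a stable and an unstable ray of the saddle at $w=0$, traversed in transit time $\tilde T\sim-\log E'\to\infty$. A shooting map linearized at this limit is not covered by a routine IFT; making this rigorous requires a boundary-value analysis near the saddle (this is precisely the content of Lemma 4.1 and Proposition 6.1 of \cite{bolotin2000periodic}, which the paper imports rather than reproves). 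Either cite that result, as the paper does, or supply the saddle-passage estimates; as written, "applying the implicit function theorem to the shooting map" is asserting the hard part. A minor further point: \textit{(IP2)} requires $z(s)\in D$ (open) for $s\in(0,T)$, not $z(s)\in\overline D$; Hausdorff closeness to the broken line does not by itself prevent the arc from grazing $\partial D$ near its endpoints, which is why the paper's cone construction over the \emph{enlarged} intervals $(\tilde a_i,\tilde b_i)$ is there.
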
 

\begin{proof}
	Let us start by observing that if $m>3$ any central configuration $\bx_i$ admits at least a non-antipodal (different for $\bx_i$ itself) one. Then there exists a disjoint union of open intervals $\tilde{\mathcal A}=\bigcup_{i=1}^m(\tilde a_i, \tilde b_i)$ satisfying \textit{(IP1)} and such that $\partial D$ is $\lsc{(\tilde a_i. \tilde b_i)}$ for every $i$. Let us now take $\mathcal A=\bigcup_{i=1}^m(a_i, b_i)$ such that for every $i$ it holds $\tilde a_i<a_i<\bx_i<b_i<\tilde b_i$: we want to show that it satisfies \textit{(IP2)}. \\
	For every $h>0$, the existence and uniqueness of a \tnt arc connecting two non-antipodal points $\gamma(\xi_1)$ and $\gamma(\xi_2)$, $\xi_1,\xi_2\in\mathcal A,$ with energy $\mathcal E+h$, called $z_I(\cdot)\eqdef z_I(\cdot;\gamma(\xi_1), \gamma(\xi_2); \mathcal E+h )$, is guaranteed by Lemma \ref{lem:lemma_sus}. We have then to ensure that all these arcs are completely contained in the domain $D$. 
	
	 Defining, for every $\xi_1,\xi_2\in\mathcal A,$ the broken line  $c(\gamma(\xi_1)\,0\,\gamma(\xi_2))$ as the union of the two straight-line segments from $\gamma(\xi_1)$ to $0$ and from $0$ to $\gamma(\xi_2)$, from \cite[p. 274]{battin} one has that 
	\begin{equation}\label{eq: convergenza arco interno}
		\lim_{h\to\infty}dist\left(z_I\left([0, T]\right), c(\gamma(\xi_1)\,0\,\gamma(\xi_2))\right)=0,  
	\end{equation}
$T>0$ being such that $z_I(T)=\gamma(\xi_2)$. \\
\begin{figure}
	\centering
	\begin{overpic}[width=0.5\linewidth]{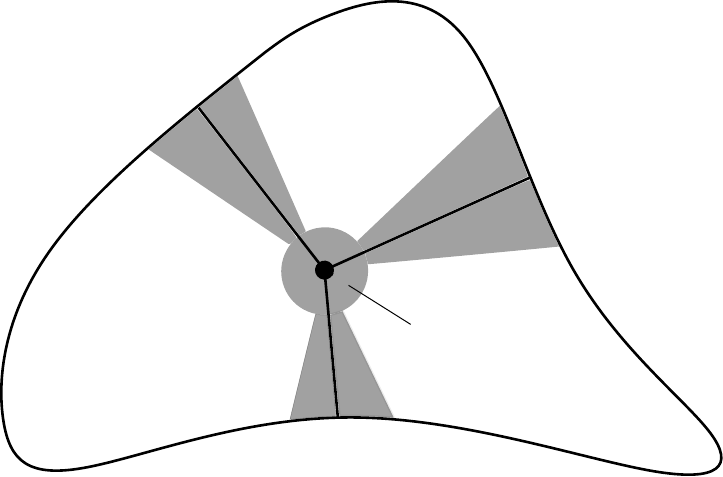}
		\put (17, 50) {\rotatebox{35}{$\gamma\left(\bx_2\right)$}}
		\put (74.5, 41) {$\gamma\left(\bx_1\right)$}
		\put (71, 50) {$\gamma\left(\tilde a_1\right)$}
		\put (78, 32) {$\gamma\left(\tilde b_1\right)$}
		\put (43, 1) {$\gamma\left(\bx_3\right)$}
		\put (26, 42.5) {$T_2$}
		\put (63, 40) {$T_1$}
		\put (47.5, 10) {$T_3$}
		\put (57, 19) {$B_\rho(0)$}
	\end{overpic}
	\caption{The open set $A$ introduced in the proof of Proposition \ref{prop: esistenza interni}. $A$ is the union of the circular neighborhood of the origin $B_\rho(0)$ and of the sectors $T_i$, $i=1, 2,3$.}
	\label{fig:prova archi interni}
\end{figure}
Let $\rho>0$ be such that  $B_\rho(0)\subsetneq D$ and, for every $i=1, \dots, m$, define the conic set
(see Figure \ref{fig:prova archi interni})
\begin{equation*}
	T_i=\left\{\lambda \gamma(\xi)\text{ }|\text{ }\lambda\in(0,1),\, \xi\in\left(\tilde a_i, \tilde b_i\right)\right\}, 
\end{equation*} 
and the open set
\begin{equation*}
	A = \bigcup_{i=1, \dots, N}T_i \cup B_\rho(0).
\end{equation*}
By the convergence in \eqref{eq: convergenza arco interno} and the finiteness of $m$, we can then ensure the existence of a threshold value for $h$ that ensures that $\mathcal A$ satisfied \emph{(IP2)}. 
\end{proof}
We stress that, possibly reducing the size of its intervals, we can find $\mathcal A$ satisfying both the inner and the outer-arc property. 

\begin{remark}\label{rem:hom}
	Propositions \ref{prop: esistenza esterni} and \ref{prop: esistenza interni} imply in particular the existence of radial solutions (collisional in the inner case) separately for the inner and outer dynamics in correspondence of every direction.
	Since at a central configuration the radial direction is orthogonal to the boundary $\partial D$, the corresponding solution is not deflected by Snell's law \eqref{eq:Snell1}. Hence, for any central configuration, our complete dynamics admits an homothetic ejection-collision solution in the direction $\gamma(\bx)$. 	
\end{remark}

\begin{remark}
	Propositions \ref{prop: esistenza esterni} and \ref{prop: esistenza interni} ensure the existence of the outer and inner dynamics separately, provided that the endpoints' parameters $\xi_1$ and $\xi_2$ are in suitable neighborhoods of $\bx_i$, $i=1, \dots, m$. Nevertheless, this is not sufficient to ensure 
	the good definition of the \emph{complete} dynamics, that is, a concatenation between outer and inner arcs satisfying the Snell's law.\\
	In particular, one has that the refraction exterior-interior is always possible, while 
	the converse, interior-exterior, can take place if and only if the inner arc is sufficiently transverse to the boundary. Hence, in order to prove the existence of a complete dynamics, we should find conditions to have uniform transversality properties of the inner arcs. On the other hand, this is not really necessary to our purposes: indeed, in Section \ref{sec:symb_dyn}, we have proved \emph{a posteriori} that the particular concatenations of outer and inner arcs that realize a symbolic dynamics for our problem
	are admissible trajectories for the complete dynamics. The transversality of an inner arc was in fact indirectly deduced from the validity of the variational formulation of Snell's law, along with the transversality of the subsequent outer arc.
\end{remark}

To link the presence of central configurations to the admissibility of the domain $D$ (see Definition \ref{def:D_admiss_intro}), we are left to guarantee the change-sign property. This will be true under some more assumptions on the central configurations themselves: we will say that a central configuration is \emph{strict} if it is a strict maximum or minimum for $\|\gamma(\cdot)\|$.
The following result is a direct consequence of the notion of strict extremal points for $C^1$ functions along with Proposition \ref{prop:cs_geom}.

\begin{proposition}\label{prop:cs_cc}
	Let $\bx_1, \ldots, \bx_m$ be strict central configurations, $m\geq2$, which are not antipodal  if $m=2$. Then there exists $\mathcal A=\bigcup_{i\in \mathcal I}(a_i, b_i)$ that satisfies the change-sign property.
\end{proposition}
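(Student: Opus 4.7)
My plan is to pass through the \emph{Euclidean} change-sign property and then invoke Proposition \ref{prop:cs_geom}, which already packages the dynamical estimates for large $h$. Thus the whole task reduces to choosing, for each strict central configuration $\bx_i$, a compact interval $[\alpha_i,\beta_i]\subset(a_i,b_i)$ on which the function $\xi\mapsto \|\gamma(\xi)\|$ has opposite one-sided behaviour at the two endpoints, i.e.
\[
\frac{d}{d\xi}\|\gamma(\xi)\|_{|_{\xi=\alpha_i}}\cdot \frac{d}{d\xi}\|\gamma(\xi)\|_{|_{\xi=\beta_i}}<0,
\]
which, via \eqref{eq:cos}, is exactly the Euclidean change-sign condition.

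The existence of such endpoints follows from the strict extremality of $\bx_i$. Since $\gamma\in C^1$ and $\bx_i$ is a critical point of $\|\gamma(\cdot)\|$, the derivative vanishes at $\bx_i$; the strict max/min condition forces, via the Mean Value Theorem applied on intervals $(\xi,\bx_i)$ and $(\bx_i,\xi)$, the existence of points arbitrarily close to $\bx_i$ on each side where $\tfrac{d}{d\xi}\|\gamma(\xi)\|$ attains opposite signs (positive on the left and negative on the right for a maximum, the reverse for a minimum). I pick $\alpha_i$ and $\beta_i$ among such points, inside the neighbourhood $(a_i,b_i)$ provided by the earlier constructions. This furnishes the Euclidean change-sign property for $\mathcal A=\bigcup_{i\in\mathcal I}(a_i,b_i)$, and Proposition \ref{prop:cs_geom} then upgrades it to the change-sign property for any $h>h_1$ with a uniform threshold coming from the uniform $C^1$ bounds on the remainder term $F_1$ and on $\partial_b S_E$ used in that proof.

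The only coordination step is to make sure that the intervals $[\alpha_i,\beta_i]$ live inside the neighbourhoods $(a_i,b_i)$ already needed for the outer-arc and inner-arc properties; since Propositions \ref{prop: esistenza esterni} and \ref{prop: esistenza interni} only require us to \emph{shrink} these neighbourhoods around $\bx_i$, I simply take the intersection of all such neighbourhoods. The main subtlety I would watch for is that the Euclidean sign-change points provided by the MVT may cluster arbitrarily close to $\bx_i$; this is not a problem for the change-sign property itself, but one has to verify that the resulting compact intervals $[\alpha_i,\beta_i]$ still carry the uniform boundedness of $\partial_b S_E$ that Proposition \ref{prop:cs_geom} invokes. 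This, however, is immediate from compactness and the $C^1$ regularity of $\gamma$, so no further work is needed.
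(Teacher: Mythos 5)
Your proposal is correct and follows exactly the route the paper takes: the paper states that Proposition \ref{prop:cs_cc} is a direct consequence of the notion of strict extremal points for $C^1$ functions together with Proposition \ref{prop:cs_geom}, i.e.\ strict extremality of $\|\gamma(\cdot)\|$ at $\bx_i$ yields the Euclidean change-sign property on suitable $[\alpha_i,\beta_i]\subset(a_i,b_i)$, which is then upgraded by Proposition \ref{prop:cs_geom}. Your MVT argument and the remark about shrinking the neighbourhoods to be compatible with the outer- and inner-arc properties simply make explicit what the paper leaves implicit.
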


\begin{remark}\label{rem:dominant2}
Let us fix $i \in \mathcal{I}$ and suppose that $\bx_i$ is a strict minimum for $\|\gamma(\cdot)\|$; let now $[\alpha_i,\beta_i]$ be the compact neighborhood provided by the change-sign property.
Recalling Lemma \ref{lem:lemma_sus}	and the geometric interpretation of the derivatives of the Jacobi distances given in Eq. \eqref{eq: derivate S} one can deduce that, for $h$ large enough, every inner arc starting from (or arriving to) $\gamma(\alpha_i)$ form with $\dot\gamma(\alpha_i)$ an angle strictly greater than $\pi/2$; the inequality is reversed if we consider $\gamma(\beta_i)$ (see Figure \ref{fig:stime-sugli-angoli}, left).
The situation is symmetric if $\bx_i$ is a strict maximum (see Figure \ref{fig:stime-sugli-angoli}, right). \\
In view of Remark \ref{rem:dominant_part}, is exactly this asymptotic behavior that let us pass from the Euclidean change-sign property to the change-sign property itself.
\end{remark}

\begin{figure}
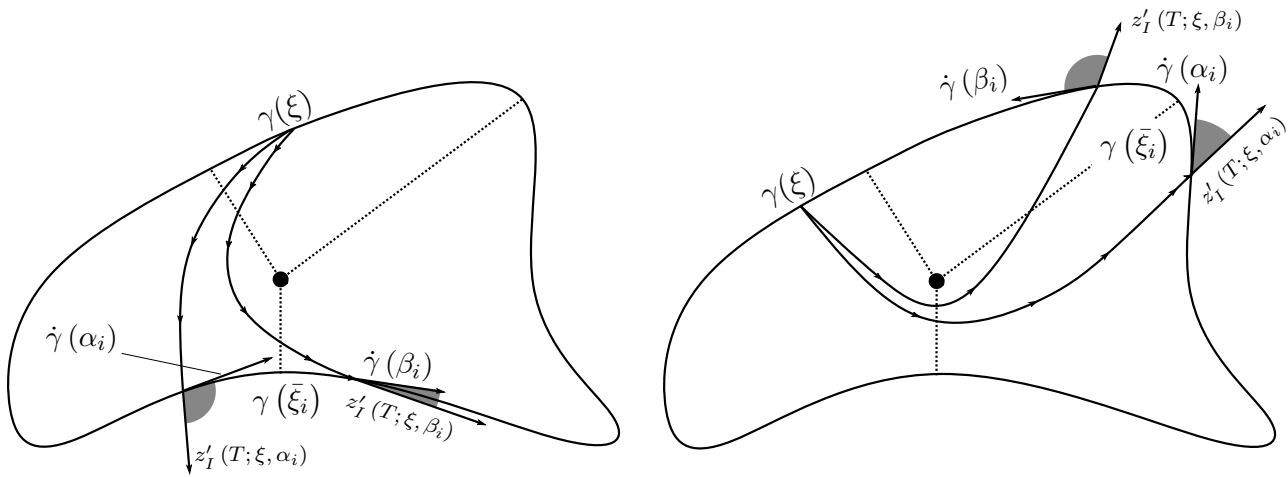

	\centering
	\begin{overpic}[width=1\linewidth]{"immagini/stimesugliangoli2"}
		\put (19, 27) {\rotatebox{20}{$\gamma(\xi)$}}
		\put (19, 5.1) {\rotatebox{0}{\small$\gamma\left(\bx_i\right)$}}
		\put (3, 10) {\rotatebox{0}{\small$\dot\gamma\left(\alpha_i\right)$}}
		\put (14.5, 1) {\rotatebox{0}{\tiny$z'_I\left(T; \xi, \alpha_i\right)$}}
		\put (26, 5.3) {\rotatebox{-17}{\tiny$z'_I\left(T; \xi, \beta_i \right)$}}
		\put (27.5, 8.2) {\rotatebox{-5}{\small$\dot\gamma\left(\beta_i\right)$}}
		\put (58, 21) {\rotatebox{20}{$\gamma(\xi)$}}
		\put (87, 35) {\rotatebox{0}{\tiny$z'_I\left(T; \xi, \beta_i\right)$}}
		\put (92, 21) {\rotatebox{46}{\tiny$z'_I\left(T; \xi, \alpha_i\right)$}}
		\put (72, 29.5) {\rotatebox{10}{\small$\dot\gamma\left(\beta_i\right)$}}
		\put (89, 31) {\rotatebox{0}{\small$\dot\gamma\left(\alpha_i\right)$}}
		\put (84, 25) {\rotatebox{0}{\colorbox{white!10}{\strut\small$\gamma\left(\bx_i\right)$}}}
	\end{overpic}
	\caption{Asymptotic behavior of inner arcs, for large inner energies, in neighborhoods of strict central configurations (see Remark \ref{rem:dominant2}).}
	\label{fig:stime-sugli-angoli}
\end{figure}

Given the above construction, it turns out that the domain $D$ is admissible and Theorem \ref{thm:sym_dyn_cc} follows.

\section{Non integrability and chaos}\label{sec:non_int_chaos}
%
Until now we focused on the construction of a symbolic dynamics, which is a substantial intermediate step to the presence of chaos. In this section we fill the gap between the two concepts providing also some statements on the non integrability of the model.

{Let us start with some considerations related to the presence of infinitely many heteroclinic connections between different homothetic ejection-collision trajectories (on this topic see for example \cite{BalGilGua2022,GuaMarSea2016,GuaParSeaVid2021,MR1765636}). We will present results which hold under some further assumptions on central configurations, according to the following definition. 
	\begin{definition}
		\label{def:admiss_non_deg}	
		A central configuration $\bx$ (according to Definition \ref{def:cc}) is termed \emph{non degenerate} if $\gamma$ is of class $C^2$ in a neighborhood of $\bar \xi$ and 
		\[
		\frac{d^2}{d\xi^2}\|\gamma(\xi)\|_{|_{\xi=\bx}} \neq 0.
		\]
	\end{definition}
	
	\begin{proposition}\label{prop:selle}
		Suppose $D$ admits a non-degenerate central configuration $\bx$. 
		Then if $h$ is large enough, the homothetic trajectory in the direction of $\gamma\left(\bx\right)$ is a hyperbolic saddle equilibrium. 
	\end{proposition}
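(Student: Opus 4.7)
The plan is to show that the homothetic orbit through $\gamma(\bx)$ corresponds to a hyperbolic fixed point of a suitable Poincaré return map, with two real eigenvalues off the unit circle. I would fix a section $\Sigma$ at $\gamma(\bx)$ on the outward side of $\partial D$, parametrized by the tangent coordinate $\xi$ near $\bx$ and the tangential component of the velocity on the outer energy shell. The first return map $P$ follows the concatenation: outer harmonic arc to the Hill turning point and back, Snell refraction inward, inner Levi-Civita-regularized ejection-collision arc, Snell refraction outward, and the homothetic trajectory is the fixed point $(\bx, 0)$. Since the reduced phase space has one degree of freedom and the flow is Hamiltonian, $M := dP(\bx,0)$ is a $2\times 2$ symplectic matrix, so hyperbolicity is equivalent to $|\operatorname{tr} M| > 2$.

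I would then decompose $M$ as the product of linearizations of each piece: $L_E$ for the outer half-flow (explicit, since the outer potential is harmonic), $L_{EI}$ and $L_{IE}$ for the two refractions (symplectic $2\times 2$ blocks depending on the tangent direction, on the Snell ratio $\sqrt{V_I(\gamma(\bx))/V_E(\gamma(\bx))}$, and on the curvature $\kappa := \frac{d^2}{d\xi^2}\|\gamma(\xi)\|_{|_{\xi=\bx}}\neq 0$), and $L_I(h)$ for the regularized inner ejection-collision. The outer and refraction factors have $O(1)$ entries uniformly in $h\ge h_0$. The core step is the asymptotic analysis of $L_I(h)$ as $h\to\infty$: in Levi-Civita variables $z = u^2$, $ds = |u|^2\,dt$, the inner Kepler flow on the energy level $\mathcal E + h$ becomes a harmonic oscillator of frequency $\sqrt{(\mathcal E+h)/2}$, and the full ejection-collision arc corresponds to one half-period of this oscillator. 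Tracking a transverse perturbation through this half-period and converting back to physical variables at the exit point yields an expansion
\[
L_I(h) = \sqrt{h}\,A_1 + A_0 + O(h^{-1/2}),
\]
where $A_1$ has a nontrivial off-diagonal block arising from the change of variables between physical and Levi-Civita coordinates at the boundary crossing. Composing with the $O(1)$ outer and refraction factors gives $\operatorname{tr} M = c_1(\bx)\,\kappa\,\sqrt{h} + O(1)$ with $c_1(\bx)\neq 0$, so $|\operatorname{tr} M| > 2$ for all $h$ large enough. Therefore $M$ has two real eigenvalues $\lambda, 1/\lambda$ with $|\lambda|\neq 1$, and $(\bx,0)$ is a hyperbolic saddle of $P$.

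The main obstacle will be performing the Levi-Civita linearization rigorously across the collision, since perturbations of collisional Keplerian orbits are singular in physical coordinates and have to be unfolded via the regularization, and checking that the coefficient $c_1(\bx)$ is genuinely nonzero for every non-degenerate central configuration rather than vanishing because of accidental cancellations among the two refractions, the outer half-flow and the reflection at the Keplerian center. For this I would adapt the explicit computation already carried out for the homothetic trajectory in \cite{deblasiterraciniellissi}, expanding $\|\gamma(\xi)\|$ to second order at $\bx$ and exploiting the symplecticity of each individual block to reduce the number of independent matrix entries to be tracked; the assumption $\kappa\neq 0$ from Definition \ref{def:admiss_non_deg} enters precisely here to guarantee that the leading term in $\operatorname{tr} M$ is of order $\sqrt h$ and not $O(1)$.
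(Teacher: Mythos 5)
Your proposal follows essentially the same route as the paper: the paper's proof also constructs a first return map around the homothetic trajectory, linearizes it, and invokes the explicit asymptotic computation of \cite{deblasiterraciniellissi} (Remark 5.1 there) to show that the discriminant $\Delta$ of the characteristic polynomial of the Jacobian is positive for large $h$ when $\frac{d^2}{d\xi^2}\|\gamma(\xi)\|_{|_{\xi=\bx}}\neq 0$ — which, by symplecticity, is exactly your criterion $|\operatorname{tr} M|>2$. One small correction: at the positive inner energy $\mathcal E+h$ the Levi-Civita transform turns the ejection-collision Kepler arc into a harmonic \emph{repulsor} $w''=\tfrac{\mathcal E+h}{2}\,w$ (cf.\ Eq.\ \eqref{eq:prob_LC}), so the regularized arc is a hyperbolic-function passage through $w=0$ rather than a half-period of an oscillator; this does not change the structure of your asymptotic expansion.
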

	\begin{proof}
		The proof of this result relies on asymptotic estimates based on \cite[Remark 5.1]{deblasiterraciniellissi}. Here, in particular, a general domain is considered, and, after the construction of a suitable first return map $f$, the stability of the homothetic trajectories is investigated in relation to the local geometric features of the boundary and the value of the physical parameters $\mathcal E, h, \omega, \mu$. The inspection is carried on by considering the sign of the discriminant $\Delta$ of the  characteristic polynomial associated to the Jacobian matrix of $f$ centered in an homothetic direction. By straightforward estimates, one can prove that, as long as the considered homothetic direction is nondegenerate, $\lim_{h\to\infty}\Delta>0$, and then $\bx$ is a saddle point. 
	\end{proof}
	 
Let us now assume that we are in the setting of Theorem \ref{thm:sym_dyn_cc} and let $[\alpha_i,\beta_i]$, $i \in \mathcal I$, be the compact neighborhood around each central configuration used to construct the symbolic dynamics.
The next result is a straightforward consequence of Propositions \ref{prop:mezze_etero} and \ref{prop:selle}.
	
\begin{corollary}\label{cor:mezze_etero}
	Let us suppose that there exist $\bx_1, \dots, \bx_m$ strict central configurations, not antipodal if $m=2$ and that $\bx_1$ is non-degenerate.
	Then, if $h$ is large enough, for every $\xi \in \bigcup_i[\alpha_i,\beta_i]$ there exist infinitely many half-heteroclinic connections tending forward (resp. backward) to the homothetic trajectory in the direction of $\gamma\left(\bx_1\right)$.
\end{corollary}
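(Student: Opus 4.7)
The plan is to combine the hyperbolic structure provided by Proposition \ref{prop:selle} with the realization theorem for infinite admissible words (Proposition \ref{prop:mezze_etero}). By Proposition \ref{prop:selle}, for $h$ sufficiently large the homothetic ejection--collision trajectory in the direction of $\gamma(\bx_1)$ corresponds to a hyperbolic saddle fixed point $p_1$ for the first-return map $\mathcal{F}$. The local stable manifold theorem then provides a neighborhood $U_1 \subset X$ of $p_1$ and a local stable manifold $W^s_{\mathrm{loc}}(p_1)$ characterized by the property that an initial condition belongs to it if and only if all its forward iterates under $\mathcal{F}$ remain in $U_1$. Possibly shrinking the intervals $[\alpha_i,\beta_i]$ from the beginning (which is harmless since all previous results are preserved by such a reduction), we may assume that $\chi^{-1}(1) \subset U_1$, i.e.\ visits to the interval $[\alpha_1,\beta_1]$ correspond to iterates of $\mathcal F$ falling in $U_1$.

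Fix now $\xi \in [\alpha_j,\beta_j]$ for some $j \in \mathcal I$. Consider the set of admissible forward words
\[
W_\xi \eqdef \left\{ \underline\ell \in \mathcal L^+ \;:\; \ell_0 = j, \ \exists N\ge 1 \text{ s.t. } \ell_k = 1 \text{ for all } k\ge N\right\}.
\]
The constant word $(1,1,\ldots)$ is admissible because the non-antipodality hypothesis ensures $1 \in NA(1)$; more generally, the hypothesis of the corollary guarantees the existence of admissible tails eventually constant at the symbol $1$, so $W_\xi$ is infinite. For each $\underline \ell \in W_\xi$, Proposition \ref{prop:mezze_etero} produces a velocity $v(\underline\ell)\in\R^2$ with $(\xi,v(\underline\ell))\in X$ and $\pi(\xi,v(\underline\ell)) = \underline\ell$.

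The key step is to upgrade "eventually in the neighborhood" to "asymptotic to the homothetic orbit". By construction, for every $\underline\ell\in W_\xi$ there exists $N = N(\underline\ell)$ such that $\mathcal F^k(\xi,v(\underline\ell))\in\chi^{-1}(1)\subset U_1$ for all $k\ge N$. By the characterization of the local stable manifold recalled above, $\mathcal F^N(\xi,v(\underline\ell)) \in W^s_{\mathrm{loc}}(p_1)$, and therefore
\[
\mathcal F^k(\xi,v(\underline\ell)) \longrightarrow p_1 \quad\text{as } k \to \infty.
\]
Translated back to the continuous flow, the trajectory issued from $(\gamma(\xi),v(\underline\ell))$ tends forward in time to the homothetic ejection--collision orbit at $\gamma(\bx_1)$, hence is a half-heteroclinic connection in the sense of the statement. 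Distinct words in $W_\xi$ yield distinct symbolic codings and therefore distinct initial velocities, producing infinitely many such half-heteroclinics. The backward statement follows by the same argument applied to the inverse map $\mathcal F^{-1}$ (which has $p_1$ as a hyperbolic saddle as well) and to the analog of Proposition \ref{prop:mezze_etero} for admissible words indexed on $-\mathbb N$.

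The only delicate point is the identification of trajectories staying forever in $\chi^{-1}(1)$ with points on $W^s_{\mathrm{loc}}(p_1)$: this requires the compact intervals $[\alpha_i,\beta_i]$ to be contained in the hyperbolic neighborhood of $p_1$ on which the stable manifold theorem applies. This is compatible with all previous constructions, since the results of Sections \ref{sec:symb_dyn} and \ref{sec:adm_dom_cc} are stable under further shrinking of the intervals, at the possible cost of increasing the threshold energy $h$.
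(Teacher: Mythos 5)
Your proof is correct and follows exactly the route the paper intends: the paper states this corollary without proof, presenting it as a straightforward consequence of Propositions \ref{prop:mezze_etero} and \ref{prop:selle}, and your argument is precisely the assembly of those two ingredients (realize admissible forward words with tail $(1,1,\dots)$, then use the hyperbolic saddle structure to upgrade ``forever returning to $[\alpha_1,\beta_1]$'' to ``asymptotic to the homothetic orbit''). The two points you flag yourself --- that $\chi^{-1}(1)$ must be contained in the neighbourhood where the local stable manifold characterization holds, and that one needs an admissible path in the non-antipodality graph from an arbitrary symbol $j$ to the symbol $1$ --- are indeed the only places where detail is missing, and they are equally glossed over in the paper.
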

	
As far as more than one central configuration is non degenerate, Proposition \ref{prop:mezze_etero} allows to construct heteroclinic connections between different saddle points.

\begin{corollary}\label{cor:hetero}
	Let us suppose that there exist $\bx_1, \dots, \bx_m$ strict central configurations, not antipodal if $m=2$.
	Then, if the energy jump $h$ is large enough, for every pair of non-degenerate central configurations $\bx_i$, $\bx_j$, $i\neq j$, there exist infinitely many heteroclinic connections between the homothetic trajectories in the direction of $\gamma\left(\bx_i\right)$ and $\gamma\left(\bx_j\right)$.
\end{corollary}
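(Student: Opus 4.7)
The plan is to combine the symbolic-dynamics framework developed in Section 2 with the hyperbolic saddle structure at non-degenerate central configurations provided by Proposition \ref{prop:selle}. The construction mimics that of Proposition \ref{prop:mezze_etero} but for bi-infinite words whose tails are eventually constant, one in each direction, equal to the labels of the two target saddles.

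First, I would invoke Proposition \ref{prop:selle} for both $\bar\xi_i$ and $\bar\xi_j$: for $h$ large enough, the first-return map $\mathcal F$ admits two hyperbolic saddle fixed points $P_i$ and $P_j$ corresponding to the homothetic trajectories in the directions $\gamma(\bar\xi_i)$ and $\gamma(\bar\xi_j)$, each carrying a one-dimensional local stable and unstable manifold. Since $\bar\xi_i$ and $\bar\xi_j$ are non-degenerate, the stretching factor of $D\mathcal F$ at $P_i$ and $P_j$ can moreover be made as large as we wish by further increasing $h$, by the asymptotic estimates underlying Proposition \ref{prop:selle}. This allows us, up to replacing each $[\alpha_r,\beta_r]$ by a smaller compact neighbourhood of $\bar\xi_r$ (raising $h_1$ accordingly so that the change-sign property still holds), to assume that each $[\alpha_r,\beta_r]$ lies inside the domain of the local stable/unstable manifold theorem around $P_r$.

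Next, I would consider the family of bi-infinite admissible words
\begin{equation*}
\underline\ell^{(w)} \;=\; (\ldots,i,i,i,\,w_0,w_1,\ldots,w_{k-1},\,j,j,j,\ldots),
\end{equation*}
parametrized by finite admissible strings $w=(w_0,\ldots,w_{k-1})$ for which the junctions $(i,w_0)$ and $(w_{k-1},j)$ respect the non-antipodality condition of Definition \ref{def:adm_word_setL}. Each $\underline\ell^{(w)}\in\mathcal L$, so by Proposition \ref{prop:pi_suriettiva} there is $(\xi^{(w)},v^{(w)})\in X$ with $\pi(\xi^{(w)},v^{(w)})=\underline\ell^{(w)}$, giving a complete trajectory $z^{(w)}$ whose forward (resp. backward) $\mathcal F$-iterates eventually remain in $[\alpha_j,\beta_j]$ (resp. $[\alpha_i,\beta_i]$).

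The key step, which I expect to be the main obstacle, is to upgrade this symbolic statement to true asymptotic convergence to the homothetic orbits at $\gamma(\bar\xi_i)$ and $\gamma(\bar\xi_j)$. Here the hyperbolic structure enters: having arranged that the relevant compact intervals lie inside the validity region of the local invariant manifolds of $P_i$ and $P_j$, any orbit of $\mathcal F$ whose forward iterates are trapped there must belong to the local stable manifold of $P_j$ (hence converges to $P_j$ in forward time), and symmetrically for the unstable manifold of $P_i$ in backward time. Translated to the flow, this gives that $z^{(w)}$ is a genuine heteroclinic connection between the two homothetic saddle trajectories. Finally, to produce infinitely many such connections it suffices to note that, since $m\geq 2$ and the inner-arc property guarantees at least one symbol non-antipodal to each of $i$ and $j$, the set of admissible middle strings $w$ is infinite; distinct choices of $w$ yield trajectories visiting distinct sequences of compact intervals in their middle portion, hence produce geometrically distinct heteroclinics.
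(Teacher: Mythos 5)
Your proposal is correct and follows essentially the same route as the paper: realize the bi-infinite word $(\dots,i,i,i,[\,\cdot\,],j,j,j,\dots)$ via the surjectivity of $\pi$ (Proposition \ref{prop:pi_suriettiva}) and conclude from the hyperbolicity given by Proposition \ref{prop:selle} that the resulting orbit lies on the unstable manifold of the saddle at $\gamma(\bx_i)$ and the stable manifold of that at $\gamma(\bx_j)$, with infinitude obtained by varying the finite middle word. Your additional care in placing the compact intervals inside the domain of validity of the local invariant manifold theorem only makes explicit a step the paper leaves implicit.
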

\begin{proof}
	Let us suppose that $h$ is large enough such that both Theorem \ref{thm:sym_dyn_cc} and Proposition \ref{prop:selle} hold. Now, call $\bar z_i$ and $\bar z_j$ the corresponding homothetic trajectories in the direction of  $\gamma\left(\bx_i\right)$ and $\gamma\left(\bx_j\right)$, and consider the bi-infinite (non-periodic) word $\underline \ell=(\dots, i, i, i,[\ldots], j, j, j, \dots)$, where $[\ldots]$ denotes any word of finite length such that $\underline{\ell} \in \mathcal{L}$. In view of Proposition \ref{prop:pi_suriettiva} we define the sequence $(\hat \xi_k)_{k\in\mathbb Z}$ which realizes $\underline \ell$. By the hyperbolicity of both $\bar z_i$ and $\bar z_j$ as equilibrium trajectories, this sequence must belong to the unstable manifold of $\bar z_i$ as well as to the stable manifold of $\bar z_j$. 
\end{proof}}

We now connect the presence of infinitely many half-heteroclinics  to the analytic non integrability of our dynamical systems.  This result is obtained by adapting a classical argument by Kozlov (\cite{Koz1983}) and makes use of Proposition \ref{prop:mezze_etero} and Corollary \ref{cor:mezze_etero}.

\begin{theorem}\label{thm:no_an_int}
	Let us suppose that there exist $\bx_1, \dots, \bx_m$ strict central configurations, not antipodal if $m=2$, and assume that $\bx_1$ is non-degenerate. 
	Then, if $h$ is large enough, there are no analytic first integrals associated to the dynamics which are not constant.
\end{theorem}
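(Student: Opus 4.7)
The plan is to argue by contradiction, following the classical strategy of Kozlov \cite{Koz1983}. Suppose that $F$ is an analytic first integral of our dynamics which is not constant. By the non-degeneracy of $\bx_1$ and Proposition \ref{prop:selle}, for $h$ sufficiently large the homothetic trajectory $\bar z_1$ in the direction of $\gamma(\bx_1)$ is a hyperbolic saddle equilibrium for the first-return map $\mathcal F$ constructed in Section \ref{ssec:dinamicaSimbolica}; let $P_1 = (\bx_1,v_1) \in \Xi$ denote the corresponding fixed point and set $c_1 \eqdef F(P_1)$. Since $F$ is constant along orbits of $\mathcal F$ and continuous at $P_1$, the identity $F \equiv c_1$ must hold on the local stable manifold $W^s(P_1)$.

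The heart of the argument is then to invoke Corollary \ref{cor:mezze_etero}. For every $\xi \in [\alpha_1,\beta_1]$ it produces infinitely many distinct velocities $\{v_n(\xi)\}_{n\in\N} \subset B_E^\xi$ for which the orbit based at $(\gamma(\xi),v_n(\xi))$ is a forward half-heteroclinic to $\bar z_1$, so that $(\xi,v_n(\xi)) \in W^s(P_1)$. Compactness of the energy sphere $B_E^\xi$ yields an accumulation point $\bar v$, and the restriction of $F$ to the one-dimensional fiber $\{\xi\} \times B_E^\xi$ is an analytic function of a single angular parameter taking the value $c_1$ on the infinite set $\{v_n(\xi)\}$. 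By the one-variable identity principle, $F(\xi,\cdot) \equiv c_1$ on the whole fiber.

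Letting $\xi$ vary in $(\alpha_1,\beta_1)$ then gives $F \equiv c_1$ on the open two-dimensional set $U \eqdef \{(\xi,v) \in \Xi^+_1 : \xi \in (\alpha_1,\beta_1)\}$, which is transverse to the Hamiltonian flow. Saturating $U$ by the inner and outer flows (gluing via Snell refraction, under which $F$ is preserved by orbit-invariance) produces an open three-dimensional subset of the energy surface on which $F \equiv c_1$. Analyticity of $F$ on each smooth piece of phase space then forces $F \equiv c_1$ globally, contradicting the non-constancy of $F$.

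The principal obstacle is to give a workable meaning to the notion of an \emph{analytic first integral} in the present setting, where the potential is discontinuous across $\partial D$ and the flow itself is only piecewise smooth: concretely, one should work with the real-analytic first-return map $\mathcal F$ on the cross-section $\Xi^+_r$, require $F$ to be analytic on a neighborhood of $P_1$ in the section, and then propagate the identity $F \equiv c_1$ across refractive transitions by invariance along complete orbits. A secondary point is to ensure that the accumulation velocity $\bar v$ lies in the interior of $B_E^\xi$, so that the identity principle can be cleanly applied; this should follow from the fact that $W^s(P_1)$ is a local $C^1$ submanifold of the section together with the compactness of the energy sphere.
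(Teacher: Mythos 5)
Your proposal is correct and follows essentially the same route as the paper: hyperbolicity of the homothetic trajectory via Proposition \ref{prop:selle}, the accumulation of infinitely many half-heteroclinics on each fiber from Proposition \ref{prop:mezze_etero}/Corollary \ref{cor:mezze_etero}, inclusion of the stable manifold in a level set, and the one-variable identity principle applied fiberwise. The paper resolves your ``principal obstacle'' exactly as you suggest, by working with a first integral defined on an open neighbourhood of the cross-section in the coordinates $(\xi,\alpha)$, with $\alpha$ the angle of the outgoing velocity to the normal.
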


\begin{proof}
	On every initial condition $(\xi,v) \in X$, the first return map, its inverse and all their iterates are well defined. In particular, let us observe that, fixed $\xi\in [0, L]$, every outward-pointing velocity vector $v$ starting from $\gamma(\xi)$ is uniquely determined by the angle $\alpha\in(-\pi/2,\pi/2)$ between $v$ and the normal unit vector to $\gamma$ at $\gamma(\xi)$. Using $\alpha$ as a new variable, the initial conditions  corresponding to the homothetic arcs will be denoted with $(\bx_j, 0)$, $j=1,\ldots,m$. By construction, and with a slight abuse of notation, there exists $\alpha_0>0$ such that
	\[
	X \subset \mathcal{U} \eqdef \bigcup_{i=1}^m [\alpha_i,\beta_i]\times (-\alpha_0,\alpha_0).
	\]  
	Let now $G:\mathcal{O} \to \mathbb{R}$, with $\mathcal{O}$ an open set containing $\mathcal{U}$, be an analytic first integral and let $c \in \mathbb{R}$ be such that $G(\bx_1,0)=c$.
	If $h$ is large enough, by Proposition \ref{prop:selle}, the stable and unstable manifolds of $(\bx_i,0)$ are contained in the same level set $\{G=c\}$. \\
	Fix now $\hat \xi \in \bigcup_{i=1}^m [\alpha_i,\beta_i]$: Proposition \ref{prop:mezze_etero} ensures that there exist infinite pairs $(\hat \xi,\alpha) \in \{\hat \xi\} \times (-\alpha_0,\alpha_0)$ that belong to the stable manifold of $(\bx_1,0)$. 
	This means that the $c$-level of the analytic function $G(\hat\xi,\cdot):(-\alpha_0,\alpha_0) \to \mathbb R$ admits an accumulation point. Hence this function is constant. We conclude the proof by the arbitrarity of $\hat \xi$.  
\end{proof}

We are now ready to state the results which prove the chaoticity of our model.
	\begin{lemma}\label{lem:pi_iniettiva}
		Let us suppose that there exist $\bx_1, \dots, \bx_m$ non-degenerate central configurations, not antipodal if $m=2$. Possibly restricting the intervals, assume that the function $\|\gamma(\cdot)\|$ is strictly concave or convex in $[\alpha_i,\beta_i]$, for every $i=1, \dots, m$.\\ Then, fixed $i\in\{1, \ldots, m\}$, $\xi_E\in [\alpha_i, \beta_i]$ and $\xi_I\in\bigcup_{j\in NA(j)}[\alpha_j, \beta_j]$, for $h$ large enough the quantities 
		\begin{equation}\label{eq:der_sec_S}
			\partial^2_bS_E(\xi_E, \cdot)+\partial^2_aS_I(\cdot, \xi_I; h), \quad \partial^2_bS_I(\xi_I, \cdot; h)+\partial^2_aS_E(\cdot, \xi_E)
		\end{equation}
	have constant sign in $[\alpha_i,\beta_i]$. 
	\end{lemma}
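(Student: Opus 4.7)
The plan is to show that both expressions in \eqref{eq:der_sec_S} are dominated, for large $h$, by the second-derivative-of-norm contribution coming from the inner Jacobi length, while the outer contribution stays bounded. More precisely, I will use the asymptotic expansion of $S_I$ provided by Lemma \ref{lem:lemma_sus} (and already invoked in the proof of Proposition \ref{prop:cs_geom}), differentiate it twice in one of the endpoint parameters, and compare the result with $\partial^2_a S_E$ or $\partial^2_b S_E$, which do not depend on $h$.

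Concretely, for $\xi \in [\alpha_i,\beta_i]$ and $\xi_I$ in a non-antipodal compact interval $[\alpha_j,\beta_j]$, the expansion
\begin{equation*}
S_I(\xi, \xi_I;h)=\sqrt{\mathcal E+h}\bigl(\|\gamma(\xi)\|+\|\gamma(\xi_I)\|\bigr) +\frac{\mu}{\sqrt{\mathcal E+h}}\Bigl(F_1(\xi,\xi_I; \mathcal E+h)-\log\!\tfrac{\mu}{2(\mathcal E+h)}\Bigr)
\end{equation*}
yields, after differentiating twice with respect to the first argument,
\begin{equation*}
\partial^2_a S_I(\xi, \xi_I;h) = \sqrt{\mathcal E+h}\,\frac{d^2}{d\xi^2}\|\gamma(\xi)\| + \frac{\mu}{\sqrt{\mathcal E+h}}\,\partial^2_a F_1(\xi,\xi_I;\mathcal E+h).
\end{equation*}
By the strict convexity (resp.\ concavity) assumption on $\|\gamma(\cdot)\|$ and the $C^2$-regularity of $\gamma$, the quantity $\tfrac{d^2}{d\xi^2}\|\gamma(\xi)\|$ has constant sign and, by compactness of $[\alpha_i,\beta_i]$, is bounded away from zero by some $c_0>0$. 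On the other hand, the outer term $\partial^2_b S_E(\xi_E,\xi)$ is a $C^0$ function of $(\xi_E,\xi)$ on the compact square $[\alpha_i,\beta_i]^2$ and does not depend on $h$, so it is uniformly bounded by a constant $C$. Provided the remainder $\tfrac{\mu}{\sqrt{\mathcal E+h}}\,\partial^2_a F_1$ is $o(\sqrt h)$ uniformly in the endpoints, we obtain
\begin{equation*}
\Bigl|\partial^2_b S_E(\xi_E,\xi)+\partial^2_a S_I(\xi,\xi_I;h)\Bigr|\geq c_0\sqrt{\mathcal E+h}-C-o(1),
\end{equation*}
and the sign of the sum coincides with that of $\tfrac{d^2}{d\xi^2}\|\gamma(\xi)\|$ for $h$ large enough. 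Since the latter has constant sign throughout $[\alpha_i,\beta_i]$, the first expression in \eqref{eq:der_sec_S} has constant sign. The argument for the second expression $\partial^2_b S_I(\xi_I,\cdot;h)+\partial^2_a S_E(\cdot,\xi_E)$ is completely analogous, using the symmetric expansion and differentiation with respect to the second argument of $S_I$.

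The main obstacle is the control of the remainder term $\partial^2_a F_1$: the statement of Lemma \ref{lem:lemma_sus} recalled in the text guarantees only $C^1$-uniform bounds of $F_1$ in the endpoint parameters as $h\to\infty$. To close the argument, one has to revisit the construction of $F_1$ (Levi-Civita regularization and the resulting integral representation) and verify, via the chain rule together with the $C^2$-regularity of $\gamma$ near each central configuration, that the second derivatives with respect to the endpoint parameters remain uniformly bounded in $h$. Granted this extension, the threshold $h$ can be chosen uniformly in $(\xi_E,\xi_I)\in[\alpha_i,\beta_i]\times\bigcup_{j\in NA(i)}[\alpha_j,\beta_j]$ by compactness, completing the proof.
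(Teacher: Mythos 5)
Your argument is correct and is essentially identical to the paper's own proof: expand $S_I$ via Lemma~\ref{lem:lemma_sus}, observe that the $\sqrt{\mathcal E+h}\,\frac{d^2}{d\xi^2}\|\gamma(\xi)\|$ term dominates the $h$-independent (hence bounded) outer term and the $O(h^{-1/2})$ remainder, and conclude by strict convexity/concavity and compactness. The ``main obstacle'' you flag is not actually an obstacle: Lemma~\ref{lem:lemma_sus} already asserts that $F$ is $C^2$-bounded uniformly in the endpoints as $E\to\infty$ (the $C^1$ statement you quote is only the weaker recollection made in the proof of Proposition~\ref{prop:cs_geom}), so combined with the $C^2$-regularity of $\gamma$ near a non-degenerate central configuration the chain rule gives exactly the uniform bound on $\partial^2_a F_1$ that you need.
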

\begin{proof}
	Let us consider the first quantity in Eq. \eqref{eq:der_sec_S}, and, to fix the ideas, let us suppose that $\bx_i$ is a strict maximum for $\|\gamma(\cdot)\|$. Hence, there exists a constant $A$ such that
	\begin{equation*}
		\frac{d^2}{d\xi^2}\|\gamma(\xi)\|>A>0, \quad \forall\xi\in[\alpha_i,\beta_i]. 
	\end{equation*}
By the differentiable dependence of the solutions of the outer problem with respect to variations of the endpoints, the $C^2$-regularity of $\gamma$ and the compactness of $[\alpha_i, \beta_i]$, one can ensure that there exists a constant $C>0$ such that, for every $\xi_1,\xi_2$ in $[\alpha_i, \beta_i]$, 
\begin{equation*}
	-C\leq \partial^2_b S_E(\xi_1,\xi_2)\leq C; 
\end{equation*}
moreover, from Lemma \ref{lem:lemma_sus} one has that 
\begin{equation*}
	\partial^2_aS_I(\xi, \xi_I; h)=\sqrt{\mathcal E+h}\ \frac{d^2}{d\xi^2}\|\gamma(\xi)\| +\frac{\mu}{\sqrt{\mathcal E+h}}\ \frac{\partial^2}{\partial \xi^2}F_1(\xi,\xi_I; \mathcal E+h), 
\end{equation*}
for every $\xi\in[\alpha_i,\beta_i]$, where $F_1$ is uniformly $C^2$-bounded with respect to the first two variables. One then obtains 
\begin{equation*}
	\partial^2_bS_E(\xi_E, \xi)+\partial^2_aS_I(\xi, \xi_I; h)\geq -C+\sqrt{\mathcal E+h}\ A +\frac{\mu}{\sqrt{\mathcal E+h}}\frac{\partial^2}{\partial \xi^2}F_1(\xi,\xi_I; \mathcal E+h),  
\end{equation*}
and then the thesis follows by the uniform boundedness of $F_1$.  
\end{proof}

From the proof of the previous lemma it follows that the threshold for $h$ can be chosen uniformly also in $\xi_E$ and $\xi_I$. This fact allows to deduce our final result.

\begin{theorem}\label{thm:final}
	Let us suppose that there exist $\bx_1, \dots, \bx_m$ non-degenerate central configurations, not antipodal if $m=2$. Then, if $h$ is large enough, the projection map $\pi$ defined in Theorem \ref{thm:ex_dyn_sym_intro} is also injective. In other words, the dynamics of the refraction billiard admits a topologically chaotic subsystem. 
\end{theorem}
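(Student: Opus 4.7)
The plan is to reduce the injectivity of $\pi$ to uniqueness of the critical sequence realizing a prescribed admissible word, and to establish this uniqueness by an implicit-function argument powered by Lemma~\ref{lem:pi_iniettiva} and the expansion of $S_I$ from Lemma~\ref{lem:lemma_sus}. Once injectivity is proved, $\pi$ becomes a homeomorphism intertwining $\mathcal F|_X$ with the Bernoulli right-shift $\sigma_r$ on the symbolic space $(\mathcal L, d)$ defined in \eqref{eq:distL}; topological chaos of the shift restricted to $\mathcal L$ then transports automatically to the dynamics $(\mathcal F,X)$.

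First I would observe that the surjectivity argument of Proposition~\ref{prop:pi_suriettiva} identifies every preimage $(\xi,v)\in\pi^{-1}(\underline\ell)$ with a bi-infinite sequence of crossing parameters $\hat{\underline\xi}=(\hat\xi_k)_{k\in\Z}$ with $\hat\xi_{2j},\hat\xi_{2j+1}\in I^{(\ell_j)}$. By the variational formulation of Snell's law embodied in \eqref{eq:SnellConcatenation} and \eqref{eq: derivate S}, this sequence is exactly a zero of the formal gradient $\nabla W_\infty(\cdot;h)$, whose $k$-th component depends only on the triple $(\xi_{k-1},\xi_k,\xi_{k+1})$; the initial condition $(\xi,v)$ is recovered uniquely from $\hat{\underline\xi}$ through the concatenation procedure of Definition~\ref{def:concatenazione}. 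It therefore suffices to show that, for $h$ large, $\nabla W_\infty$ has at most one zero in $\prod_{k\in\Z} I^{(\ell_{\lfloor k/2\rfloor})}$.

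To establish this uniqueness, I would analyze the formal Hessian $\nabla^2 W_\infty$, which is a symmetric tridiagonal sequence operator. Using Lemma~\ref{lem:lemma_sus}, the diagonal entries are controlled for large $h$ by $\sqrt{\mathcal E+h}\,(\|\gamma\|)''(\xi_k)$, whose sign is constant on each $I^{(\ell_{\lfloor k/2\rfloor})}$ thanks to the non-degeneracy assumption combined with the strict convexity/concavity restriction of Lemma~\ref{lem:pi_iniettiva}. Meanwhile the mixed partials are $O(1/\sqrt h)$ from the inner arcs (since the leading term $\sqrt{\mathcal E+h}(\|\gamma(\alpha)\|+\|\gamma(\beta)\|)$ in $S_I$ is separable) and $O(1)$ from the outer arcs. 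For $h$ large enough, $\nabla^2 W_\infty$ is then strictly diagonally dominant, uniformly on the sequence space, so the scalar equation $\partial_{\xi_k} W_\infty=0$ can be inverted to give $\xi_k = G_k(\xi_{k-1},\xi_{k+1};h)$ with the sum of the Lipschitz constants in the two neighbors strictly less than $1$. A Banach fixed-point argument for the resulting map on $\prod_k I^{(\ell_{\lfloor k/2\rfloor})}$ endowed with the sup norm yields the desired uniqueness, in the spirit of the implicit-function strategy of \cite{bolotin2000periodic,Bol2017}.

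The main obstacle is the quantitative diagonal dominance: the $O(1)$ outer mixed partials $\partial_a\partial_b S_E$ must be dominated by the $\sqrt h$-growing inner diagonal entries uniformly on the rectangle, with constants independent of the word $\underline\ell$, and strictly enough to yield a global contraction constant less than $1$. This requires extracting the subleading terms of $S_I$ from Lemma~\ref{lem:lemma_sus} carefully and exploiting the $C^2$-regularity of $\gamma$ near each non-degenerate central configuration. Once this is achieved for all sufficiently large $h$ and all admissible words simultaneously, the proof of Theorem~\ref{thm:final} is complete.
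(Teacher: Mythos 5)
Your proposal is correct, and it follows the same reduction as the paper -- injectivity of $\pi$ is equivalent to uniqueness of the critical sequence of crossing parameters realizing a given word, and the key input is exactly Lemma~\ref{lem:pi_iniettiva}, i.e.\ the fact that the diagonal second derivative $\partial^2_a S_I \sim \sqrt{\mathcal E+h}\,(\|\gamma\|)''$ dominates the $O(1)$ outer contribution for large $h$. Where you diverge is in the mechanism for extracting uniqueness from this. The paper stays with the finite-dimensional Poincar\'e--Miranda setup of Proposition~\ref{prop: punto critico azione} and asserts that monotonicity of each $F_k$ in its own variable $x_k$ forces the zero of the system to be unique; you instead pass to the formal bi-infinite gradient, establish strict diagonal dominance of the tridiagonal Hessian (diagonal $\sim\sqrt h$, off-diagonal $O(1)$ from $S_E$ and $O(1/\sqrt h)$ from $S_I$), invert each scalar equation to get $\xi_k=G_k(\xi_{k-1},\xi_{k+1};h)$, and close with a Banach contraction on the sequence space -- the implicit-function route of \cite{bolotin2000periodic,Bol2017} that the paper mentions only as an aside. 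Your version is arguably the more robust of the two: componentwise monotonicity of the $F_k$ alone does not in general guarantee uniqueness for a coupled system (one genuinely needs the neighbors' influence to be subordinate, which is precisely your diagonal-dominance step), and your argument acts directly on bi-infinite words rather than on finite periodic truncations, so no additional limiting step is needed to conclude injectivity of $\pi$ on all of $X$. The price is the uniform quantitative control of the mixed partials across all words, which you correctly identify as the main technical burden and which does follow from the $C^2$-uniform boundedness of $F_1$ in Lemma~\ref{lem:lemma_sus} together with the compactness of the intervals $[\alpha_i,\beta_i]$.
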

\begin{proof}
	Let us go back to the proof of Proposition \ref{prop: punto critico azione}: to prove that $\pi$ is injective it is sufficient to prove that the critical point found through Poincar\'e-Miranda Theorem is unique. \\
	In the notation of Theorem \ref{thm: miranda}, if the functions $F_k$ are monotone, then the solution of problem \eqref{eq: miranda sistema} must be unique in $R$. This is true if, for any $k\in\{1, \ldots, d\}$, and any fixed $x_1, \ldots, x_{k-1}, x_{k+1}, \ldots, x_d$, the quantity 
	\begin{equation*}
		\frac{\partial}{\partial x_k}F_{k}(x_1, \ldots, x_k)
	\end{equation*}
has constant sign for $x_k\in[-L, L]$. \\
In our case, this is ensured by Lemma \ref{lem:pi_iniettiva}. 
\end{proof}

\section{Final remarks and conclusions}

In this work, along with the previous papers \cite{deblasiterraciniellissi,IreneSusNew}, we presented the analysis of a brand new dynamical model of interest in Celestial Mechanics, starting from the basic study of its fixed points and arriving to its non-integrability. In particular there is \emph{fil rouge} between the first and the present paper: an elliptic domain with its center in the origin satisfies the assumptions of Theorem \ref{thm:final} and thus the associated system is chaotic; this represents the analytical proof of the numerical results shown in paper \cite[Figure 11]{deblasiterraciniellissi}. 

Furthermore, the results of the present paper hold also when we deal with \emph{reflective billiards}: in this case, known in literature as \emph{Kepler billiard}, Keplerian arcs are reflected against the domain's boundary, hence just internal arcs are considered (see \cite{Bol2017}). Taking again the example of an elliptic domain, we deduce that, as far as the singularity is in the center of the ellipse, the reflective system is chaotic as well. This negatively complements the recent results by Takeuchi and Zhao \cite{Lei2021,takeuchi2021conformal, takeuchi2022projective} where they consider an elliptic Kepler billiard with the mass in one of the foci, proving its integrability. Note that an ellipse with focus at the origin does not satisfy the hypotheses of Theorem \ref{thm:ex_dyn_sym_intro}, while it does when moving the gravitational center at the center of the ellipse. The transition from integrability to chaoticity in such elliptic model is the subject of a forthcoming paper (\cite{BaDeb2022}).

We stress that refraction billiards can be used to study the motion of a particle subjected to \emph{any} discontinuous potential: although we consider a galactic model, the techniques we used can be implemented in different contexts, in particular emerging from physical models. We cite for instance \emph{inverse magnetic billiards} studied in \cite{gasiorek2019dynamics}.

\appendix
\section{Jacoby distances and Snell's law}\label{sec:appB}
This appendix is devoted to the definition and the introduction of the main results connected to the Jacobi distances related to the inner and outer dynamics, which are used in Section \ref{sec:symb_dyn}. In particular, the relation between the variational and geometric properties of the inner and outer arcs connecting points on $\partial D$ are investigated. 

\begin{definition}\label{def:Jacobi_dist}
	Let $\xi^E_1, \xi_2^E\in (a,b) \subset [0,L]$, where $(a,b)$ satisfies Definition \ref{def:prop_arco_est} and let $z_E(\cdot; \gamma\left(\xi_1^E\right),\gamma\left(\xi_2^E\right) ):[0, T_E]\to\R^2$ be the unique solution of \eqref{eq:bolzaext_intro}. The \textbf{outer Jacobi distance} between the two points is given by\footnote{Here and in the following, to ease the notation, we will omit the arguments in $T_E(\cdot, \cdot)$ and $T_I(\cdot, \cdot;\cdot)$. }
	\begin{equation*}
		d_E\left(\gamma\left(\xi_1^E\right), \gamma\left(\xi_2^E\right)\right)\eqdef\int_0^{T_E}\| z'_E(s)\|\sqrt{V_E\left(z_E(s)\right)}ds.
	\end{equation*}
	According to Definition \ref{def:prop_arco_int}, let  $\mathcal{A}$ satisfy the inner-arc property, $h>h_0$, and $\xi^I_1, \xi_2^I\in {\mathcal A}$ be such that there exists  a unique {\it (TnT)} solution of \eqref{eq: problema interno}, $z_I(\cdot; \gamma\left(\xi_1^I\right),\gamma\left(\xi_2^I\right); h ):[0, T_I]\to\R^2$. The \textbf{inner Jacobi distance} between the endpoints is then given by 
	\begin{equation*}
		d_I\left(\gamma\left(\xi_1^I\right), \gamma\left(\xi_2^I\right); h\right)\eqdef\int_0^{T_I}\| z'_I(s)\|\sqrt{V_I\left(z_I(s)\right)}ds.
	\end{equation*}
\end{definition}
The integral quantities in the above definitions can be interpreted as the \emph{Jacobi lengths} of $z_E$ (resp. $z_I$) related to the outer (resp. inner) potential (see also \cite{soave2014symbolic}). These quantities can indeed be defined in more general frameworks, although it is beyond the scope of our work. 
\begin{remarks} In view of Definition \ref{def:Jacobi_dist}, we observe that: 
	\begin{itemize}
		\item the uniqueness of the solutions of the problems \eqref{eq:bolzaext_intro} and \eqref{eq: problema interno} implies that the quantities $d_E$ and $d_I$ are well defined and differentiable. In particular, when $\xi_1^I=\xi_2^I$ and the inner arc is the collision-ejection solution, the integral representing its Jacobi length can be interpreted in its regularised formulation in the Levi-Civita plane (see for example \cite{deblasiterraciniellissi}); 
		\item the functions $d_E$ and $d_I$ are not proper distances: as a matter of fact, when $\xi_1^E=\xi_2^E$ or $\xi_1^I=\xi_2^I$ they do not vanish but represent the non-zero Jacobi length of the homothetic (outer or inner) arc; 
		\item the Jacobi length is invariant under reparametrizations of the path $z_{E/I}$.  
	\end{itemize}
\end{remarks}
It is useful to express the distances as  functions of the curve's parameter rather than of the endpoints in the plane. Given then  $\xi^E_1, \xi_2^E\in[0,L]$ and $h>0$, $\xi^I_1, \xi_2^I\in[0,L]$ as in Definition \ref{def:Jacobi_dist}, we can define the functions

\begin{equation}\label{eq:defS}
	S_E(\xi_1^E, \xi_2^E)\eqdef d_E(\gamma(\xi_1^E), \gamma(\xi_2^E)), \quad S_I(\xi_1^I, \xi_2^I; h)=d_I(\gamma(\xi_1^I), \gamma(\xi_2^I); h). 
\end{equation} 
We observe that this definition is coherent with the one of \emph{generating function} in classical Birkhoff billiards (see \cite{Tabbook}).\\
Let us recall that the distances $d_E$ and $d_I$ are infinitely-many differentiable as functions of the endpoints in every set in which they are well defined; by the chain rule, this implies that the lengths $S_E$ and $S_I$ inherit the regularity of the curve $\gamma$: in our case, since this curve is supposed to be at least of class $C^1([0,L])$, provided that the energy jump $h$ is large enough the two Jacobi lengths have the same regularity in every subset of $[0,L]\times[0,L]$ in which the inner or  outer dynamics are well defined. \\
Denoting with $\partial_a$ and $\partial_b$ the partial derivatives respectively with respect to the first and second variable, one has  
\begin{equation}\label{eq: partial S}
	\begin{aligned}
		\partial_a S_{E}\left(\xi_1^{E}, \xi_2^{E}\right)=\nabla_{P_1}d_{E}\left(\gamma\left(\xi_1^{E}		\right),\gamma\left(\xi_2^{E}\right)\right)\cdot \dot\gamma\left(\xi_1^{E}\right)\\
		\partial_b S_{E}\left(\xi_1^{E}, \xi_2^{E}\right)=\nabla_{P_2}d_{E}\left(\gamma\left(\xi_1^{E}		\right),\gamma\left(\xi_2^{E}\right)\right)\cdot \dot\gamma\left(\xi_2^{E}\right) 
	\end{aligned}
\end{equation}
(and similarly for $S_I$), where $\nabla_{P_1}$ and $\nabla_{P_2}$ denote respectively the gradient with respect to the first and the second point.
In order to compute this quantities, it is worth a more general digression on the relation between solutions of suitable fixed-ends problem and the critical points of the corresponding Jacobi length. More precisely, let us fix $P_1,P_2\in\R^2$  and consider the Bolza problem 
\begin{equation}\label{eq:BolzaAppB}
	\begin{cases}
		{z''}(s)=\nabla V\left(z(s)\right), \quad &s\in[0, T]\\
		\frac{1}{2}\| z'(s)\|^2-V\left(z(s)\right)=0 &s\in[0, T]\\
		z(0)=P_1, \quad z(T)=P_2
	\end{cases}
\end{equation}  
for some $T>0$, where $V$ is a $C^2$ potential defined on an open set $\Omega\subseteq\R^2$. The corresponding Jacobi length is given by 
\begin{equation}\label{eq:Jacobilength}
	L(z)\eqdef\int_{0}^T\|z'(s)\|\sqrt{V(z(s))}ds, 
\end{equation}
and is defined on the set of the path connecting $P_1$ to $P_2$ and belonging to  the Hill's region associated to $V$ 
\begin{equation*}
	H_{P_1P_2}\eqdef\left\{z\in H^1\left([0,T], \R^2\right)\text{ }|\text{ }z(0)=P_1, z(T)=P_2, V(z(s))\geq0\text{ for all }s\in[0, T]\right\}. 
\end{equation*}
Let us start by stating some classical results coming from Critical Points theory. 
\begin{lemma}\label{lem: appBEL}
	A path $z\in H_{P_1P_2}$ is a critical point of $L(\cdot)$ if and only if it is a solution of the Euler-Lagrange equations 
	\begin{equation}\label{eq: EL}
		\frac{d}{ds}\left(\sqrt{V(z(s))}\frac{ z'(s)}{\|z'(s)\|}\right)-\frac{\| z'(s)\|}{2\sqrt{V(z(s))}}\nabla V(z(s))=0 \quad \text{a.e. in }[0,T]. 
	\end{equation}
\end{lemma}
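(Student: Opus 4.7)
The statement is the standard Euler--Lagrange characterization for the length-type functional $L(z)=\int_0^T \|z'(s)\|\sqrt{V(z(s))}\,ds$, whose Lagrangian is $\mathcal{L}(z,p)\eqdef \|p\|\sqrt{V(z)}$. My plan is to compute the first variation of $L$ explicitly, apply integration by parts, and then invoke the fundamental lemma of the calculus of variations; the converse follows by retracing the same computation.

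First, I would fix a test function $\varphi\in C^\infty_c((0,T),\R^2)$ (eventually extended to $H^1_0$ by density) and compute, for $z\in H_{P_1P_2}$ with $\|z'\|>0$ and $V(z)>0$ a.e., the Gateaux derivative
\[
\delta L(z)[\varphi]=\lim_{\epsilon\to 0}\frac{L(z+\epsilon\varphi)-L(z)}{\epsilon}
=\int_0^T\!\!\left(\sqrt{V(z)}\,\frac{z'\cdot\varphi'}{\|z'\|}+\frac{\|z'\|}{2\sqrt{V(z)}}\,\nabla V(z)\cdot\varphi\right)ds,
\]
using the elementary identities $\partial_p\|p\|=p/\|p\|$ and $\partial_z\sqrt{V(z)}=\nabla V(z)/(2\sqrt{V(z)})$, together with the dominated convergence theorem to pass the limit inside the integral. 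Integration by parts on the first term, justified by $\varphi(0)=\varphi(T)=0$, gives
\[
\delta L(z)[\varphi]=\int_0^T\!\!\left[-\frac{d}{ds}\!\left(\sqrt{V(z)}\frac{z'}{\|z'\|}\right)+\frac{\|z'\|}{2\sqrt{V(z)}}\nabla V(z)\right]\cdot\varphi\,ds,
\]
and then the fundamental lemma of the calculus of variations, applied componentwise in $\R^2$, yields \eqref{eq: EL} a.e.\ in $[0,T]$ whenever $z$ is a critical point. The converse direction is a direct verification: if $z$ satisfies \eqref{eq: EL}, I reverse the integration by parts to obtain $\delta L(z)[\varphi]=0$ for every admissible $\varphi$, so that $z$ is indeed a critical point of $L$ on $H_{P_1P_2}$.

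The main technical obstacle is the regularity of the Lagrangian: $\mathcal{L}(z,p)=\|p\|\sqrt{V(z)}$ is not smooth at $p=0$ nor at the Hill boundary $\{V=0\}$, so the differentiation under the integral sign is only licit on the open set where $\|z'\|>0$ and $V(z)>0$. In our setting this is harmless because the physically meaningful critical points of $L$ arise from solutions of \eqref{eq:BolzaAppB}, whose energy conservation $\tfrac12\|z'\|^2=V(z)$ makes $\|z'\|$ vanish exactly when $V(z)$ vanishes, and both quantities are nonzero on the open arcs away from the Hill boundary; the collisional case (where $V\to+\infty$ and $\|z'\|\to+\infty$) is handled as usual by Levi-Civita regularization, as already recalled in the paper. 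A final remark I would add is that, thanks to the reparametrization invariance of $L$, the Euler--Lagrange equation \eqref{eq: EL} is in fact equivalent to the Newton equation $z''=\nabla V(z)$ once the arc-length (or energy) parametrization is chosen, which is precisely the form of \eqref{eq:BolzaAppB} and confirms the consistency of the variational and dynamical formulations.
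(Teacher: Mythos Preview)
Your proposal is correct and follows essentially the same approach as the paper: compute the first variation of $L$ against test functions in $H^1_0$, integrate by parts using the vanishing boundary values, and apply the fundamental lemma of the calculus of variations. The paper's proof is terser---it omits the density argument, the dominated-convergence justification, and the explicit converse direction---but the core computation and logic are identical; your additional remarks on regularity and the link to the Newton equation are accurate but go beyond what the paper includes.
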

\begin{proof}
	One has that $z$ is a critical point for the Jacobi length if and only if for every $v\in H_0^1([0,T])$ one has that $dL(z)[v]=0$. This is equivalent to require that the following chain of equalities holds:  
	\begin{equation*}
		\begin{aligned}
			0&=\frac{d}{d\epsilon}L(z+\epsilon v)_{|_{\epsilon=0}}=\left(\frac{d}{d\epsilon}\int_{0}^T\|z'(s)+\epsilon v'(s)\|\sqrt{V(z(s)+\epsilon v(s))}\,ds\right)_{|_{\epsilon=0}}\\
			&= \int_0^T\frac{\sqrt{V(z(s))}}{\| z'(s)\|} z'(s)\cdot  v'(s)+\frac{\| z'(s)\|}{2\sqrt{V(z(s))}}\nabla V(z(s))\cdot v(s)\, ds\\
			&=\int_0^T \left[-\frac{d}{ds}\left(\sqrt{V(z(s))}\frac{ z'(s)}{\| z'(s)\|}\right)+\frac{\| z'(s)\|}{2\sqrt{V(z(s))}}\nabla V(z(s))\right]\cdot v(s)\, ds, 
		\end{aligned}
	\end{equation*}
	where in the last equation an integration by parts has been employed. As the identity must be true for every $v\in H_0^1([0,T])$, one has that the Euler-Lagrange equations \eqref{eq: EL} must hold for almost every $s\in[0,T]$.  
\end{proof}
We stress that if $u\in C^1([0,T])\cap H_{P_1P_2}$ is such that $V(z(s))>0$ and $\| z'(s)\|>0$ for every $s\in[0, T]$, by continuity one can infer that Eq. \eqref{eq: EL} holds everywhere in $[0,T]$. 
Making use of the previous result, it is possible to find a connection between solutions of problem \eqref{eq:BolzaAppB} and critical points of $L(\cdot)$.  
\begin{lemma}\label{lem:appB}
	Let $\bar z\in H_{P_1P_2}$ be a solution of problem \eqref{eq:BolzaAppB} such that $V(\bar z(s))>0$ for every $s\in [0,T]$. Then $\bar z$ is also a critical point of the Jacobi length $L$. 
\end{lemma}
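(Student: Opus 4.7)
The plan is to verify directly that $\bar z$ solves the Euler--Lagrange equation \eqref{eq: EL} by combining the Newton equation $\bar z'' = \nabla V(\bar z)$ with the conservation of energy $\tfrac12\|\bar z'\|^2 = V(\bar z)$, and then invoke Lemma \ref{lem: appBEL}. The hypothesis $V(\bar z(s))>0$ on $[0,T]$ is precisely what makes all the quantities appearing in \eqref{eq: EL} well-defined: it forces $\|\bar z'(s)\| = \sqrt{2V(\bar z(s))} > 0$ uniformly, so no division by zero occurs and $\bar z$ is automatically $C^1$ with non-vanishing speed.

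The core computation is a one-line substitution. From the energy integral one has
\begin{equation*}
\sqrt{V(\bar z(s))} = \frac{\|\bar z'(s)\|}{\sqrt{2}},
\qquad
\frac{\|\bar z'(s)\|}{2\sqrt{V(\bar z(s))}} = \frac{1}{\sqrt{2}},
\end{equation*}
so the first factor appearing inside the time derivative in \eqref{eq: EL} simplifies as
\begin{equation*}
\sqrt{V(\bar z(s))}\,\frac{\bar z'(s)}{\|\bar z'(s)\|} = \frac{\bar z'(s)}{\sqrt{2}}.
\end{equation*}
Differentiating and using the Newton equation $\bar z''(s) = \nabla V(\bar z(s))$ yields
\begin{equation*}
\frac{d}{ds}\!\left(\sqrt{V(\bar z(s))}\,\frac{\bar z'(s)}{\|\bar z'(s)\|}\right) = \frac{\bar z''(s)}{\sqrt{2}} = \frac{\nabla V(\bar z(s))}{\sqrt{2}} = \frac{\|\bar z'(s)\|}{2\sqrt{V(\bar z(s))}}\,\nabla V(\bar z(s)),
\end{equation*}
which is exactly \eqref{eq: EL}.

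Finally, since $\bar z \in H_{P_1P_2}$ and the Euler--Lagrange equation holds everywhere on $[0,T]$ (hence a.e.), Lemma \ref{lem: appBEL} applies and $\bar z$ is a critical point of $L$. There is essentially no obstacle here; the only delicate point to flag is that the hypothesis $V(\bar z)>0$ guarantees the absence of collision/Hill-boundary points along the trajectory, so the formal manipulations above are rigorously justified on the whole interval $[0,T]$ and no regularisation (e.g.\ of Levi-Civita type) is needed.
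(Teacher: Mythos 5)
Your proof is correct and follows essentially the same route as the paper: verify the Euler--Lagrange equation \eqref{eq: EL} pointwise using the Newton equation and the zero-energy relation, then invoke Lemma \ref{lem: appBEL}. The only difference is cosmetic --- you substitute $\sqrt{V(\bar z)}=\|\bar z'\|/\sqrt{2}$ before differentiating, which collapses the product-rule expansion the paper carries out explicitly, and you correctly flag that $V(\bar z)>0$ together with the energy integral gives the non-vanishing speed needed for the manipulations.
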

\begin{proof}
	Let us start by observing that, if $\bar z$ is a solution of problem \eqref{eq:BolzaAppB} such that $V(\bar z(s))>0$ in $[0,T]$, then $\bar z\in C^2([0, T])\cap H_{P_1P_2}([0, T])$ and $\|{\bar z'}(s)\|>0 $ for every $s\in[0, T]$. We will then prove that $\bar z$ is a critical point for $L(\cdot)$ by verifying that it solves the Euler-Lagrange equations \eqref{eq: EL} for every $s\in[0,T]$. As a matter of fact, one has, for every $s\in[0, T]$,  
	\begin{multline*}
		\frac{d}{ds}\left(\sqrt{V(\bar z(s))}\frac{ {\bar z'}(s)}{\| {\bar z'}(s)\|}\right) \\
		= \frac{\nabla V(\bar z(s))\cdot {\bar z'}(s)}{2\sqrt{V(\bar z(s))}}\frac{{\bar z'}(s)}{\|{\bar z'}(s)\|}+\frac{\sqrt{V(\bar z(s))}}{\|{\bar z'}(s)\|}{\bar z''}(s)-\sqrt{V(\bar z(s))}\frac{{\bar z'}(s)\cdot {\bar z''}(s)}{\|{\bar z'}(s)\|^3}{\bar z'}(s), 
	\end{multline*}
	and the Euler-Lagrange equations follow from the first two lines in Eq. \eqref{eq:BolzaAppB}. The conclusion follows from Lemma \ref{lem: appBEL}.
\end{proof}
Passing now from a generic potential $V$ to the inner and outer potentials of our dynamical system, let us remark that, when $V=V_E$ or $V=V_I$ and $P_1\neq P_2$ are as in Definition \ref{def:Jacobi_dist}, the unique inner or outer solution is in the form described in Lemma \ref{lem:appB}. Starting from this observation, we can now obtain the explicit expressions of the quantities involved in Eq. \eqref{eq: partial S}. In the following, the Jacobi lengths related respectively to the outer and inner potentials will be denoted by $L_E$ and $L_I$. 
\begin{lemma}\label{lem:derivateS}
	Let $\xi_1^E,\xi_2^E,\xi_1^I, \xi_2^I\in [0, L]$ and $h>0$ as in Definition \ref{def:Jacobi_dist}. Then 
	\begin{equation}\label{eq: derivate S}
		\begin{aligned}
			&\partial_aS_{E}(\xi_1^E, \xi_2^E)=-\sqrt{V_{E}(\gamma(\xi_1^E))}\frac{z'_{E}(0; \gamma(\xi_1^E), \gamma(\xi_2^E))}{\|z'_{E}(0; \gamma(\xi_1^E), \gamma(\xi_2^E))\|}\cdot\dot\gamma(\xi_1^E)\\
			&\partial_bS_{E}(\xi_1^E, \xi_2^E)=\sqrt{V_{E}(\gamma(\xi_2^E))}\frac{z'_{E}(T_E; \gamma(\xi_1^E), \gamma(\xi_2^E))}{\|z'_{E}(T_E; \gamma(\xi_1^E), \gamma(\xi_2^E))\|}\cdot\dot\gamma(\xi_2^E)\\
			&\partial_aS_{I}(\xi_1^I, \xi_2^I; h)=-\sqrt{V_{I}(\gamma(\xi_1^I))}\frac{z'_{I}(0; \gamma(\xi_1^I), \gamma(\xi_2^I); h)}{\|z'_{I}(0; \gamma(\xi_1^I), \gamma(\xi_2^I); h)\|}\cdot\dot\gamma(\xi_1^I)\\
			&\partial_bS_{I}(\xi_1^I, \xi_2^I; h)=\sqrt{V_{I}(\gamma(\xi_2^I))}\frac{z'_{I}(T_I; \gamma(\xi_1^I), \gamma(\xi_2^I); h)}{\|z'_{I}(T_I; \gamma(\xi_1^I), \gamma(\xi_2^I), h)\|}\cdot\dot\gamma(\xi_2^I). 
		\end{aligned}	
	\end{equation}
\end{lemma}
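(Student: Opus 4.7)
The plan is to express $S_E(\xi_1^E,\xi_2^E)$ and $S_I(\xi_1^I,\xi_2^I;h)$ as the value of the corresponding Jacobi length functional at a critical point of $L_E$ (resp. $L_I$) with fixed endpoints, and to compute the derivatives by a first-variation argument in which the test field does \emph{not} vanish at the boundary. By the chain rule displayed in Eq. \eqref{eq: partial S}, it is enough to establish the gradient formulas
\[
\nabla_{P_2}d_E(P_1,P_2) = \sqrt{V_E(P_2)}\,\frac{z'_E(T_E)}{\|z'_E(T_E)\|}, \qquad \nabla_{P_1}d_E(P_1,P_2) = -\sqrt{V_E(P_1)}\,\frac{z'_E(0)}{\|z'_E(0)\|},
\]
and their inner analogues; the four identities in \eqref{eq: derivate S} then follow by substituting $P_i = \gamma(\xi_i)$ and projecting onto $\dot\gamma(\xi_i)$.

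To obtain these gradient formulas I fix a perturbation direction $w \in \R^2$ at, say, $P_2$ and use the smooth dependence of the outer arc on its endpoints (guaranteed by the outer-arc property together with standard ODE theory, since $V_E>0$ on outer arcs makes $\|z'\|$ uniformly bounded away from zero) to build, on the fixed parameter interval $[0,1]$, a smooth one-parameter family $\{z_\varepsilon\}_\varepsilon$ of paths with $z_\varepsilon(0)=P_1$, $z_\varepsilon(1)=P_2+\varepsilon w$, and $z_0$ equal to the reparametrized unique outer arc from $P_1$ to $P_2$. Reparametrization invariance of the Jacobi length then gives $d_E(P_1,P_2+\varepsilon w) = L_E(z_\varepsilon)$.

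Differentiating at $\varepsilon=0$ exactly as in the proof of Lemma \ref{lem: appBEL}, but with a variation field $\eta \eqdef \partial_\varepsilon z_\varepsilon|_{\varepsilon=0}$ satisfying $\eta(0)=0$ and $\eta(1)=w$, integration by parts produces the Euler–Lagrange integrand paired with $\eta$ plus the boundary term $\bigl[\sqrt{V_E(z_0)}\,z_0'/\|z_0'\|\cdot\eta\bigr]_0^1$. Since $z_0$ is a critical point of $L_E$ with fixed endpoints, Lemmas \ref{lem: appBEL} and \ref{lem:appB} force the interior integral to vanish, and only the boundary contribution at $s=1$ survives, producing exactly $\sqrt{V_E(P_2)}\,z_E'(T_E)/\|z_E'(T_E)\|\cdot w$ once one observes that the unit tangent $z'/\|z'\|$ is invariant under reparametrization. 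Choosing the variation supported instead at $s=0$ (so $\eta(1)=0$, $\eta(0)=w$) yields the formula for $\nabla_{P_1}d_E$, with the minus sign coming from the lower limit of the boundary term.

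The inner identities are proved by the same argument with $V_I$ in place of $V_E$ and the unique \tnt inner arc from Definition \ref{def:prop_arco_int} in place of the outer arc; smoothness in the endpoints is again granted by the inner-arc property and ODE theory on the non-collisional part of the arc (where $V_I$ is bounded and positive). The main obstacle to watch for is the smoothness of the perturbing family $\{z_\varepsilon\}$ jointly in $s$ and $\varepsilon$, which is a standard implicit function argument but must be carried out in the Levi-Civita regularized variable as in \cite{deblasiterraciniellissi} if one wishes to include the collisional case $\xi_1^I=\xi_2^I$; once this is granted, the endpoint velocities $z_I'(0)$ and $z_I'(T_I)$ at the non-collisional boundary points on $\partial D$ remain well defined and the boundary-term computation goes through verbatim.
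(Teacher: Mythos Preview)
Your argument is correct and follows the same first-variation strategy as the paper: vary the endpoint, integrate by parts, use the Euler--Lagrange equations (Lemmas \ref{lem: appBEL} and \ref{lem:appB}) to kill the interior integral, and read off the boundary term. The paper's execution differs only in that it first reparametrizes the arc so that $\|\dot{\tilde z}\|\sqrt{V_E(\tilde z)}\equiv L_E$ and then differentiates the \emph{squared} Lagrangian $\mathcal L=\|\dot z\|^2 V_E(z)=L_E^2$, which slightly streamlines the algebra; your direct computation with the Jacobi length itself is the standard first-variation-of-arclength route and is equally valid.

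One small caveat: your parenthetical ``$V_E>0$ on outer arcs makes $\|z'\|$ uniformly bounded away from zero'' is not quite right in the homothetic case $\xi_1^E=\xi_2^E$, since that arc touches the Hill boundary where $V_E=0$ and $z'=0$. The paper handles this exactly as you would---by proving the formula first for $\xi_1^E\neq\xi_2^E$ and then extending by differentiable dependence on the endpoints---so this does not affect the validity of your argument.
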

\begin{proof}
	Let us observe that the partial derivatives in Eq.  \eqref{eq: partial S} can be expressed as directional derivatives of the (inner or outer) Jacobi length in the direction of $\dot{\gamma}(\xi)$ for suitable $\xi\in[0, L]$. Taking for example $\partial_a S_E(\xi_1^E,\xi_2^E)$ (analogous expressions hold for the other derivatives listed in Eq. \eqref{eq: derivate S}), one has 
	\begin{equation*}
		\partial_aS_E(\xi_1^E, \xi_2^E)=\partial_{1,\dot{\gamma}(\xi_1^E)}d_E\left(\gamma\left(\xi_1^E\right), \gamma\left(\xi_2^E\right)\right)=\partial_{1,\dot{\gamma}(\xi_1^E)}L_E\left(z_E\left(\cdot; \gamma{\left(\xi_1^E\right)}, \gamma{\left(\xi_2^E\right)}\right)\right), 
	\end{equation*}
	where, in general, $\partial_{1, \sigma}$ denotes the directional derivative with respect to variations of the first endpoint in the direction of the unit vector $\sigma$. \\
	To find the explicit expression of $\partial_{a}S_{E}$, let us start by assuming that $\xi_1^E\neq\xi_2^E$. It is straightforward to verify that $z_E(\cdot)\eqdef z_E\left(\cdot; \gamma{\left(\xi_1^E\right)}, \gamma{\left(\xi_2^E\right)}\right)$ is a classical solution of the associated Bolza problem such that $V_E\left(z_E(s)\right)>0$ for every $s\in[0, T_E]$: by Lemma \ref{lem:appB}, the outer arc is a solution of the associated Euler-Lagrange equations for every $s\in[0, T_E]$. {Let us now consider the new parametrization $s\mapsto t(s)$ given by 
		\begin{equation*}
			\begin{cases}
				\displaystyle \frac{dt}{ds}=\frac{\sqrt{2}V_E(z_E(s))}{L_E}\\
				t(0)=0
			\end{cases}
		\end{equation*}
		where $L\eqdef L_E(z_E)\in\R$ as defined in \eqref{eq:Jacobilength} with $V=V_E$, and define $\tilde z(t)\eqdef z_E\left(s(t)\right)$. Defining the new time derivative as  $^\cdot\eqdef\frac{d}{dt}$, it is straighforward to verify that $t\in[0,1]$ and that 
		\begin{equation*}
			\forall t\in[0,1]\quad \|\dot{\tilde z}(t)\|\sqrt{V_E\left(\tilde z(t)\right)}=L_E. 
		\end{equation*}
		Moreover, by the invariance of the Jacobi length under reparametrizations, one has that 
		\begin{equation*}
			S_E\left(\xi_1^E, \xi_2^E\right)=L_E(z_E)=L_E\left(\tilde z\right), 
		\end{equation*}
		and then, by \eqref{eq: partial S}, $\partial_aS_E(\xi_1^E, \xi_2^E)=\partial_{1,\dot{\gamma}(\xi_1^E)}L_E\left(\tilde z\right)$.  
		Starting by \eqref{eq: EL}, one can prove that the reparametrized curve $\tilde z$ satisfies the Euler-Lagrange equations
		\begin{equation*}
			\frac{d}{dt}\left(\frac{\partial \mathcal L}{\partial \dot z}\right)=\frac{\partial \mathcal L}{\partial z}, 
		\end{equation*}
		where $\mathcal L\eqdef \|\dot z(t)\|^2V_E(z(t))=L_E^2$, namely,
		\begin{equation}\label{eq:ELnuovo}
			\frac{d}{dt}\left(2V_E\left(\tilde z(t)\right)\dot{\tilde z}(t)\right)=\|\dot{\tilde z}(t)\|^2\nabla V_E\left(\tilde z(t)\right). 
		\end{equation}
		Let us now compute $\partial_{1,\dot{\gamma}(\xi_1^E)}L_E\left(\tilde z\right)$: differentiating $\mathcal L$ with respect to the first endpoint, one has that 
		\begin{equation}\label{eq:appBfin2}
			\begin{aligned}
				2 L_E \partial_{1,\dot{\gamma}(\xi_1^E)}L_E=\partial_{1,\dot{\gamma}(\xi_1^E)}\mathcal L&=\partial_{1,\dot{\gamma}(\xi_1^E)}\int_0^1\|\dot{\tilde z}(t)\|^2\sqrt{V_E\left(\tilde z(t)\right)}dt=\int_0^1\partial_{1,\dot{\gamma}(\xi_1^E)}\left(\|\dot{\tilde z}(t)\|^2\sqrt{V_E\left(\tilde z(t)\right)}\right)dt\\
				&=\int_0^1 2V_E\left(\tilde z(t)\right)\dot{\tilde z}(t)\cdot\partial_{1,\dot{\gamma}(\xi_1^E)}\dot{\tilde z}(t)+\|\dot{\tilde z}(t)\|^2\nabla V_E\left(\tilde z(t)\right)\cdot\partial_{1,\dot{\gamma}(\xi_1^E)}{\tilde z}(t).
			\end{aligned}
		\end{equation}
		Moreover, multiplying \eqref{eq:ELnuovo} by $\partial_{1,\dot{\gamma}(\xi_1^E)}{\tilde z}(t)$ and integrating the result in $[0,1]$, one obtains 
		\begin{equation}\label{eq:appBfin}
			\begin{aligned}
				&\int_0^1\frac{d}{dt}\left(2V_E\left(\tilde z(t)\right)\dot{\tilde z}(t)\right)\cdot \partial_{1,\dot{\gamma}(\xi_1^E)}{\tilde z}(t) =\int_0^1\|\dot{\tilde z}(t)\|^2\nabla V_E\left(\tilde z(t)\right) \partial_{1,\dot{\gamma}(\xi_1^E)}{\tilde z}(t)\\
				&\Longrightarrow \int_0^1 2V_E\left(\tilde z(t)\right)\dot{\tilde z}(t)\cdot\partial_{1,\dot{\gamma}(\xi_1^E)}\dot{\tilde z}(t)+\|\dot{\tilde z}(t)\|^2\nabla V_E\left(\tilde z(t)\right)\cdot\partial_{1,\dot{\gamma}(\xi_1^E)}{\tilde z}(t)=-2V_E\left(\tilde z(0)\right)\dot{\tilde z}(0)\cdot \dot\gamma\left(\xi_1^E\right), 
			\end{aligned}
		\end{equation}
		where the second equation is obtained by integrating by part and observing that $\partial_{1,\dot{\gamma}(\xi_1^E)}{\tilde z}(0)=\dot{\gamma}(\xi_1^E)$ and $\partial_{1,\dot{\gamma}(\xi_1^E)}{\tilde z}(1)=0$. 
		Comparing now \eqref{eq:appBfin} and \eqref{eq:appBfin2} and recalling the expression of $L_E$, one gets the final expression
		\begin{equation*}
			\partial_{1,\dot{\gamma}(\xi_1^E)} L_E\left(\tilde z\right)=-\frac{V_E\left(\tilde z(0)\right)\dot{\tilde z}(0)}{L_E}\cdot \dot{\gamma}\left(\xi_1^E\right)=-\sqrt{V_E\left(\tilde z(0)\right)}\frac{\dot {\tilde z}(0)}{\|\dot {\tilde z}(0)\|}\cdot\dot{\gamma}\left(\xi_1^E\right). 
		\end{equation*}
		Returning now to the time parameter $s$, one obtains
		\begin{equation*}
			\partial_aS_{E}(\xi_1^E, \xi_2^E)=-\sqrt{V_{E}(\gamma(\xi_1^E))}\frac{z'_{E}(0; \gamma(\xi_1^E), \gamma(\xi_2^E))}{\|z'_{E}(0; \gamma(\xi_1^E), \gamma(\xi_2^E))\|}\cdot\dot\gamma(\xi_1^E). 
		\end{equation*}
	}

	The same identity can be extended to the case $\xi_1^E=\xi_2^E$ by observing that $V_E\left(z_E(s; \gamma(\xi_1^E), \gamma(\xi_1^E))\right)>0$ almost everywhere in $[0, T_E]$ and by taking into account the differentiable dependence of $z_E\left(\cdot; \gamma(\xi_1^E), \gamma(\xi_2^E)\right)$ with respect to variations of the endpoints. \\
	In the inner case one can use the same reasonings, keeping in mind that, whenever a collision occurs, one can consider the corresponding regularized system. 
\end{proof}

We are now ready to validate the refraction Snell's law stated in Eq. \eqref{eq:SnellIntro} to rule the junction between outer and inner arcs and given by 
\begin{equation}\label{eq: snell seni appendice}
	\sqrt{V_E(\gamma(\xi))}\sin\alpha_E=\sqrt{V_I(\gamma(\xi))}\sin\alpha_I, 
\end{equation} 
where:
\begin{itemize}
	\item $\gamma(\xi)$ is the transition point between the outer and inner region (or viceversa); 
	\item $\alpha_E$, $\alpha_I$ are the angles of the two arcs with respect to the outward-pointing normal unit vector to $\gamma$ in $\xi.$
\end{itemize} 
As already pointed out in the Introduction, Eq. \eqref{eq: snell seni appendice} is justified by a variational argument, which can be explicited by means of the partial derivatives of $S_E$ and $S_I$. \\
Let us start by fixing $h>h_0$, where $h_0$ is defined in Definition \ref{def:prop_arco_int};  now take $\xi_E, \xi_I\in[0, L]$ such that there exists a concatenation outer-inner arc starting from $\gamma(\xi_E)$ and arriving in $\gamma(\xi_I)$. More precisely, this means that there exists $\xi\in[0, L]$ such that there exists a unique outer arc between $\gamma(\xi_E)$ and $\gamma(\xi)$ and an unique {\it(TnT)} inner arc from $\xi$ to $\xi_I$.  \\
Although this reasoning can be treated by referring to a more general setting, let us suppose to have a disjoint union of intervals $\mathcal A=\bigcup_{i=1, \dots, m}(a_i, b_i)\subset[0,L]$ satisfying the inner-arc property (Definition \ref{def:prop_arco_int}) and such that every connected component $(a_i, b_i)$, $i=1, \ldots, m$, enjoys the outer-arc property (Definition \ref{def:prop_arco_est}); suppose now that $\xi_E\in(a_i, b_i)$ and $\xi_I\in(a_j,b_j)$, $j\in NA(i)$, where $NA(i)$ has been defined in Eq. \eqref{eq:def_not_antipodal}. 
We have then that for every $\xi\in(a_i,b_i)$ there are exactly one outer arc connecting $\gamma(\xi_E)$ to $\gamma(\xi)$ and one {\it (TnT)} inner arc starting from $\gamma(\xi)$ and 	arriving in $\gamma(\xi_I)$. We can then consider the total Jacobi length of the concatenation composed by these arcs, denoted, following the notation introduced in  Eq. \eqref{eq:defS}, by $S_E(\xi_E, \xi)+S_I(\xi, \xi_I; h)$. 
\begin{definition}
	We say that the concatenation between $\gamma(\xi_E)$ and $\gamma(\xi_I)$ of transition point $\gamma(\bx)$, with $\bx\in[0, L]$, satisfies the Snell's law if $\bx$ is a critical point\footnote{This criticality argument can be replaced in a minimality argument if we consider a more general definition for the inner and outer Jacobi distances, whose variables can be points not necessarily lying on $\partial D$, and restrict our analysis to a strongly convex neighborhood for both the inner and outer Jacobi metric (see \cite{do2016differential}). In such case, indeed, the geodesic arcs connecting any two points are minimizers for the Jacobi length. } for the total Jacobi length, namely, if 
	\begin{equation}\label{eq:SnellApp}
		\frac{d}{d\xi}\left(S_E(\xi_E, \xi)+S_I(\xi, \xi_I; h)\right)_{|_{\xi=\bx}}=0. 
	\end{equation} 
\end{definition}
An analogous condition can be established for a concatenation starting with an inner arc. 

Note that condition \eqref{eq:SnellApp} is equivalent to require that 
\begin{equation}\label{eq:Snell1}
	\partial_bS_E(\xi_E, \bx)+\partial_aS_I(\bx, \xi_I; h)=0, 
\end{equation}
which, in view of Lemma \ref{lem:derivateS}, can be rephrased as 
\begin{equation}\label{eq:Snell2}
	\sqrt{V_E(\gamma(\bx))}\frac{ z'_E(T_E; \gamma(\xi_E), \gamma(\bx))}{\| z'_E(T_E; \gamma(\xi_E), \gamma(\bx))\|}\cdot\dot\gamma(\bx)=\sqrt{V_I(\gamma(\bx))}\frac{ z'_I(0; \gamma(\bx), \gamma(\xi_I); h)}{\| z'_I(0; \gamma(\bx), \gamma(\xi_I); h)\|}\cdot\dot\gamma(\bx). 
\end{equation}
Eq. \eqref{eq:Snell1} is used as an admissibility criterion in Section \ref{sec:symb_dyn} to establish if a concatenation is a trajectory for the complete dynamics. As for Eq. \eqref{eq:Snell2}, it can be easily translated into Eq. \eqref{eq: snell seni appendice}, and can be interpreted as a conservation law for the tangential component of the trajectory's velocity vector across the interface. 

Let us conclude by observing that Eq. \eqref{eq: snell seni appendice} has an evident correlation with the classical Snell's law for straight light rays, which can be derived again from a variational minimization problem (known as \emph{Fermat's principle}). In this sense, our refraction law can be interpreted as a generalization for generic potentials and curved geodesics of this classical Snell's law; in particular, while in the classical case the geodesic arcs are always minimizers for the Jacobi length, in our case the solutions of the Bolza problems are only critical points of the corresponding lengths $L_E(\cdot)$ and $L_I(\cdot)$: this justifies the use of a criticality condition rather than a minimality one (which, in any case, can be retrieved working locally around the transition point).

\section{Existence and properties of Keplerian hyperbol\ae ~connecting two points}\label{sec: appA}

In this appendix we analyze some properties of Keplerian hyperbol\ae\, connecting two not antipodal points on the boundary $\partial D$. The next result is a particular case of some results provided in paper \cite{bolotin2000periodic} (see Lemma 4.1 and Proposition 6.1) where the authors consider much more general singular dynamical systems. The same authors indeed remarked that the classical Keplerian problem is a special case of their construction that can be computed explicitly.

\begin{lemma}\label{lem:lemma_sus}
	Let $p_0,p_1\in\partial D$ such that they are not antipodal, and fix $M>0$. Then for every $E>0$ there exists a unique \tnt\ (possibly regularized) solution of 
	\begin{equation*}\label{eq: problema interno_app}
		\begin{cases}
			\displaystyle	z''(s)=-M\frac{z(s)}{\|z(s)\|^3},\quad &s\in[0,T],\\
			\displaystyle	\frac{1}{2}\|z'(s)\|^2- E-\frac{M}{\|z(s)\|}=0, \quad &s\in[0,T],\\
			z(0)=p_0, \text{ }z(T)=p_1,
		\end{cases}
	\end{equation*}
	called $z_I(\cdot; p_0,p_1; E)$. Moreover, 
	\begin{equation*}
		L_I(z_I)=\sqrt{E}\left(\|p_0\|+\|p_1\|\right) +\frac{M}{\sqrt{E}}\left(F(p_0,p_1; E)-\log\left(\frac{M}{2E}\right)\right), 
	\end{equation*}
	where $F$ is $C^2$-bounded uniformly with respect to $p_0,p_1$ as $E\to\infty$. 
\end{lemma}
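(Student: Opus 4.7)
The plan is to exploit the complete integrability of the Kepler problem, reducing the statement to the classical analysis of focal conics. Since $E>0$, every orbit of mechanical energy $E$ is a branch of a Keplerian hyperbola with focus at the origin, parametrized in polar form by $r(\theta)=L^2/\bigl[M(1+e\cos(\theta-\omega))\bigr]$ with eccentricity $e=\sqrt{1+2EL^2/M^2}>1$. Requiring the orbit to pass through the prescribed non-antipodal endpoints $p_0,p_1$ yields a system of two equations in the two unknowns $(L,\omega)$. A continuity and winding-number argument, carried out in detail in \cite{bolotin2000periodic}, shows that for any $E>0$ there are exactly two solutions --- corresponding to the two homotopy classes in the punctured plane $\R^2\setminus\{0\}$ --- and the \tnt\ requirement selects the winding branch. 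In the limit case where $p_0, p_1$ lie on the same ray from the origin, the \tnt\ arc becomes the collision-ejection trajectory, which is well-defined in the Levi-Civita regularized sense.

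\textbf{Length derivation.} For the length formula, the plan is to use the two integrals of motion. Energy conservation gives $\|z_I'\|=\sqrt{2(E+M/r)}$, whence
\[
L_I(z_I)=\int_0^T\|z_I'(s)\|\sqrt{E+M/r(s)}\,ds=\sqrt{2}\int_0^T\!\Bigl(E+\tfrac{M}{r(s)}\Bigr)\,ds,
\]
and conservation of angular momentum $r^2\dot\theta=L$ converts this time integral into one over the true anomaly,
\[
L_I(z_I)=\frac{\sqrt{2}}{L}\int_{\theta_0}^{\theta_1}\!\bigl(E\,r^2(\theta)+M\,r(\theta)\bigr)\,d\theta.
\]
Substituting the explicit conic formula for $r(\theta)$ reduces both contributions to standard trigonometric integrals that admit closed-form primitives as smooth functions of $(L,\omega,e)$ and of the boundary values $(\theta_0,\theta_1)$, which in turn depend smoothly on $(p_0,p_1,E)$ via the system of the first step.

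\textbf{Asymptotic extraction.} The main obstacle is the uniform extraction of the leading and logarithmic terms as $E\to\infty$. In that regime the \tnt\ orbit shadows the broken segment $p_0\to 0\to p_1$: the pericenter distance collapses like $M/(2E)$, the eccentricity grows like $c(p_0,p_1)\sqrt{E}$, and the angular momentum decays like $L\sim C(p_0,p_1)/\sqrt{E}$. Inserting these scalings into the formula above, the $Er^2$-contribution produces at leading order the Jacobi length $\sqrt{E}\,(\|p_0\|+\|p_1\|)$ of the limit broken segment, while the $Mr$-contribution is concentrated near the pericenter and, through an integral of the type $\int d\theta/(1+e\cos(\theta-\omega))$, generates a logarithm of the eccentricity; combined with the prefactor $M/L$ this delivers the stated correction $-\tfrac{M}{\sqrt{E}}\log(M/(2E))$. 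All remaining pieces assemble into the error term $F(p_0,p_1;E)$, and its $C^2$-boundedness uniformly in the endpoints as $E\to\infty$ follows from the smooth dependence of $(L,\omega,e)$ on $(p_0,p_1,1/\sqrt{E})$ on every compact set of non-antipodal configurations, together with the fact that every remainder integral is analytic in $1/\sqrt E$ near $0$.
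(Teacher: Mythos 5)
Your route---polar conics, the vis-viva and angular-momentum integrals, and an asymptotic expansion of the true-anomaly integral---is genuinely different from the paper's, which instead performs the whole computation after a Levi-Civita regularization, writing the arc as $w(\tau)=Ae^{\tau}+Be^{-\tau}$ for the rectilinear repulsive oscillator at energy $E'=M/(2E)$ and reading off the expansion from explicit functions of $w_0,w_1,E'$ that are analytic at $E'=0$ precisely because $w_0\cdot w_1<0$ (this is where non-antipodality enters). Your approach is classical and in principle viable, but as written it contains a concrete error in the asymptotics. The scalings you assert are mutually inconsistent: since $e^2=1+2EL^2/M^2$, the claims $L\sim C(p_0,p_1)/\sqrt{E}$ and $e\sim c(p_0,p_1)\sqrt{E}$ cannot both hold. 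For the \tnt branch the correct picture is that $e$ stays \emph{bounded} (it converges to a finite limit determined by the angular separation of $p_0$ and $p_1$, since the swept anomaly must exceed $\pi$ and is capped by $2\arccos(-1/e)$), while $L\sim C/\sqrt E$. Consequently the logarithm is not ``a logarithm of the eccentricity'': it arises because the endpoint anomalies approach the asymptotic values, i.e. $1+e\cos f_i=L^2/(M\|p_i\|)=O(1/E)$, so that $\int df/(1+e\cos f)$ diverges like $\log E$; only after multiplying by the prefactor $\sqrt2\,L/\sqrt{e^2-1}=M/\sqrt{E}$ does one recover $\tfrac{M}{\sqrt E}\log(2E/M)$. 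The computation can be repaired along these lines, but the mechanism you describe would not produce the stated term.

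The second, more structural gap concerns the uniform $C^2$-bound on $F$, which is the actual content of the lemma. As $E\to\infty$ the \tnt arc degenerates to the collision--ejection broken line, $L\to0$, and the polar/true-anomaly parametrization becomes singular; ``smooth dependence of $(L,\omega,e)$ on $(p_0,p_1,1/\sqrt E)$'' at $1/\sqrt E=0$ is therefore not automatic and is exactly the nondegeneracy that must be proved. (Likewise, the regularized case where the arc is genuinely collisional has $L=0$ identically and lies outside the focal-conic framework altogether.) The paper's Levi-Civita substitution is not a cosmetic choice: it desingularizes this limit, turning all the relevant quantities into functions that are manifestly analytic in $E'=M/(2E)$ on a right neighborhood of $0$, uniformly on compact sets of non-antipodal endpoints. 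If you want to stay in polar coordinates you must supply an equivalent uniform nondegeneracy argument for the two-point boundary system at the collision limit; without it the final ``analytic in $1/\sqrt E$ near $0$'' assertion is unjustified.
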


\begin{proof}
	The existence and uniqueness of $z_I$ is a classical result in Celestia Mechanics (see for instance \cite[p. 274]{battin}). Let us then consider the Jacobi length of $z_I$
	\begin{equation*}
		L_I(z_I)=\int_0^T\|z_I'(s)\|\sqrt{E+\frac{M}{\|z_I(s)\|}}ds=\sqrt{E}\int_0^T\|z'_I(s)\|\sqrt{1+\frac{M}{E}\frac{1}{\|z_I(s)\|}}ds, 
	\end{equation*}
	and the equivalent Kepler problem with energy equal to $1$ and mass $M/E$.	
	Using Levi-Civita regularization, and in particular considering the transformation (in complex notation) 
	\begin{equation*}
		\frac{d}{ds}=\frac{1}{\sqrt{2}\|z\left(s(\tau)\right)\|}\frac{d}{d\tau}, \qquad 
		w^2(\tau) = z(s(\tau))
	\end{equation*}
	one arrives to the harmonic repulsor-system for a suitable $\tilde T$ 
	\begin{equation}\label{eq:prob_LC}
		\begin{cases}
			w''(\tau)=w(\tau)&\quad \tau\in[0, \tilde T]\\
			\frac{1}{2}\|w'(\tau)\|^2-\frac12\|w(\tau)\|^2=E' &\quad\tau\in[0, \tilde T]\\
			w(0)=w_0,\ w(\tilde T)=w_1
		\end{cases}
	\end{equation}
	where $E'=M/2E$ and $w_0,w_1\in\mathbb C$ are such that $w_0^2=p_0$ and $w_1^2=p_1$. Since the complex square determines a double covering over $\C$, we have two possible choices for every $w_i$, $i=0,1$. To obtain solution corresponding to the \tnt arc in the physical Kepler problem, we need to take $w_0=\sqrt{p_0}$ and $w_1=-\sqrt{p_1}$ (or, equivalently, $w_0=-\sqrt{p_0}$ and $w_1=\sqrt{p_1}$); with some geometric remark we verify that, with this choice, $w_0\cdot w_1<0$. 
	Back to $\R^2$, the solution of \eqref{eq:prob_LC} is
	\begin{equation*}
		w(\tau)=Ae^\tau+B e^{-\tau}, 
	\end{equation*}
	where the vectors $A,B\in\R^2$ can be expressed in terms of $w_0,w_1$ and $\tilde T$ as
	\begin{equation*}
		A=\frac{w_1-w_0 e^{-\tilde T}}{\vertspace e^{\tilde T}-e^{-\tilde T}}, \quad B=\frac{w_0 e^{\tilde T}-w_1}{\vertspace e^{\tilde T}-e^{-\tilde T}}, 
	\end{equation*}
	and the energy conservation law gives $E'=-2 A\cdot B$. 

Computing the Jacobi length of $z_I$,  one has then that (see \cite{deblasiterraciniellissi} for more explicit computations)
\begin{equation*}
	\begin{aligned}
		L_I(z_I)&=\sqrt{E}\int_0^T\|z'_I(s)\|\sqrt{1+\frac{M}{E}\frac{1}{\|z_I(s)\|}}ds=2\sqrt{E}\int_0^{\tilde T}\|w'(\tau)\|\sqrt{2 E'+\|w(\tau)\|^2}d\tau\\
		&=2\sqrt{E}\left[\frac12\frac{e^{\tilde T}+e^{-\tilde T}}{e^{\tilde T}-e^{-\tilde T}}(\|w_0\|^2+\|w_1\|^2)-\frac{2 w_0\cdot w_1}{e^{\tilde T}-e^{-\tilde T}}+\tilde T E'\right]. 
	\end{aligned}
\end{equation*}

It is then necessary to express all the quantities depending on $\tilde T$ in terms of $w_0$ and $w_1$. Let us start by computing 
\begin{equation*}
	y(w_0,w_1; E')\eqdef\frac{e^{\tilde T}+e^{-\tilde T}}{2}=-\frac{w_0\cdot w_1}{2E'}+\sqrt{\frac{1}{2E'}(\|w_0\|^2+\|w_1\|^2)+\left(\frac{w_0\cdot w_1}{2E'}\right)^2+1}; 
\end{equation*}
since $w_0\cdot w_1<0$, the function $1/y$ is analytic on a right neighborhood of $E'=0$ (which indeed corresponds to $E \to +\infty$). Using the expression of $y$, one can then find 
\begin{equation}\label{eq:g1}
	\begin{aligned}
		\tilde T&=\log \left(y+\sqrt{y^2-1}\right)\\
		&=\log\left(\frac{|w_0\cdot w_1|}{2}\right)-\log E'+\log\left(1+\sqrt{1+2E'\frac{\|w_0\|^2+\|w_1\|^2}{(w_0\cdot w_1)^2}+\left(\frac{2 E'}{w_0\cdot w_1}\right)^2}\right)\\
		&\ +\log\left(1+\sqrt{1-\frac{1}{y^2}}\right)\\
		&=-\log E'+g_1(w_0,w_1; E'), 
	\end{aligned}
\end{equation}
where $g_1$ is an analytic function.\\
Furthermore,  we have that 
\begin{equation}\label{eq:g2}
	\begin{aligned}
		\frac{2}{\vertspace e^{\tilde T}-e^{-\tilde T}}&=\frac{1}{\sqrt{y^2-1}}	\\
		&= \frac{\sqrt{2}}{|w_0\cdot w_1|}E'\left(1+\frac{E'(\|w_0\|^2+\|w_1\|^2)}{(w_0\cdot w_1)^2}+\sqrt{1+\frac{2 E'(\|w_0\|^2+\|w_1\|^2)}{(w_0\cdot w_1)^2}+\frac{4{E'}^2}{(w_0\cdot w_1)^2}}\right)^{-1/2}\\
		&= E' g_2(w_0,w_1; E'), 
	\end{aligned}
\end{equation} 
where again $g_2$ is analytic. As for the coefficient of $(\|w_0\|^2+\|w_1\|^2)$ one has that 
\begin{equation*}
	\frac{e^{\tilde T}+e^{-\tilde T}}{e^{\tilde T}-e^{-\tilde T}}=\frac{1}{\sqrt{1-\frac{1}{y^2}}}; 
\end{equation*}
this term is then analytic and its Taylor expansion around $E'=0$ is of the form 
\begin{equation}\label{eq:g3}
	\frac{e^{\tilde T}+e^{-\tilde T}}{e^{\tilde T}-e^{-\tilde T}}=1+E' g_3(w_0,w_1; E'). 
\end{equation}
Taking together Eqs. \eqref{eq:g1}, \eqref{eq:g2} and \eqref{eq:g3}, and recalling that $E'=M/2E$, one obtains
\begin{equation*}
	\begin{aligned}
		L_I(z_I)&=2\sqrt{E}\left(\frac12\left(\|w_0\|^2+\|w_1\|^2\right)+E' G(w_0,w_1; E')-E'\log E'\right)\\  &=\sqrt{E}\left(\|p_0\|+\|p_1\|\right)+\frac{M}{\sqrt{E}}\left(F(p_0, p_1; E)-\log\left(\frac{M}{2E}\right)\right), 
	\end{aligned}
\end{equation*}
where $F$ is $C^2$-bounded with respect to $p_0, p_1$ as $E\to\infty$. 
\end{proof}

\section*{Data availability statement}
This manuscript contains only theoretical elaborations. No data are associated with this research.

\bibliography{vivinabibliog}
\bibliographystyle{abbrv}
\end{document}